\documentclass[a4paper]{amsart}

\usepackage{amssymb}
\usepackage{amsthm}
\usepackage{mathtools}
\usepackage{mathrsfs} 
\usepackage[normalem]{ulem} 
\usepackage{float} 
\usepackage{enumerate} 
\usepackage{wasysym} 
\usepackage{enumitem}
\usepackage{multirow} 
\usepackage{hyperref} 
\usepackage{xparse} 
\usepackage{xargs} 
\usepackage{todonotes} 
\usepackage[margin=24mm, a4paper]{geometry}
\usepackage[foot]{amsaddr}


\newcommand\definition[1]{{\textit{#1}}}
\usepackage{aliascnt}

\newtheorem{thm}{Theorem}[section]
\newtheorem{theorem}[thm]{Theorem}
\def\thmautorefname~#1\null{Theorem~#1\null}

\newaliascnt{lem}{thm}
\newtheorem{lemma}[lem]{Lemma}
\aliascntresetthe{lem}
\def\lemautorefname~#1\null{Lemma~#1\null}

\newaliascnt{prop}{thm}
\newtheorem{proposition}[prop]{Proposition}
\aliascntresetthe{prop}
\def\propautorefname~#1\null{Proposition~#1\null}

\newaliascnt{cor}{thm}
\newtheorem{corollary}[cor]{Corollary}
\aliascntresetthe{cor}
\def\corautorefname~#1\null{Corollary~#1\null}

\newaliascnt{conj}{thm}
\newtheorem{conjecture}[conj]{Conjecture}
\aliascntresetthe{conj}
\def\conjautorefname~#1\null{Conjecture~#1\null}

\theoremstyle{remark}
\newaliascnt{ex}{thm}
\newtheorem{example}[ex]{Example}

\def\exautorefname~#1\null{Example~#1\null}

\newtheorem{remark}[thm]{Remark}

\def\equationautorefname~#1\null{\text{(#1)\null}}
\def\figureautorefname~#1\null{Figure~#1\null}

\newcommand{\ie}{\textit{i.e.},~} 
\newcommand{\st}{:}
\newcommand{\bs}{\backslash}
\newcommand{\inv}{{-1}}
\renewcommand{\b}[1]{\mathbf{#1}} 
\newcommand{\closure}[1]{\overline{#1}}
\newcommand{\order}[1]{\left|#1\right|}

\newcommand{\set}[2]{\left\{ #1 \st #2 \right\}} 
\newcommand{\gen}[1]{\left< #1 \right>}
\newcommand{\genset}[2]{\gen{#1 \st #2}}
\renewcommand{\to}{\rightarrow} 

\newcommand{\Q}{\mathbb{Q}} 
\newcommand{\N}{\mathbb{N}} 
\newcommand{\mcA}{\mathcal{A}}
\newcommand{\mcP}{\mathcal{P}}
\newcommand{\msE}{\mathscr{E}}

\newcommand{\meet} {\wedge}
\newcommand{\bigmeet} {\bigwedge}
\newcommand{\join} {\vee}
\newcommand{\bigjoin} {\bigvee}
\newcommand{\precdot}{{\;<\!\!\!\!\cdot\;\,}} 
\newcommand{\cover}{\precdot\!\!}

\newcommand{\leqR}{\leq_{R}}
\newcommand{\geqR}{\geq_{R}}

\newcommand{\coverR}{\cover_{R}}
\newcommand{\joinR}{\join_{R}}
\newcommand{\meetR}{\meet_{R}}
\newcommand{\leqB}{\leq_{B}}

\newcommand{\longEl}{w_\circ}
\newcommand{\longElI}[1]{w_{\circ,#1}}

\DeclareMathOperator{\cl}{cl}

\DeclareMathOperator{\supp}{supp} 

\newcommand{\bclosure}[1]{\closure{#1}}

\newcommand{\bbclosure}[1]{\bnclosure{#1}{\infty}}
\newcommand{\bnclosure}[2]{\bclosure{#1}^{#2}}
\newcommand{\cclosure}[1]{\closure{#1}}
\newcommand{\cnclosure}[2]{\cclosure{#1}^{#2}}

\newcommand{\leslant}{\prec}
\renewcommand{\leqslant}{\preccurlyeq}
\newcommand{\initSects}{\mcA}
\newcommand{\twistedInv}[2]{#2 \cdot #1}
\newcommand{\symdiff}{\ominus}

\newcommand{\leqA}{\leq_A}

\makeatletter
\newtheorem*{rep@theorem}{\rep@title}
\newcommand{\newreptheorem}[2]{%
\newenvironment{rep#1}[1]{%
    \def\rep@title{\autoref{##1}}%
    \begin{rep@theorem}}%
    {\end{rep@theorem}}}
\makeatother

\newreptheorem{theorem}{Theorem}

\newcommand\re[1]{\color{red}{\b{#1}}}

\author{Aram Dermenjian}
\address{Universidad de Sevilla}
\email{aram.dermenjian.math@gmail.com}
\title{Bruhat Preclosure}

\subjclass[2020]{20F55}

\begin{document}

\begin{abstract}
    In 2011, Dyer published a series of conjectures on the weak order of Coxeter groups.
    One of these conjectures stated that the inversion set of the join of two elements in a Coxeter group is equal to some ``closure'' of the union of their inversion sets.
    In this paper we show that this ``closure'' is in fact a preclosure, which we call the Bruhat preclosure, but \emph{is} a closure whenever our underlying set is an inversion set.
    By performing the Bruhat preclosure an infinite number of times we obtain a closure which we call the infinite Bruhat closure.
    We show in a uniform way that Dyer's conjecture is true when using the infinite Bruhat closure (instead of Bruhat preclosure) if the join exists between two elements.
    Finally, we end by showing in type $A$, the Bruhat preclosure is a closure thus giving a (second) proof that  Dyer's conjecture is true in type $A$.
\end{abstract}

\maketitle

\section{Introduction}
For an arbitrary Coxeter system $(W, S)$, there are two well-studied combinatorial structures on the elements of $W$: the weak order and the Bruhat graph.
For two elements $u$ and $v$ in $W$ the (right) weak order is given by $u \leqR v$ if and only if some reduced expression of $u$ is a \emph{prefix} of some reduced expression of $v$.
Alternatively, the weak order can be described through inversion sets $N(w)$ for $w \in W$ where $u \leqR v$ if and only if $N(u) \subseteq N(v)$.
The weak order is a meet-semilattice in general and is a lattice when $W$ is finite.
Letting $T$ be the set of (all) reflections, the Bruhat graph $\Omega$ is the labelled directed graph with vertex set $W$ and (labelled) directed edges $u \xrightarrow{t} v$ if there is some $t \in T$ such that $v = ut$ with $\ell(u) < \ell(v)$.
In \cite{Dyer_OnTheWeakOrderOfCoxeterGroups}, Dyer gives a conjectural relationship between the weak order and the Bruhat graph given in terms of an operator which we call the (right) Bruhat preclosure.
For a subset $A$ of $T$, let $\Omega_A$ be the subgraph of the Bruhat graph $\Omega$ restricted to the edges labelled by elements in $A$.
The \definition{(right) Bruhat preclosure} of $A$ is then the set of reflections $\bclosure{A} \subseteq T$ which can be reached in the subgraph $\Omega_A$ starting from the identity element of $W$.

Initially, the Bruhat preclosure was thought to be a closure, but \autoref{ex:H3_fail} shows a counterexample to the closure property.
Restricting to initial sections (a generalization of inversion sets defined in \autoref{ssec:initial-sections}) the Bruhat preclosure is a closure.
In fact, we prove something much stronger, showing that the closure of an initial section is itself.
\begin{reptheorem}{thm:Bruhat_is_closure}
    For $A$ an initial section, $\bclosure{A} = A$.
\end{reptheorem}
To rectify the fact that the Bruhat preclosure is not a closure, we extend the Bruhat preclosure into a closure by taking its infinite union $\bbclosure{A} = \cup_{n \in \N} \bnclosure{A}{n}$ where $\bnclosure{A}{n}$ represents the Bruhat preclosure being performed $n$ times.
We call this closure the \emph{infinite Bruhat closure}.
It turns out that the infinite Bruhat closure is a closure as it is of \emph{finite type} (defined in \autoref{sec:preclosure-extensions}).

The conjecture of Dyer \cite[Conjecture 2.8]{Dyer_OnTheWeakOrderOfCoxeterGroups} states that the Bruhat preclosure gives us information on the weak order.
In particular, Dyer's conjecture states that the Bruhat preclosure of the union of the inversion sets of two elements $u, v \in W$ is equal to the inversion set of their join: $N(u \joinR v) = \bclosure{N(u) \cup N(v)}$.
This article gives a positive result on a weaker form of Dyer's conjecture using the infinite Bruhat closure.
\begin{reptheorem}{thm:main}
    For $u$ and $v$ in $W$ such that $u \joinR v$ exists then 
    \[
        N(u \joinR v) = \bbclosure{N(u) \cup N(v)}.
    \]
\end{reptheorem}
In type $A$, it was shown by Biagioli and Perrone in \cite{Biagioli2025} that this can be strengthened, giving a positive result of Dyer's conjecture.
Furthermore, Biagioli and Perrone have mentioned \cite{Biagioli_Communication} that they also have a proof in type $B$ which is yet to be published.
In \autoref{sec:type-a-case} we show that the Bruhat preclosure is in fact a closure for type $A$, thus giving a second proof of Dyer's conjecture in this type.
Through many tests using Sage, we believe that Dyer's conjecture should be true for the Bruhat preclosure in general.
We discuss this in more detail in \autoref{sec:main-results}.

This article is organized in the following way.
We begin with some background details on the weak order, the Bruhat graph and reflection subgroups of Coxeter groups in \autoref{sec:the-weak-order}.
Then, we introduce our main object of study, the Bruhat preclosure, in \autoref{sec:bruhat-preclosure}.
We give examples of this preclosure and show that it is not a closure.
In \autoref{sec:twisted-bruhat-orders} we discuss twisted Bruhat graphs which are graphs used to understand the Bruhat preclosure. 
We study various properties on twisted Bruhat graphs and finish the section by proving \autoref{thm:Bruhat_is_closure}, that the closure of an initial section is itself.
Then in \autoref{sec:preclosure-extensions} we define preclosures being of finite type and show that preclosures of finite type extend naturally to a closure.
In particular, we show that the Bruhat preclosure is of finite type, thus allowing us to extend it to the infinite Bruhat closure.
In \autoref{sec:main-results} we prove the main result of this paper, \autoref{thm:main}, showing true a weaker form of Dyer's conjecture.
We end this paper by showing in \autoref{sec:type-a-case} that in type $A$ the Bruhat preclosure is a closure thus giving a second proof that Dyer's conjecture is true for type $A$.

\textbf{Acknowledgements}
The author would like to thank Matthew Dyer, Christophe Hohlweg, and Vincent Pilaud for multiple thoughtful and fruitful conversations on this topic.

\section{The weak order and the Bruhat graph}
\label{sec:the-weak-order}
In this section we give background and notation for the rest of the article.
Those with knowledge of the weak order and the Bruhat graph can safely skip this section.
For a more thorough background on Coxeter groups we direct the reader to the book by Bj\"orner and Brenti \cite{Bjorner_CombinatoricsofCoxeterGroups} and the book by Humphreys \cite{Humphreys_ReflectionGroupsandCoxeterGroups}.
For a background on posets we direct the reader to the book by Birkhoff \cite{Birkhoff_LatticeTheory}.

\subsection{Coxeter groups}
\label{ssec:coxeter-groups}
A \definition{Coxeter system} is a pair $(W,S)$ where $S$ is a finite set of \definition{simple reflections} and $W = \genset{S}{(st)^{m_{st}} = e\text{, }s,t\in S}$ is the \definition{Coxeter group} generated by $S$ such that $m_{st} \in \N \cup \left\{ \infty \right\}$ and where $m_{st} = 1$ if and only if $s = t$.
A Coxeter system is said to be \definition{finite} if $W$ is finite.
Furthermore, the set of \definition{reflections} is given by $T = \cup_{w \in W} wSw^{\inv}$.
For finite Coxeter groups there is a unique element of maximal length which we denote by $\longEl$ called the \definition{long element}.
Given a subset $I$ of $S$, let $W_I = \gen{I}$ be the \definition{(standard) parabolic subgroup} of $W$ associated to $I$.
If $W_I$ is finite, let $\longElI{I}$ denote its long element.


For an element $w \in W$ the \definition{length $\ell(w)$ of $w$} is the minimal length of all words representing $w$ as a product of the simple reflections $S$; in other words ${\ell(w) = \min\set{k}{w = s_1 s_2\ldots s_k,\, s_i \in S}}$.
If $w = s_1 s_2 \ldots s_n$ and $\ell(w) = n$ then we say $s_1 s_2 \ldots s_n$ is a \definition{reduced expression} or \definition{reduced word} for $w$.
The \definition{support} of a word is the minimal alphabet needed to produce the word: $\supp(s_1\ldots s_n) = \set{s_i}{i \in [n]}$.
It is well-known that the support of reduced expressions of elements of a Coxeter group all coincide.
Thus, we can define the \definition{support of an element} $\supp(w)$ to be the support of any of its reduced expressions.

Let $\mcP(T)$ denote the power set of $T$ and define the map $N : W \to \mcP(T)$ by $N(w) = \set{t \in T}{\ell(tw)< \ell(w)}$.
The set $N(w)$ is called the \definition{inversion set} of $w$.
It is known that $\order{N(w)} = \ell(w)$ and that inversion sets are \definition{reflection cocycles} (see \cite{Dyer_ReflectionSubgroupsOfCoxeterSystems}), \ie they satisfy the following two properties:
\begin{enumerate}
    \item $N(uv) = N(u) \symdiff uN(v)u^{\inv}$ for $u,v \in W$ and
    \item $N(s) = \left\{ s \right\}$ for $s \in S$
\end{enumerate}
where $\symdiff$ denotes the symmetric difference.

A \definition{reflection subgroup $W'$} is a subgroup of $W$ such that $W' = \gen{W' \cap T}$.
Letting the set of generators of $W'$ be denoted by $\chi(W') = \set{t \in T}{N(t) \cap W' = \left\{ t \right\}}$, then it is known that $(W', \chi(W'))$ is a Coxeter system, see for instance \cite[Theorem 3.3]{Dyer_ReflectionSubgroupsOfCoxeterSystems}.
If $\order{\chi(W')} = 2$ then $W'$ is said to be a \definition{dihedral reflection subgroup} of $W$.

\subsection{Posets}
A \definition{poset} $(P,\leqslant)$ is a set $P$ together with a partial order $\leqslant$.
Given two elements $x, y \in P$ the least upper bound of $x$ and $y$ (if it exists) is called the \definition{join} of $x$ and $y$ and is denoted $x \join y$.
The greatest lower bound of $x$ and $y$ (if it exists) is called the \definition{meet} and is denoted $x \meet y$.
If every pair of elements has a join then the poset is called a \definition{join-semilattice}.
Similarly, a \definition{meet-semilattice} is a poset where every two elements have a meet.
If a poset is a join-semilattice and a meet-semilattice, \ie every pair of elements has a join and a meet, then it is called a \definition{lattice}.

By abuse of notation, for $X = \left\{ x_1, x_2, \ldots, x_n \right\} \subseteq P$ we let $\bigjoin X$ denote $x_1 \join x_2 \join \cdots \join x_n$ and similarly $\bigmeet X = x_1 \meet x_2 \meet \cdots \meet x_n$.
We say $X$ is \definition{bounded above} by $y \in P$ if for all $x_i \in X$, $x_i \leqslant y$.
We define \definition{bounded below} similarly.

\subsection{Weak order and Bruhat graph}
The \definition{(right) weak order} is the partial order $\leqR$ on $W$ such that $u \leqR v$ if and only if $\ell(v) = \ell(u) + \ell(u^{\inv}v)$.
It is well-known that $u \leqR v$ if and only if $N(u) \subseteq N(v)$.
Additionally, $(W, \leqR)$ is a meet-semilattice (see \cite[Section 3.2]{Bjorner_CombinatoricsofCoxeterGroups}) and a lattice when $W$ is finite.

The \definition{Bruhat graph} is the edge-labelled directed graph $\Omega = (W, E)$ with vertex set $W$ and directed edges $E = \set{u \xrightarrow{t} v}{\ell(u) < \ell(v) \text{ and } v = ut \text{ for }\ t \in T}$.
The \definition{Bruhat order} $\leqB$ is then the partial order on $W$ such that $u \leqB v$ if and only if there exists a chain of directed edges in the Bruhat graph $(x_0, x_1), (x_1, x_2), \ldots, (x_{n-1},x_n)$ such that $x_0 = u$ and $x_n = v$.
Note that the Bruhat order is not a lattice in general.
Furthermore, $u \leqR v$ implies $u \leqB v$, but the reverse is not necessarily true.

\begin{example}
    \label{ex:A2-normal}
    Let $W$ by a type $A_2$ Coxeter group with $S = \{s, t\}$.
    Then the Bruhat graph of $W$ is the following graph:
    \begin{center}
        \begin{tikzpicture}
            [vertex/.style={inner sep=1pt,draw=black, circle, fill=black, thick},
                arr/.style={shorten >=4pt,shorten <=4pt,->,thick}
            ]
                \coordinate (e) at (0,0);
                \coordinate (s) at (-1,1);
                \coordinate (t) at (1,1);
                \coordinate (st) at (-1,2.5);
                \coordinate (ts) at (1,2.5);
                \coordinate (sts) at (0,3.5);

                \node[vertex] at (e) {};
                \node[vertex] at (s) {};
                \node[vertex] at (t) {};
                \node[vertex] at (st) {};
                \node[vertex] at (ts) {};
                \node[vertex] at (sts) {};

                \node[below] at (e) {$e$};
                \node[left] at (s) {$s$};
                \node[right] at (t) {$t$};
                \node[left] at (st) {$st$};
                \node[right] at (ts) {$ts$};
                \node[above] at (sts) {$sts$};

                \draw[arr, red] (e) -- (s);
                \draw[arr, blue] (e) -- (t);
                \draw[arr] (e) -- (sts);

                \draw[arr, blue] (s) -- (st);
                \draw[arr] (s) -- (ts);
                \draw[arr, red] (t) -- (ts);
                \draw[arr] (t) -- (st);

                \draw[arr, red] (st) -- (sts);
                \draw[arr, blue] (ts) -- (sts);

                \node[red] at (-0.7, 0.3) {$s$};
                \node[red] at (1.2, 1.7) {$s$};
                \node[red] at (-0.7, 3.1) {$s$};

                \node[blue] at (0.7, 0.3) {$t$};
                \node[blue] at (-1.2, 1.7) {$t$};
                \node[blue] at (0.7, 3.1) {$t$};

                \node at (0.4, 1.76) {$sts$};
        \end{tikzpicture}
    \end{center}
    Note that the labels for the edges show right multiplication.
    The arrows in red represent multiplying on the right by $s$, the arrows in blue represent multiplying on the right by $t$ and the black arrows in the middle represent multiplying on the right by $sts$.
\end{example}

\section{Bruhat Preclosure}
\label{sec:bruhat-preclosure}
In this section we define the main operator of this paper: the Bruhat preclosure.
Recall that given a set $Z$ a \definition{preclosure operator} is a map $cl : \mcP(Z) \to \mcP(Z)$ such that the following properties hold for $X, Y \subseteq Z$:
\begin{enumerate}
    \item $X \subseteq cl(X)$ and
    \item $X \subseteq Y$ implies $cl(X) \subseteq cl(Y)$
\end{enumerate}
If furthermore $cl(X) = cl(cl(X))$, then $cl$ is called a \definition{closure operator}.

Recall that the Bruhat graph $\Omega = (W,E)$ is the edge-labelled directed graph whose labels are given by multiplication on the right by reflections.
For $A \subseteq T$, a \definition{right $A$-path} is a path in $\Omega$ traversing only edges labelled by elements in $A$.
Alternatively, letting $\Omega_A$ be the subgraph of $\Omega$ restricted to edges labelled by elements in $A$, then a right $A$-path is a path in $\Omega_A$.
A left $A$-path can be similarly defined if the edges in $\Omega$ are labelled by left reflections instead of right.
In this article we exclusively use right reflections as labels and thus will refer to a right $A$-path simply as an \definition{$A$-path}.
\begin{example}
    Let $W$ be a type $A_2$ Coxeter group with simple reflections $S = \left\{ s,t \right\}$ and reflections $T = \left\{ s,t,sts \right\}$.
    Letting $A = \left\{ s, {sts} \right\} \subseteq T$ then the following path in $\Omega$ is an $A$-path
    \[
        e \xrightarrow{s} s \xrightarrow{sts} ts,
    \]
    but the following two paths are not
    \[
        e \xrightarrow{t} t
        \quad\text{and}\quad
        e \xrightarrow{sts} sts \xrightarrow{s} st.
    \]
    The first one is not an $A$-path since $t \notin A$ and the second is not an $A$-path since $sts \xrightarrow{s} st$ is not an edge in the Bruhat graph as $\ell(sts) > \ell(st)$.
\end{example}

For $A \subseteq T$ the \definition{(right) Bruhat preclosure} is the preclosure given by
\[
    \bclosure{A} = \set{t \in T}{\text{there exists a right $A$-path from $e$ to $t$}}.
\]
The left Bruhat preclosure is defined similarly.
As before we will say \definition{Bruhat preclosure} to mean right Bruhat preclosure.

It is readily seen by the construction of the Bruhat preclosure that it is in fact a preclosure.
We give two examples of the Bruhat preclosure.

\begin{example}\label{ex:A3_ex}
    We first look at an example of the Bruhat preclosure for a Coxeter group of type $A_3$ with presentation
    \[
        W = \genset{r,s,t}{r^2 = s^2 = t^2 = (rs)^3 = (st)^3 = (rt)^2 = e}.
    \]
    Let $A = N(rts) = \left\{ r, rstsr, t \right\}$.
    The following graph shows all $A$-paths which start at the identity.
    The edges in this graph are directed upward and are labelled with right multiplication.
    \begin{center}
        \begin{tikzpicture}
            [vertex/.style={draw=black, anchor=base, rectangle, fill=white, minimum width=1cm, minimum height=0.5cm},
                vertexRef/.style={draw=blue!95!black, anchor=base, rectangle, fill=blue!15!white, minimum width=1cm, minimum height=0.5cm, ultra thick}
            ]

            \coordinate (e) at (0,0);
            \coordinate (r) at (-1,1);
            \coordinate (t) at (1,1);
            \coordinate (rt) at (0,2);
            \coordinate (str) at (-1,4);
            \coordinate (stsr) at (-3,4);
            \coordinate (rstr) at (3,4); 
            \coordinate (rstsr) at (1,4); 
            
            \draw[->] (e) -- (r.south) node [midway, left] {$r$};
            \draw[->] (e) -- (t) node [midway, right] {$t$};
            \draw[->] (e) -- (rstsr) node [midway, above right] {$rstsr$};
            \draw[->] (r) -- (rt) node [midway, left] {$t$};
            \draw[->] (t) -- (rt) node [midway, right] {$r$};
            \draw[->] (r) -- (stsr) node [midway,left] {$rstsr$};
            \draw[->] (t) -- (rstr) node [midway,right] {$rstsr$};
            \draw[->] (rt) -- (str) node [midway,left] {$rstsr$};

            \node[vertex] at (e) {$e$};
            \node[vertexRef] at (r) {$r$};
            \node[vertexRef] at (t) {$t$};
            \node[vertex] at (rt) {$rt$};
            \node[vertex] at (str) {$str$};
            \node[vertex] at (stsr) {$stsr$};
            \node[vertex] at (rstr) {$rstr$};
            \node[vertexRef] at (rstsr) {$rstsr$};

        \end{tikzpicture}
    \end{center}
    The reflections have been marked in blue with their border thickened.
    This shows $\bclosure{A} = \left\{ r,rstsr, t \right\} = A$.
    The reflections reached by traversing this subgraph are exactly the same reflections we started with.
    As we started with an inversion set, this leads us to conjecture that the Bruhat preclosure of an inversion set is the inversion set itself.
    In fact, this turns out to be true!
    This fact is shown in \autoref{thm:Bruhat_is_closure}.
    Although we'd like this to be true for an arbitrary $A \subseteq T$, we have the following counter-example.
\end{example}
\begin{example}\label{ex:H3_fail}
    Let $W$ be a type $H_3$ Coxeter group with $S = \left\{ r,s,t \right\}$ such that $(rs)^5 = (st)^3 = (rt)^2 = e$.
    We will give an $A \subseteq T$ such that $A \subsetneq \bclosure{A} \subsetneq \bclosure{\bclosure{A}}$ showing that the Bruhat preclosure is not a closure for arbitrary subsets.
    With that in mind, let 
    \[
        A = \left\{ r,\, srs,\, tsrsrst,\, strstrstrstrs\right\}.
    \]
    After tracing the Bruhat graph (see \autoref{fig:H3-fail}), one gets that
    \begin{align*}
        \bclosure{A} &= \left\{ r,\, srs,\, tsrsrst,\, strstrstrstrs,\, \mathbf{srsrs} \right\} \text{ and}\\
        \bclosure{\bclosure{A}} &= \left\{ r,\, srs,\, tsrsrst,\, strstrstrstrs,\, srsrs,\, \mathbf{rstrsrstrsr} \right\}
    \end{align*}
    where the new elements added are bolded.

    We see this in the (subgraph of the) Bruhat graph in \autoref{fig:H3-fail} where the reflections in $A$ have a thickened border in black, the additional element in $\bclosure{A}$ is in blue and the additional element in $\bclosure{\bclosure{A}}$ is in red.
    The edges labelled by elements in $A$ are black and the edges labelled by $srsrs$ are thickened, dashed and in blue.
    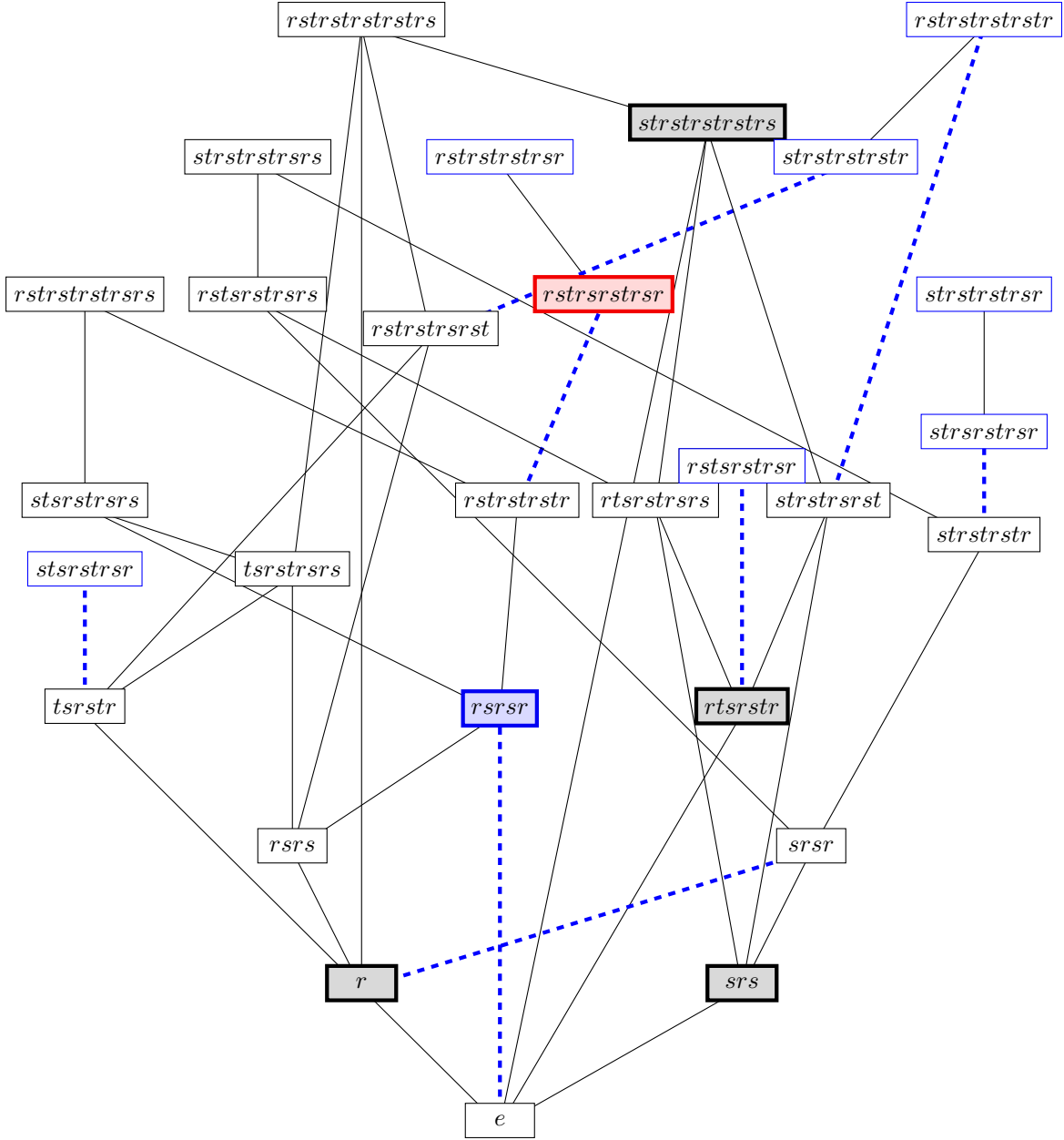
\begin{figure}
        \begin{tikzpicture}
    [vertex/.style={draw=black, anchor=base, rectangle, fill=white, minimum width=1cm, minimum height=0.5cm},
        vertexA/.style={draw=blue, anchor=base, rectangle, fill=white, minimum width=1cm, minimum height=0.5cm},
        vertexRefA/.style={draw=black, anchor=base, rectangle, fill=black!15!white, minimum width=1cm, minimum height=0.5cm, ultra thick},
        vertexRefB/.style={draw=blue!95!black, anchor=base, rectangle, fill=blue!15!white, minimum width=1cm, minimum height=0.5cm, ultra thick},
        vertexRefC/.style={draw=red!95!black, anchor=base, rectangle, fill=red!15!white, minimum width=1cm, minimum height=0.5cm, ultra thick},
        newA/.style={blue, ultra thick, dashed},
        yscale=2,
    ]

    \coordinate (e) at (0, 0);
    \coordinate (r) at (-2, 1);
    \coordinate (srs) at (3.5, 1);
    \coordinate (rsrs) at (-3, 2);
    \coordinate (srsr) at (4.5, 2);
    \coordinate (rsrsr) at (0, 3);
    \coordinate (tsrstr) at (-6, 3);
    \coordinate (rtsrstr) at (3.5, 3);
    \coordinate (stsrstrsr) at (-6,4);
    \coordinate (tsrstrsrs) at (-3,4);
    \coordinate (strstrstr) at (7,4.25);
    \coordinate (rstsrstrsr) at (3.5,4.75);
    \coordinate (rtsrstrsrs) at (2.25,4.5);
    \coordinate (strstrsrst) at (4.75,4.5);
    \coordinate (stsrstrsrs) at (-6,4.5);
    \coordinate (rstrstrstr) at (0.25,4.5);
    
    \coordinate (strsrstrsr) at (7,5);
    \coordinate (rstrsrstrsr) at (1.5, 6);
    \coordinate (rstrstrsrst) at (-1,5.75);
    \coordinate (rstsrstrsrs) at (-3.5,6);
    \coordinate (strstrstrsr) at (7,6);
    \coordinate (rstrstrstrsr) at (0,7);
    \coordinate (strstrstrsrs) at (-3.5,7);
    \coordinate (strstrstrstr) at (5,7);
    \coordinate (rstrstrstrsrs) at (-6,6);
    \coordinate (rstrstrstrstr) at (7,8);
    \coordinate (strstrstrstrs) at (3, 7.25);
    \coordinate (rstrstrstrstrs) at (-2,8);

    \draw (e) -- (r); 
    \draw (e) -- (srs); 
    \draw[newA] (e) -- (rsrsr); 
    \draw (e) -- (rtsrstr); 
    \draw (e) -- (strstrstrstrs); 

    \draw (r) -- (rsrs); 
    \draw (r) -- (rstrstrstrstrs); 
    \draw (r) -- (tsrstr); 
    \draw[newA] (r) -- (srsr); 

    \draw (srs) -- (srsr); 
    \draw (srs) -- (rtsrstrsrs); 
    \draw (srs) -- (strstrsrst); 

    \draw (rsrs) -- (rsrsr); 
    \draw (rsrs) -- (rstrstrsrst); 
    \draw (rsrs) -- (tsrstrsrs); 

    \draw (srsr) -- (rstsrstrsrs); 
    \draw (srsr) -- (strstrstr); 

    \draw (rsrsr) -- (stsrstrsrs); 
    \draw (rsrsr) -- (rstrstrstr); 

    \draw (rtsrstr) -- (strstrsrst); 
    \draw (rtsrstr) -- (rtsrstrsrs); 
    \draw[newA] (rtsrstr) -- (rstsrstrsr); 

    \draw (tsrstr) -- (rstrstrsrst); 
    \draw (tsrstr) -- (tsrstrsrs); 
    \draw[newA] (tsrstr) -- (stsrstrsr); 

    \draw (rstrstrstr) -- (rstrstrstrsrs); 
    \draw[newA] (rstrstrstr) -- (rstrsrstrsr); 

    \draw (rstrsrstrsr) -- (rstrstrstrsr); 
    \draw (stsrstrsrs) -- (rstrstrstrsrs); 
    \draw (strsrstrsr) -- (strstrstrsr); 
    \draw (rstsrstrsrs) -- (strstrstrsrs); 
    \draw (strstrstrstr) -- (rstrstrstrstr); 
    \draw (strstrstrstrs) -- (rstrstrstrstrs); 

    \draw (strstrstr) -- (strstrstrsrs); 
    \draw[newA] (strstrstr) -- (strsrstrsr); 

    \draw (rstrstrsrst) -- (rstrstrstrstrs); 
    \draw[newA] (rstrstrsrst) -- (strstrstrstr); 

    \draw (tsrstrsrs) -- (rstrstrstrstrs); 
    \draw (tsrstrsrs) -- (stsrstrsrs); 

    \draw (strstrsrst) -- (strstrstrstrs); 
    \draw[newA] (strstrsrst) -- (rstrstrstrstr); 

    \draw (rtsrstrsrs) -- (strstrstrstrs); 
    \draw (rtsrstrsrs) -- (rstsrstrsrs); 

    \node[vertex] at (e) {$e$};
    \node[vertexRefA] at (r) {$r$};
    \node[vertexRefA] at (srs) {$srs$};
    \node[vertex] at (rsrs) {$rsrs$};
    \node[vertex] at (srsr) {$srsr$};
    \node[vertexRefB] at (rsrsr) {$rsrsr$};
    \node[vertexRefA] at (rtsrstr) {$rtsrstr$};
    \node[vertexRefA] at (strstrstrstrs) {$strstrstrstrs$};
    \node[vertexRefC] at (rstrsrstrsr) {$rstrsrstrsr$};

    \node[vertex] at (tsrstr) {$tsrstr$};
    \node[vertexA] at (stsrstrsr) {$stsrstrsr$};
    \node[vertex] at (tsrstrsrs) {$tsrstrsrs$};
    \node[vertex] at (strstrstr) {$strstrstr$};
    \node[vertex] at (rstsrstrsr) {$rstsrstrsr$};
    \node[vertex] at (rtsrstrsrs) {$rtsrstrsrs$};
    \node[vertex] at (strstrsrst) {$strstrsrst$};
    \node[vertex] at (stsrstrsrs) {$stsrstrsrs$};
    \node[vertex] at (rstrstrstr) {$rstrstrstr$};
    \node[vertexA] at (rstsrstrsr) {$rstsrstrsr$};
    \node[vertexA] at (strsrstrsr) {$strsrstrsr$};
    \node[vertex] at (rstrstrsrst) {$rstrstrsrst$};
    \node[vertex] at (rstsrstrsrs) {$rstsrstrsrs$};
    \node[vertexA] at (strstrstrsr) {$strstrstrsr$};
    \node[vertexA] at (rstrstrstrsr) {$rstrstrstrsr$};
    \node[vertex] at (strstrstrsrs) {$strstrstrsrs$};
    \node[vertexA] at (strstrstrstr) {$strstrstrstr$};
    \node[vertex] at (rstrstrstrsrs) {$rstrstrstrsrs$};
    \node[vertexA] at (rstrstrstrstr) {$rstrstrstrstr$};
    \node[vertex] at (rstrstrstrstrs) {$rstrstrstrstrs$};
\end{tikzpicture}
        \caption{The subgraph of the Bruhat graph for the type $H_3$ Coxeter group where the black edges are labelled by reflections shaded black and the blue dashed edges are labelled by the reflection shaded blue.}
        \label{fig:H3-fail}
    \end{figure}
\end{example}

We leave as an open problem when the Bruhat preclosure is in fact a closure.
From the previous example, one can deduce any Coxeter system with $H_3$ as a subsystem does not produce a closure (such as $H_4$).
\autoref{thm:type_A_closure} on the other hand will show that in type $A$, the Bruhat preclosure is a closure.
Understanding the closure structure better would allow us to solve Dyer's conjecture more precisely.
Computer testing in types $B_n$ (up to $n = 4$) and $D_n$ (up to $n = 5$) seem to imply that the Bruhat preclosure is a closure in these types.
In type $F_4$ we are able to produce a counter-example through computer testing:
\begin{align*}
    A &= \{utstu,sts,usrtsrtsu,utu,u,tsrutsrtsut,rstsrutsrtsutsr\} \\
    \bclosure{A} &= \{utstu,sts,usrtsrtsu,utu,u,tsrutsrtsut,rstsrutsrtsutsr, ustsu,tsutsut\} \\
    \bclosure{\bclosure{A}} &= \{utstu,sts,usrtsrtsu,utu,u,tsrutsrtsut,rstsrutsrtsutsr,ustsu,tsutsut,utsrtsutsrutstu\}
\end{align*}
where $(rs)^3 = (st)^4 = (tu)^3 = e$ and all other simple reflection pairs commute.
Our intuition leads us to believe that the Bruhat preclosure is only a closure whenever $m_{st} = 3$ for all $s, t \in S$, in other words, only in the simply laced case (types $A$, $D$ and $E$).
We do not have a specific reason for this other than the facts that types $H$ and $F$ fail, but type $A$ is successful.
Thus, we conjecture the following:
\begin{conjecture}
    Let $W$ be a type $A$, $D$, or $E$ Coxeter group and let $A \subseteq T$.
    Then $\bclosure{A} = \bclosure{\bclosure{A}}$.
\end{conjecture}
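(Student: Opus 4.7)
The plan is to argue by minimal counterexample together with a local rewriting argument inside dihedral reflection subgroups, exploiting the fact that in a simply laced Coxeter system every dihedral reflection subgroup $W'$ satisfies $\order{\chi(W')} \leq 2$ and, when $\order{\chi(W')} = 2$, the product of its two generators has order $2$ or $3$. Suppose for contradiction that there exists $r \in \bclosure{\bclosure{A}} \setminus \bclosure{A}$, and fix a $\bclosure{A}$-path
\[
    e = x_0 \xrightarrow{t_1} x_1 \xrightarrow{t_2} \cdots \xrightarrow{t_n} x_n = r
\]
of minimal total length witnessing $r \in \bclosure{\bclosure{A}}$. Minimality forces at least one label $t_j$ to lie in $\bclosure{A} \setminus A$, and each such $t_j$ comes equipped with its own $A$-path from $e$ to $t_j$.

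The key step would be a local replacement lemma: whenever $t_j$ lies in the dihedral reflection subgroup generated by two reflections $a, b \in A$, the simply laced hypothesis forces this subgroup to be of type $A_1 \times A_1$ or $A_2$, so in the nontrivial case $t_j = aba = bab$ and the three reflections $\{a, b, t_j\}$ are permuted by the natural $S_3$-symmetry of the $A_2$ Bruhat graph (compare \autoref{ex:A2-normal}). I would use this to substitute the single edge $x_{j-1} \xrightarrow{t_j} x_j$ by a short $A$-path inside the coset $x_{j-1} W'$, and then iterate. The reduction from an arbitrary $t_j \in \bclosure{A} \setminus A$ to one generated inside a rank-2 subsystem of $A$ would proceed by induction on the length of the fixed $A$-path to $t_j$, applying the local swap to the last edge and invoking \autoref{thm:Bruhat_is_closure} on the inversion set $N(x_{j-1})$ to control which reflections are descents versus ascents at the point of substitution.

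The hard part will be verifying that each local swap preserves the Bruhat-edge condition globally, i.e.\ that after substitution the successive length-function values continue to strictly increase along the whole path. This is precisely the point where the $H_3$ and $F_4$ counterexamples above break down: the longer dihedral rotations $srsrs$ and $utstu$ cannot be swapped in place with their generating reflections because intermediate elements of the dihedral subgroup sit \emph{above} (rather than below) the relevant vertex $x_{j-1}$ in Bruhat order, whereas the short braid $aba = bab$ keeps all intermediate elements at length strictly between $\ell(x_{j-1})$ and $\ell(x_j)$ whenever the length gap is at least $3$. Handling the case where the Bruhat edge $x_{j-1} \xrightarrow{t_j} x_j$ has length gap $1$ (so no intermediate elements exist) appears to be the truly delicate obstacle, and will likely require a root-theoretic reformulation of the Bruhat preclosure in terms of a positive-closure operation on the root subsystem spanned by $A$. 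Establishing such a reformulation in general simply laced types is presumably the missing ingredient that explains why the author states this as a conjecture rather than a theorem, and it is likely also the key to extending the argument uniformly to affine and infinite simply laced Coxeter systems.
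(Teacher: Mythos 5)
First, be aware that the statement you are proving is stated in the paper as an open conjecture: the author explicitly says it has not been shown, and the paper only establishes the type $A$ case (\autoref{thm:type_A_closure}), via the symmetric-group model and \autoref{lem:A-reflection-only-between}, not by any dihedral-subgroup argument. So there is no ``paper proof'' to match; your text would need to be a complete self-contained proof, and it is not — by your own admission the ``local replacement lemma'' and the length-gap-one case are left unresolved, so what you have is a plan, not a proof.

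The plan itself has a concrete flaw beyond the admitted gaps: your key step assumes that a label $t_j \in \bclosure{A} \setminus \bclosure{A}\cap A$ (i.e.\ in $\bclosure{A}\setminus A$) lies in the dihedral reflection subgroup generated by \emph{two} reflections of $A$, so that the simply laced hypothesis forces $t_j = aba$. That premise is false in general: an element of $\bclosure{A}\setminus A$ is typically a product of arbitrarily many reflections of $A$. This is exactly what the paper's type $A$ analysis shows — in \autoref{lem:A-reflection-only-between} the reflection $r=(a,b)$ is a palindromic product $(a,c_1)(c_1,c_2)\cdots(c_{m-1},b)\cdots(a,c_1)$ with $m$ distinct elements of $A$, and the substitution that makes \autoref{thm:type_A_closure} work replaces $t_i$ by the \emph{first or second half} of such a palindrome, with the length-increase verified by explicit inversion counting in two-line notation; no reduction to a rank-$2$ subsystem occurs, and your proposed induction ``on the length of the fixed $A$-path to $t_j$'' does not produce one. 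Moreover, even when $t_j$ does equal $aba$ with $a,b\in A$, the replacement of the edge $x_{j-1}\xrightarrow{t_j}x_j$ by the path through $x_{j-1}a$ and $x_{j-1}ab$ requires all three intermediate length comparisons to go up, which fails whenever $x_{j-1}$ is not minimal in its coset of the dihedral subgroup (and is impossible when $\ell(x_j)-\ell(x_{j-1})=1$); you flag this but offer no mechanism to repair the rest of the path after such a failure. Until you can formulate and prove a substitution lemma that handles palindromes of arbitrary length and controls lengths globally along the path — which is precisely the obstruction the paper cites for leaving $D$ and $E$ open, given that the analogous statement fails in $H_3$ and $F_4$ — the argument does not go through.
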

This conjecture would imply both that the Bruhat preclosure is a closure in these types and that Dyer's conjecture is true (using \autoref{thm:main}).
As of yet, we have not been able to show these results.

\section{Twisted Bruhat Orders}
\label{sec:twisted-bruhat-orders}
In this section we show that \autoref{ex:A3_ex} is true in general: \ie that the Bruhat preclosure of an inversion set is itself.
This will be done using twisted Bruhat orders which were first studied by Dyer in \cite{Dyer_HeckeAlgebrasAndShellingsOfBruhatIntervals} and is a way of reversing particular edges in the Bruhat graph to get a twisted version of the Bruhat graph.
They are a sort of generalization of the Bruhat order in Coxeter groups as described in \autoref{ssec:twisted-bruhat-graphs}.

\subsection{Initial sections}\label{ssec:initial-sections}
A \definition{reflection order} on the set of reflections $T$ is a total order $\leqslant$ such that for any dihedral reflection subgroup $W'$ of $W$ with $\chi(W') = \left\{ r,s \right\}$ either $r \leslant rsr \leslant \cdots \leslant srs \leslant s$ or $s \leslant srs \leslant \cdots \leslant rsr \leslant r$.
It is an amazing result that reflection orders exist on Coxeter systems (see \cite[Proposition 2.5]{Dyer_HeckeAlgebrasAndShellingsOfBruhatIntervals}).
We say that a subset $A$ of $T$ is an \definition{initial section} if there exists a reflection order $\leqslant$ on $T$ such that $a \leslant b$ for all $a \in A$ and $b \in T\bs A$.
We let $\initSects$ denote the set of all initial sections.
In fact, the finite initial sections are inversion sets.

\begin{proposition}[{\cite[Lemma 2.11]{Dyer_HeckeAlgebrasAndShellingsOfBruhatIntervals}}]
    \label{prop:finite_inits_are_inversions}
    Let $A$ be a finite subset of $T$.
    The following are equivalent:
    \begin{enumerate}
        \item $A \in \initSects$
        \item $A = N(w)$ for some $w \in W$.
    \end{enumerate}
\end{proposition}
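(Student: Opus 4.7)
The strategy is to prove the two implications (ii)$\Rightarrow$(i) and (i)$\Rightarrow$(ii) separately.

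For (ii)$\Rightarrow$(i), the plan is to fix a reduced expression $w = s_{i_1} s_{i_2} \cdots s_{i_n}$ and enumerate the inversions of $w$ in the standard way by setting $t_k = s_{i_1} \cdots s_{i_{k-1}} s_{i_k} s_{i_{k-1}} \cdots s_{i_1}$ for $1 \leq k \leq n$. Declaring $t_1 \prec t_2 \prec \cdots \prec t_n$ yields a total order on $N(w)$ which, by a direct check, satisfies the dihedral axiom on every rank-two reflection subsystem contained in $N(w)$. The existence theorem for reflection orders (one of Dyer's main results) then allows this partial datum to be extended to a total reflection order $\leqslant$ on all of $T$, under which $N(w)$ is precisely the initial section of length $n$.

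For (i)$\Rightarrow$(ii), I would argue by induction on $|A|$, the base case $|A|=0$ being immediate since $\emptyset = N(e)$. For the inductive step, fix a reflection order $\leqslant$ having $A$ as an initial section and let $s$ denote its $\leqslant$-minimum. The central claim is that $s$ lies in $S$: if instead $s = uru^{\inv}$ with $r \in S$ and $u \ne e$ of minimal length, then a dihedral subgroup extracted from a reduced expression of $u$ together with $r$ exhibits a reflection strictly below $s$ in $\leqslant$, contradicting minimality. Granted this, set $A' = s(A\setminus\{s\})s$, which is a subset of $T$ of cardinality $|A|-1$ not containing $s$, and observe that conjugation by $s$ transports $\leqslant$ to a reflection order under which $A'$ is again a finite initial section. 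By induction $A' = N(w')$ for some $w' \in W$, and since $s \notin A' = N(w')$, the reflection cocycle identity yields
\[
    N(sw') = \{s\} \symdiff s N(w') s = \{s\} \cup s A' s = A,
\]
completing the induction with $w = sw'$.

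The main obstacle is the claim that the $\leqslant$-minimum of a finite initial section must be a simple reflection; making this rigorous requires a careful case analysis of the dihedral axiom on the subgroups witnessing any would-be non-simple minimum. A secondary subtlety is verifying that conjugation by $s$ really does transport one reflection order to another in a way compatible with finite initial sections, which ultimately comes back to the same kind of dihedral-subgroup bookkeeping and is where most of the technical work lives.
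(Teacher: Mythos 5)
The paper does not prove this proposition at all --- it is imported verbatim as Dyer's Lemma~2.11 --- so your argument has to stand on its own, and as written it has two genuine gaps. The more serious one is in (ii)$\Rightarrow$(i): after ordering $N(w)$ by a reduced word and checking dihedral compatibility inside $N(w)$, you invoke ``the existence theorem for reflection orders'' to extend this partial datum to a total reflection order having $N(w)$ as an initial section. Dyer's existence result (the Proposition~2.5 cited in this paper) only asserts that \emph{some} reflection order on $T$ exists; it is not an extension theorem for partially prescribed, dihedrally compatible data, and no such off-the-shelf theorem is available. Indeed, the principle ``every subset satisfying the dihedral compatibility conditions (a biclosed set) is an initial section'' is a well-known open conjecture of Dyer for infinite sets, and for finite sets it is essentially the proposition you are trying to prove, so the appeal is either circular or to a theorem that does not exist. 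A correct route is an induction on $\ell(w)$ resting on a twisting lemma for reflection orders (reorder $T \bs \{s\}$ by conjugation and place $s$ at one end), i.e.\ the same technical lemma your other direction secretly needs.

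In (i)$\Rightarrow$(ii), your claim that the $\leqslant$-minimum of a nonempty finite initial section is simple is correct, and the cocycle step $N(sw') = \{s\} \symdiff sN(w')s = A$ is fine. The gap is the assertion that ``conjugation by $s$ transports $\leqslant$ to a reflection order.'' Taken literally --- push $\leqslant$ forward along $x \mapsto sxs$, so that $s$ stays at the bottom --- this is false: in the infinite dihedral group with the order $s \leslant sts \leslant ststs \leslant \cdots \leslant tst \leslant t$, the pushforward is $s \leslant t \leslant tst \leslant \cdots \leslant sts$, which violates the dihedral axiom, since the only two reflection orders there are the displayed one and its reverse. What is true, and what your induction actually needs, is that conjugating $T \bs \{s\}$ by $s$ and moving $s$ to the \emph{top} yields a reflection order whenever $s$ is the $\leqslant$-minimum; proving this requires treating separately the dihedral reflection subgroups containing $s$ (where minimality of $s$ forces the restriction of $\leqslant$ to start at $s$, and the modified order is the reversed listing) and those not containing $s$ (where one must check that conjugation by $s$ carries canonical generator pairs to canonical generator pairs, e.g.\ because $s$ permutes the relevant positive roots). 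So the skeleton of your induction is sound and repairable, but the transport step as stated would fail, and it is precisely where the real content --- the stability of initial sections under twisting by a simple reflection --- has to be established rather than observed.
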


This means for finite $W$ every inversion set is an initial section of some reflection order and vice-versa.
\begin{example}
    Let us look at the Coxeter group of type $A_3$ with $S = \left\{ r,s,t \right\}$ such that $(rs)^3 = (st)^3 = (rt)^2 = e$.
    Recall that the set of reflections for $A_3$ is given by $T = \left\{ r, s, t, rsr, sts, rstsr \right\}$.
    The following is then a reflection order on $A_3$:
    \[
        r \leslant t \leslant rstsr \leslant sts \leslant rsr \leslant s.
    \]
    Notice that taking any initial section of the reflection gives us an inversion set.
    As an example, the initial section
    \[
        r \leslant t \leslant rstsr
    \]
    is associated to the inversion set $N(rts) = \left\{ r, t, rstsr \right\}$ as in \autoref{ex:A3_ex}.

    It should be noted that not every inversion set is an initial section of this particular reflection order.
    Other reflection orders are needed in order to find the other inversion sets.
    In particular, in the finite case, the set of all reduced expressions of the long element gives the reflection orders necessary to get all inversion sets.
\end{example}

\subsection{Twisted Bruhat graphs}\label{ssec:twisted-bruhat-graphs}
We use these initial sections to define partial orders on $W$ called twisted Bruhat orders. 

Given a subset $A$ of $T$ we take the edge set $E$ of $\Omega$ and twist all edges labelled with elements in $A$ to reverse their direction.
This new graph is called the \definition{twisted Bruhat graph of $A$} and is denoted $\Omega_{(W,A)}$ where the vertex set is $W$ and the set of directed edges is given by
\begin{align*}
    E_{(W,A)} &= \begin{cases*}
        (u, ut) & for $t \notin A$ and $(u,ut) \in E$\\
        (ut, u) & for $t \in A$ and $(u,ut) \in E$
    \end{cases*}\\
    &= \begin{cases*}
        u \xrightarrow{t} ut & for $t \notin A$ and $(u,ut) \in E$\\
        ut \xrightarrow{t} u & for $t \in A$ and $(u,ut) \in E$.
    \end{cases*}
\end{align*}

\begin{remark}
    By comparing the edges, we note that the subgraph of $\Omega_{(W, A)}$ where we restrict to edges labelled by $t \in A$ is precisely the subgraph $\Omega_A$ (where we restrict $\Omega$ to edges labelled by $A$) with \emph{all} arrows reversed.
    In other words, there is a right $A$-path from $e$ to $w$ if and only if there exists a path from $w$ to $e$ in $\Omega_{(W, A)}$.
\end{remark}

Another method of describing the edge set $E_{(W, A)}$ is through a twisting of inversion sets.
For $w \in W$ and $A \subseteq T$ the \definition{twisted inversion set of $w$ by $A$} is the $W$-action on $\mcP(T)$ given by the reflection cocycle
\[
    \twistedInv{A}{w} = N(w) \symdiff wAw^{\inv}.
\]
Note that for $u \in W$ we have $\twistedInv{N(u)}{w} = N(wu)$.
Furthermore, if $A = \varnothing$ then the twisted inversion set is the standard inversion set, \ie $w \cdot \varnothing = N(w)$.

Twisted inversion sets behave in a very similar way to standard inversion sets.
\begin{lemma}
    \label{lem:a_notin_wA_iff_a_in_twA}
    For $A \subseteq T$,  $w \in W$ and $t \in T$ then $t \notin \twistedInv{A}{w}$ if and only if $t \in tw \cdot A$.
\end{lemma}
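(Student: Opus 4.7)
The plan is to unpack both sides of the stated equivalence using the definition $\twistedInv{A}{w} = N(w) \symdiff wAw^{\inv}$ and express membership as a pair of Boolean conditions combined by exclusive-or. Concretely, writing $P := [t \in N(w)]$ and $Q := [t \in wAw^{\inv}]$, the left-hand statement $t \notin \twistedInv{A}{w}$ becomes $\neg(P \oplus Q)$.

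For the right-hand side $t \in tw \cdot A = N(tw) \symdiff (tw)A(tw)^{\inv}$, I would rewrite both pieces in terms of $P$ and $Q$. First, using the standard fact that left-multiplication by a reflection changes length by $\pm 1$, we have $t \in N(tw) \iff \ell(w) < \ell(tw) \iff t \notin N(w)$, so $[t \in N(tw)] = \neg P$. Second, since $t$ is an involution, conjugation by $t$ fixes $t$, so $t \in (tw)A(tw)^{\inv} = t(wAw^{\inv})t$ if and only if $t \in wAw^{\inv}$; hence $[t \in (tw)A(tw)^{\inv}] = Q$. Therefore the right-hand condition $t \in tw \cdot A$ is precisely $(\neg P) \oplus Q$, which equals $\neg(P \oplus Q)$.

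Comparing the two expressions gives $t \in tw \cdot A \iff t \notin \twistedInv{A}{w}$, as required. The only nontrivial ingredient is the length identity $t \in N(w) \iff t \notin N(tw)$, which is immediate from the definition $N(x) = \{t \in T : \ell(tx) < \ell(x)\}$ together with the parity of length under multiplication by a reflection; everything else is formal manipulation of the symmetric difference, so I would not expect any real obstacle. The main task in writing it out cleanly is just to keep the bookkeeping of which of $N(w)$ versus $wAw^{\inv}$ corresponds to which side of the XOR.
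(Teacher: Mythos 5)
Your argument is correct and is essentially the paper's proof in different clothing: both unwind the symmetric difference at the single element $t$, using that conjugation by $t$ leaves $t$'s membership in the $wAw^{\inv}$-part unchanged while its membership in the inversion-set part flips. The paper obtains the flip from the cocycle identity $N(tw)=N(t)\symdiff tN(w)t^{\inv}$ rather than from the length characterization, but that is a presentational difference, not a different route.
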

\begin{proof}
    By definition $t \notin \twistedInv{A}{w}$ if and only if $t \notin N(w) \symdiff wAw^{\inv}$.
    Or in other words if and only if $t \notin tN(w)t^{\inv} \symdiff twAw^{\inv} t^{\inv}$.
    Since $t \in N(t)$ and $\symdiff$ is associative we have 
    \begin{align*}
        t \in &(N(t) \symdiff tN(w)t^{\inv}) \symdiff tw(A)w^{\inv} t^{\inv} \\
            &= N(tw) \symdiff twAw^{\inv} t^{\inv} \\
            &= tw \cdot A
    \end{align*}
    as desired.
\end{proof}

These twisted inversion sets can then be used to characterize the edges of a twisted Bruhat graph.
\begin{proposition}\label{prop:change_arrows}
    For $A \subseteq T$ and $u,v \in W$ such that $v = ut = t'u$ then $(u,v) \in E_{(W,A)}$ if and only if $t' \notin \twistedInv{A}{u}$.
\end{proposition}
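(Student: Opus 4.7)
The plan is to unfold both sides of the stated equivalence and match them case-by-case based on whether $t \in A$ or $t \notin A$. Since $v = ut = t'u$, we have $t' = utu^{\inv}$, so the bijection $t \leftrightarrow t' = utu^{\inv}$ sends $A$ to $uAu^{\inv}$; in particular, $t \in A$ if and only if $t' \in uAu^{\inv}$. I would highlight this observation first, since it is the bridge between the edge label $t$ (used in the definition of $E_{(W,A)}$) and the reflection $t'$ (appearing in $\twistedInv{A}{u}$).

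Next I would rewrite the condition $(u,v) \in E_{(W,A)}$ by splitting on the membership of $t$ in $A$. By the definition of $\Omega_{(W,A)}$, the pair $(u,v)$ is an edge exactly in the two cases
\begin{itemize}
    \item $t \notin A$ and $(u,v) \in E$, i.e.\ $\ell(u) < \ell(v)$, or
    \item $t \in A$ and $(v,u) \in E$, i.e.\ $\ell(v) < \ell(u)$.
\end{itemize}
The length comparisons translate to the inversion set via the standard identity: since $v = t'u$, we have $\ell(u) < \ell(v)$ if and only if $t' \notin N(u)$, and the reverse inequality if and only if $t' \in N(u)$. Therefore $(u,v) \in E_{(W,A)}$ is equivalent to
\[
    (t \notin A \text{ and } t' \notin N(u)) \quad\text{or}\quad (t \in A \text{ and } t' \in N(u)).
\]

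Finally I would expand the right-hand side. By definition $\twistedInv{A}{u} = N(u) \symdiff uAu^{\inv}$, so $t' \notin \twistedInv{A}{u}$ holds precisely when $t'$ lies in both of $N(u)$ and $uAu^{\inv}$ or in neither of them. Using the opening observation that $t' \in uAu^{\inv}$ iff $t \in A$, this is exactly the disjunction displayed above. Comparing the two characterizations completes the proof.

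There is no serious obstacle here: the statement is essentially a bookkeeping exercise translating between edge-labels (the $t$'s attached to arrows of $\Omega$) and their conjugates (the $t'$'s used to build twisted inversion sets). The only care needed is to be consistent about which reflection plays which role and to apply the correct direction of the equivalence $\ell(u) < \ell(t'u) \iff t' \notin N(u)$; everything else is definitional.
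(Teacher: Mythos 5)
Your proposal is correct and follows essentially the same route as the paper's proof: both use the conjugation fact $t \in A \iff t' \in uAu^{\inv}$, translate the length conditions in the definition of $E_{(W,A)}$ into membership of $t'$ in $N(u)$, and then observe that ``in both or in neither'' is exactly non-membership in the symmetric difference $N(u) \symdiff uAu^{\inv} = \twistedInv{A}{u}$. No gaps; nothing further to add.
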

\begin{proof}
    We recall the following fact (since $ut = t'u$)
    \[
        t \in A \text{ if and only if } t' \in uAu^{\inv} \tag{$\star$}\label{eq:ca_2}
    \]
    and the definition of an inversion set
    \[
        \ell(u) > \ell(t'u) = \ell(v) \tag{$\star\star$}\label{eq:ca_1}
        \text{ if and only if }
        t' \in N(u).
    \]

    Suppose $(u,v) \in E_{(W,A)}$.
    By definition this is true if and only if $t \notin A$ and $\ell(u) < \ell(ut)$ or $t \in A$ and $\ell(u) > \ell(ut)$.
    But by \autoref{eq:ca_2} and \autoref{eq:ca_1} this is true if and only if $t'$ is neither in $uAu^{\inv}$ nor in $N(u)$ or in both.
    In other words, if and only if $t' \notin N(u) \symdiff uAu^{\inv} = u \cdot A$ as desired.

\end{proof}

Therefore, the edge set of a twisted Bruhat graph can be described as:
\begin{equation}
    \label{eq:twisted-graph}
    E_{(W,A)} = \set{(w,tw)}{w \in W,\, t \notin \twistedInv{A}{w}} = \set{(tw,w)}{w \in W,\, t \in \twistedInv{A}{w}}
\end{equation}
where the second equality uses \autoref{lem:a_notin_wA_iff_a_in_twA}.

We are now ready to give a definition of a twisted Bruhat order using the twisted Bruhat graph.
Let $C_n^A(u,v)$ be the set
\[
    C_n^A(u,v) = \set{(w_0, w_1, \ldots, w_n) \in W^{n+1}}{(w_{i-1},w_i) \in E_{(W,A)},\, w_0 = u,\, w_n = v}.
\]
Given an initial section $A \in \initSects$, the \definition{twisted Bruhat order} $\leqA$ is the partial order on $W$ where $u \leqA v$ if and only if $C_n^A(u,v) \neq \emptyset$ for some $n \in \N$.
It was shown in \cite[Theorem 8.1.4]{Edgar_DominanceAndRegularityInCoxeterGroups} that this is a partial order if and only if $A$ is an initial section.
Note that if $A$ is not an initial section a twisted Bruhat order might not exist, but the twisted Bruhat graph can still be defined.

\begin{example}\label{ex:A2}
    We use a Coxeter group of type $A_2$ to give an example of a twisted Bruhat order and an anti-example for when we don't have an initial section.
    The (untwisted) Bruhat graph is given in \autoref{ex:A2-normal}.
    First let $S = \left\{ s,t \right\}$, $A = N(st) = \left\{ s, sts \right\}$ and $A' = \left\{ s,t \right\}$.
    Then recalling our definition of $E_{(W,A)}$ we have the following two figures respectively.
    \begin{center}
        \begin{figure}[H]
            \begin{tikzpicture}
                [vertex/.style={inner sep=1pt,draw=black, circle, fill=black, thick},
                    arr/.style={shorten >=4pt,shorten <=4pt,->,thick}
                ]
                \begin{scope}
                    \coordinate (e) at (0,0);
                    \coordinate (s) at (-1,1);
                    \coordinate (t) at (1,1);
                    \coordinate (st) at (-1,2);
                    \coordinate (ts) at (1,2);
                    \coordinate (sts) at (0,3);

                    \node[vertex] at (e) {};
                    \node[vertex] at (s) {};
                    \node[vertex] at (t) {};
                    \node[vertex] at (st) {};
                    \node[vertex] at (ts) {};
                    \node[vertex] at (sts) {};

                    \node at (-2.2,1.5) {$\Omega_{(W,A)}:$};
                    \node[below] at (e) {$e$};
                    \node[left] at (s) {$s$};
                    \node[right] at (t) {$t$};
                    \node[left] at (st) {$st$};
                    \node[right] at (ts) {$ts$};
                    \node[above] at (sts) {$sts$};

                    \draw[arr, red] (s) -- (e);
                    \draw[arr, blue] (e) -- (t);
                    \draw[arr] (sts) -- (e);

                    \draw[arr, blue] (s) -- (st);
                    \draw[arr] (ts) -- (s);
                    \draw[arr, red] (ts) -- (t);
                    \draw[arr] (st) -- (t);

                    \draw[arr, red] (sts) -- (st);
                    \draw[arr, blue] (ts) -- (sts);
                \end{scope}
                \begin{scope}[shift={(5,0)}]
                    \coordinate (e) at (0,0);
                    \coordinate (s) at (-1,1);
                    \coordinate (t) at (1,1);
                    \coordinate (st) at (-1,2);
                    \coordinate (ts) at (1,2);
                    \coordinate (sts) at (0,3);

                    \node[vertex] at (e) {};
                    \node[vertex] at (s) {};
                    \node[vertex] at (t) {};
                    \node[vertex] at (st) {};
                    \node[vertex] at (ts) {};
                    \node[vertex] at (sts) {};

                    \node at (-2.2,1.5) {$\Omega_{(W,A')}:$};
                    \node[below] at (e) {$e$};
                    \node[left] at (s) {$s$};
                    \node[right] at (t) {$t$};
                    \node[left] at (st) {$st$};
                    \node[right] at (ts) {$ts$};
                    \node[above] at (sts) {$sts$};

                    \draw[arr, red] (s) -- (e);
                    \draw[arr, blue] (t) -- (e);
                    \draw[arr] (e) -- (sts);

                    \draw[arr, blue] (st) -- (s);
                    \draw[arr] (s) -- (ts);
                    \draw[arr, red] (ts) -- (t);
                    \draw[arr] (t) -- (st);

                    \draw[arr, red] (sts) -- (st);
                    \draw[arr, blue] (sts) -- (ts);
                \end{scope}
            \end{tikzpicture}
        \end{figure}
    \end{center}

    Notice that $A$ is an initial section (as it is an inversion set) and our graph gives us an order with $ts$ as the bottom element and $t$ as the top element.
    On the other hand, $A'$ is not an initial section and no partial order exists.
    In particular, we have the cycle $e \to sts \to ts \to t \to e$ in the graph and cycles cannot occur in partial orders.
\end{example}

In the case that $A$ is a finite initial section the twisted Bruhat order is nothing more than a poset isomorphism.
More explicitly, if $A = N(w)$ for some $w \in W$ then we have a poset isomorphism $x \mapsto xw^{\inv}$.
In the previous examples we saw this with the case $w = st$.
The identity was sent to $ts$ making $ts$ the bottom element.
Similarly, the top element $\longEl = sts$ was sent to $(sts)(ts) = t$.

This isomorphism makes most of the observations here trivial in some sense by using the poset isomorphism for the results.
The interesting case comes when $A$ is not finite and we no longer get a poset isomorphism as seen in the following example.
\begin{example}
    \label{ex:Ainf}
    Let $W$ be the infinite Coxeter group with $S = \left\{ s,t \right\}$ such that $m_{st} = \infty$, \ie there is no relation between $s$ and $t$.
    The Bruhat graph for this is given in the figure below on the left.
    As there are only two simple reflections, there are only two reflection orders:
    \[
        s \leslant sts \leslant ststs \leslant \ldots \leslant tstst \leslant tst \leslant t
    \]
    and its reverse.
    We let $A$ be the initial section of all reflections that start with $s$.
    In other words, $A = \left\{ s, sts, ststs, \ldots \right\} = \set{r \in T}{\ell(sr) < \ell(r)}$.
    This set is infinite and is therefore not associated to any inversion set.
    The twisted Bruhat order associated to $A$ is given in the figure on the right.
    Notice that there is in fact no bottom element!
    This fact will be even more evident after the next section.
    Therefore, there does not exist a poset isomorphism between these two posets:

    \begin{center}
        \begin{tikzpicture}
                [vertex/.style={inner sep=1pt,draw=black, circle, fill=black, thick},
                    arr/.style={shorten >=4pt,shorten <=4pt,->,thick},
                    sarr/.style={color=red},
                    tarr/.style={color=blue},
                    stsarr/.style={color=green},
                    tstarr/.style={},
                    ststsarr/.style={},
                    tststarr/.style={},
                    xscale=0.65,
                    yscale=0.7
                ]
                \begin{scope}[shift={(-3,0)}]
                    \coordinate (e) at (0,0);
                    \coordinate (s) at (-2,1);
                    \coordinate (t) at (2,1);
                    \coordinate (st) at (-3,2);
                    \coordinate (ts) at (3,2);
                    \coordinate (sts) at (-3.5,3);
                    \coordinate (tst) at (3.5,3);
                    \coordinate (stst) at (-3.75,4);
                    \coordinate (tsts) at (3.75,4);

                    \node[vertex] at (e) {};
                    \node[vertex] at (s) {};
                    \node[vertex] at (t) {};
                    \node[vertex] at (st) {};
                    \node[vertex] at (ts) {};
                    \node[vertex] at (sts) {};
                    \node[vertex] at (tst) {};
                    \node[vertex] at (stst) {};
                    \node[vertex] at (tsts) {};

                    \node[below] at (e) {$e$};
                    \node[left] at (s) {$s$};
                    \node[right] at (t) {$t$};
                    \node[left] at (st) {$st$};
                    \node[right] at (ts) {$ts$};
                    \node[left] at (sts) {$sts$};
                    \node[right] at (tst) {$tst$};
                    \node[right] at (stst) {$stst$};
                    \node[left] at (tsts) {$tsts$};

                    \draw[arr,sarr] (e) -- (s);
                    \draw[arr,tarr] (e) -- (t);
                    \draw[arr,stsarr] (e) -- (sts);
                    \draw[arr,tstarr] (e) -- (tst);

                    \draw[arr,tarr] (s) -- (st);
                    \draw[arr,stsarr] (s) -- (ts);
                    \draw[arr,tstarr] (s) -- (stst);

                    \draw[arr,sarr] (t) -- (ts);
                    \draw[arr,stsarr] (t) -- (tsts);
                    \draw[arr,tstarr] (t) -- (st);

                    \draw[arr,sarr] (st) -- (sts);

                    \draw[arr,tarr] (ts) -- (tst);

                    \draw[arr,tarr] (sts) -- (stst);

                    \draw[arr,sarr] (tst) -- (tsts);


                \end{scope}
                \begin{scope}[shift={(7,0)}]
                    \coordinate (e) at (0,0);
                    \coordinate (s) at (-2,1);
                    \coordinate (t) at (2,1);
                    \coordinate (st) at (-3,2);
                    \coordinate (ts) at (3,2);
                    \coordinate (sts) at (-3.5,3);
                    \coordinate (tst) at (3.5,3);
                    \coordinate (stst) at (-3.75,4);
                    \coordinate (tsts) at (3.75,4);

                    \node[vertex] at (e) {};
                    \node[vertex] at (s) {};
                    \node[vertex] at (t) {};
                    \node[vertex] at (st) {};
                    \node[vertex] at (ts) {};
                    \node[vertex] at (sts) {};
                    \node[vertex] at (tst) {};
                    \node[vertex] at (stst) {};
                    \node[vertex] at (tsts) {};

                    \node[below] at (e) {$e$};
                    \node[left] at (s) {$s$};
                    \node[right] at (t) {$t$};
                    \node[left] at (st) {$st$};
                    \node[right] at (ts) {$ts$};
                    \node[left] at (sts) {$sts$};
                    \node[right] at (tst) {$tst$};
                    \node[right] at (stst) {$stst$};
                    \node[left] at (tsts) {$tsts$};

                    \draw[arr,tarr] (e) -- (t);
                    \draw[arr,tstarr] (e) -- (tst);

                    \draw[arr,sarr] (s) -- (e);
                    \draw[arr,tarr] (s) -- (st);
                    \draw[arr,tstarr] (s) -- (stst);

                    \draw[arr,tstarr] (t) -- (st);


                    \draw[arr,sarr] (ts) -- (t);
                    \draw[arr,tarr] (ts) -- (tst);
                    \draw[arr,stsarr] (ts) -- (s);

                    \draw[arr,sarr] (sts) -- (st);
                    \draw[arr,tarr] (sts) -- (stst);
                    \draw[arr,stsarr] (sts) -- (e);

                    \draw[arr,sarr] (tsts) -- (tst);
                    \draw[arr,stsarr] (tsts) -- (t);
                \end{scope}
        \end{tikzpicture}
    \end{center}
    In the above example we have colored the edges red for $s$, blue for $t$, green for $sts$ and black for $tst$.
\end{example}

\subsection{Twisted length}
Just as with the standard Bruhat order, a useful property associated to twisted Bruhat order is a length function.
As we did with inversion sets we generalize the standard length function for subsets of $T$.
For $A \subseteq T$ and $v,w \in W$ we define the \definition{twisted length function from $v$ to $w$ with respect to $A$} as the function given by
\[
    \ell_A(v,w) = \ell(wv^{\inv}) - 2 \order{N(vw^{\inv}) \cap v \cdot A}.
\]
For ease of notation we let $\ell_A(w) = \ell_A(e,w)$ where $e$ is the identity element in $W$.
Then, we have that$e \cdot A = N(e) \symdiff eAe^{\inv} = A$ implies $\ell_A(w) = \ell(w) - 2 \order{N(w^{\inv}) \cap A}$.
As expected, if $A = \varnothing$ then we get the standard length function, $\ell_{\varnothing}(w) = \ell(w) - 2\order{N(w^{\inv}) \cap e \cdot \varnothing} = \ell(w)$ since $e \cdot \varnothing = N(e) = \varnothing$.

\begin{example}
    We give examples of twisted length functions for the Coxeter group of type $A_2$.
    Let $S = \left\{ s,t \right\}$, $A = N(st) = \left\{ s,sts \right\}$ and $A' = \left\{ s,t \right\}$ as in \autoref{ex:A2}.
    We end up with the following twisted lengths:
    \begin{center}
        \begin{tabular}{r || c | c}
            $w$     & $\ell_A(w)$ & $\ell_{A'}(w)$ \\
            \hline
            \hline
            $e$     & $0$   & $0$   \\
            $s$     & $-1$  & $-1$  \\
            $t$     & $1$   & $-1$  \\
            $st$    & $0$   & $0$   \\
            $ts$    & $-2$  & $0$   \\
            $sts$   & $-1$  & $-1$  \\
        \end{tabular}
    \end{center}
    Note that although $A'$ is not an initial section, we can still use the length function.
\end{example}
\begin{example}
    We give an example of a twisted length function for an infinite Coxeter group continuing from \autoref{ex:Ainf} where $W$ is the infinite Coxeter group with $S = \left\{ s,t \right\}$ and where $A = \left\{ s, sts, ststs, \ldots \right\}$.
    
    Since everything is dependent on $\order{N(w^{\inv}) \cap A}$ then the twisted length function will be dependent on whether a word ends in $s$ or not.
    If $\ell(ws) < \ell(w)$ then $N(w^{\inv}) \cap A = N(w^{\inv}) = \ell(w)$ since $w^{\inv}$ begins with $s$ making all reflections in $N(w^{\inv})$ start with $s$.
    Likewise, if $\ell(ws) > \ell(w)$ then $N(w^{\inv}) \cap A = \varnothing$ as no reflections in $N(w^{\inv})$ will start with $s$.
    Therefore, we have $\ell_A(w) = \pm \ell(w)$ depending on the last letter in a reduced expression for $w$.
    Here are a few of these lengths:
    \begin{center}
        \begin{tabular}{r || c}
            $w$     & $\ell_A(w)$ \\
            \hline
            \hline
            $e$     & $0$   \\
            $s$     & $-1$  \\
            $ts$    & $-2$  \\
            $sts$   & $-3$  \\
        \end{tabular}
        $\qquad$
        \begin{tabular}{r || c}
            $w$     & $\ell_A(w)$ \\
            \hline
            \hline
            $t$     & $1$   \\
            $st$    & $2$   \\
            $tst$   & $3$   \\
            $stst$  & $4$   \\
        \end{tabular}
    \end{center}

    This allows us to redraw the twisted Bruhat graph of \autoref{ex:Ainf} as a total order, further solidifying the fact that no poset isomorphism exists between the twisted Bruhat poset associated to $A$ and the Bruhat poset.

    \begin{center}
        \begin{tikzpicture}
                [vertex/.style={inner sep=1pt,draw=black, circle, fill=black, thick},
                    arr/.style={shorten >=4pt,shorten <=4pt,->,thick},
                    sarr/.style={color=red},
                    tarr/.style={color=blue},
                    stsarr/.style={color=green},
                    tstarr/.style={},
                    ststsarr/.style={},
                    tststarr/.style={},
                    darr/.style={dashed},
                    xscale=0.75
                ]
                    \coordinate (e) at (0,0);
                    \coordinate (s) at (-1,0);
                    \coordinate (t) at (1,0);
                    \coordinate (st) at (2,0);
                    \coordinate (ts) at (-2,0);
                    \coordinate (sts) at (-3,0);
                    \coordinate (tst) at (3,0);
                    \coordinate (stst) at (4,0);
                    \coordinate (tsts) at (-4,0);

                    \node[vertex] at (e) {};
                    \node[vertex] at (s) {};
                    \node[vertex] at (t) {};
                    \node[vertex] at (st) {};
                    \node[vertex] at (ts) {};
                    \node[vertex] at (sts) {};
                    \node[vertex] at (tst) {};
                    \node[vertex] at (stst) {};
                    \node[vertex] at (tsts) {};

                    \node[below] at (e) {$e$};
                    \node[above] at (s) {$s$};
                    \node[above] at (t) {$t$};
                    \node[above] at (st) {$st$};
                    \node[above] at (ts) {$ts$};
                    \node[above] at (sts) {$sts$};
                    \node[above] at (tst) {$tst$};
                    \node[above] at (stst) {$stst$};
                    \node[above] at (tsts) {$tsts$};

                    \draw[arr,tarr] (e) -- (t);
                    \draw[arr,tstarr] (e.north) to [out=30,in=150] (tst.north);

                    \draw[arr,sarr] (s) -- (e);
                    \draw[arr,tarr] (s.south) to [out=330,in=210] (st.south);
                    \draw[arr,tstarr] (s.north) to [out=30,in=150] (stst.north);

                    \draw[arr,tstarr] (t) -- (st);

                    \draw[arr,sarr] (ts.south) to [out=330,in=210] (t.south);
                    \draw[arr,tarr] (ts.south) to [out=330,in=210] (tst.south);
                    \draw[arr,stsarr] (ts) -- (s);

                    \draw[arr,sarr] (sts.south) to [out=330,in=210] (st.south);
                    \draw[arr,tarr] (sts.south) to [out=330,in=210] (stst.south);
                    \draw[arr,stsarr] (sts.north) to [out=30,in=150] (e.north);

                    \draw[arr,sarr] (tsts.south) to [out=330,in=210] (tst.south);
                    \draw[arr,stsarr] (tsts.north) to [out=30,in=150] (t.north);

                    \draw[arr,darr] (st) -- (tst);
                    \draw[arr,darr] (tst) -- (stst);
                    \draw[arr,darr] (sts) -- (ts);
                    \draw[arr,darr] (tsts) -- (sts);

        \end{tikzpicture}
    \end{center}
\end{example}

We next compile a list of lemmas and corollaries which are useful for twisted length functions.
We start with a comparison between the twisted length between two elements and the identity and an element.

\begin{lemma}
    \label{lem:shorten_len}
    For $A \subseteq T$ and $v,w \in W$, then $\ell_A(v,w) = \ell_{v \cdot A}(wv^{\inv})$.
\end{lemma}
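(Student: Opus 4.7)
The plan is to verify this identity directly by unwinding both sides and comparing them term by term. The right-hand side $\ell_{v \cdot A}(wv^{\inv})$ is shorthand for $\ell_{v \cdot A}(e, wv^{\inv})$. Applying the definition of the twisted length function with twisting set $B = v \cdot A$, first argument $e$, and second argument $u = wv^{\inv}$, I obtain
\[
    \ell_{v \cdot A}(wv^{\inv}) = \ell(u \cdot e^{\inv}) - 2\bigl|N(e \cdot u^{\inv}) \cap e \cdot B\bigr|.
\]
Substituting back $u = wv^{\inv}$ and $B = v \cdot A$, and using that $e$ acts trivially on $\mcP(T)$ (since $e \cdot B = N(e) \symdiff eBe^{\inv} = \varnothing \symdiff B = B$), this simplifies to $\ell(wv^{\inv}) - 2|N((wv^{\inv})^{\inv}) \cap v \cdot A| = \ell(wv^{\inv}) - 2|N(vw^{\inv}) \cap v \cdot A|$, which is precisely the definition of $\ell_A(v,w)$.

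So the proof is a short calculation: three lines, with the key ingredient being the triviality of the action of $e$ on subsets of $T$. There is no genuine obstacle — the main thing to check is just that the inverse in $(wv^{\inv})^{\inv} = vw^{\inv}$ lines up with the $N(vw^{\inv})$ appearing in the definition of $\ell_A(v,w)$, and that the twisting set $v \cdot A$ appears unchanged on both sides (rather than needing to be rewritten via a reflection cocycle identity).

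The importance of the lemma appears to be reductive rather than technical: it says that the ``binary'' twisted length $\ell_A(v,w)$ can always be recast as a ``unary'' twisted length $\ell_{B}(u)$ from the identity, after replacing $A$ with the twisted set $v \cdot A$ and the pair $(v,w)$ with the single element $wv^{\inv}$. This will presumably be useful later in the section when reducing arguments about twisted Bruhat paths starting at arbitrary vertices to arguments based at $e$.
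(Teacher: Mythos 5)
Your proposal is correct and is essentially identical to the paper's own proof: both sides are reduced to $\ell(wv^{\inv}) - 2\left|N(vw^{\inv}) \cap v \cdot A\right|$ by unwinding the definition, with the only (routine) observation being that $e$ acts trivially, so $e \cdot (v \cdot A) = v \cdot A$. The paper simply records this as a one-line computation, while you spell out the same substitutions in slightly more detail.
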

\begin{proof}
    \[
        \ell_{v \cdot A}(wv^{\inv}) = \ell(wv^{\inv}) - 2 \order{N(vw^{\inv}) \cap v\cdot A} = \ell_A(v,w).
    \]
\end{proof}

Just like the standard length function, the twisted length function is additive.
Therefore, knowing the twisted length according to $A$ from $u$ to $v$ and from $v$ to $w$, gives the twisted length from $u$ to $w$.

\begin{lemma}\cite[Prop 1.1]{Dyer_HeckeAlgebrasAndShellingsOfBruhatIntervalsIITWistedBruhatOrders}
    \label{lem:length_adds}
    For $A \subseteq T$ and $u,v,w \in W$, $\ell_A(u,v) + \ell_A(v,w) = \ell_A(u,w)$ and $\ell_A(u) + \ell_{u \cdot A}(v) = \ell_A(vu)$.
\end{lemma}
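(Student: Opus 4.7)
The plan is to reduce the first identity to the second using \autoref{lem:shorten_len}, then establish the second identity through a calculation involving the reflection cocycle property of $N$ and a short inclusion-exclusion check.

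For the reduction, \autoref{lem:shorten_len} expresses each three-argument length as a one-argument length with respect to a shifted set: $\ell_A(u,v) = \ell_{u \cdot A}(vu^{\inv})$, and analogously for the other two terms. Setting $B = u \cdot A$, $x = vu^{\inv}$, $y = wv^{\inv}$, one has $yx = wu^{\inv}$ and $v \cdot A = (xu) \cdot A = x \cdot B$ (because the reflection cocycle property makes $A \mapsto w \cdot A$ a genuine $W$-action on $\mcP(T)$). The first identity then collapses into the second.

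For the second identity, the strategy is to substitute $\ell_A(w) = \ell(w) - 2|N(w^{\inv}) \cap A|$ into all three terms and simplify the difference $\ell(u)+\ell(v)-\ell(vu)$ using the reflection cocycle $N(u^{\inv}v^{\inv}) = N(u^{\inv}) \symdiff u^{\inv}N(v^{\inv})u$ together with $|X \symdiff Y| = |X|+|Y|-2|X \cap Y|$. This yields $\ell(u)+\ell(v)-\ell(vu) = 2|N(u^{\inv}) \cap u^{\inv}N(v^{\inv})u|$. Next, the standard Coxeter identity $u^{\inv}N(u)u = N(u^{\inv})$ together with the definition of $u \cdot A$ gives $u^{\inv}(u \cdot A)u = N(u^{\inv}) \symdiff A$; since conjugation by $u$ preserves cardinalities, this rewrites $|N(v^{\inv}) \cap (u \cdot A)|$ as $|u^{\inv}N(v^{\inv})u \cap (N(u^{\inv}) \symdiff A)|$.

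Writing $X = N(u^{\inv})$ and $Y' = u^{\inv}N(v^{\inv})u$, so that $N(u^{\inv}v^{\inv}) = X \symdiff Y'$, the identity reduces to the purely set-theoretic claim
\[
    |X \cap Y'| = |X \cap A| + |Y' \cap (X \symdiff A)| - |(X \symdiff Y') \cap A|,
\]
which is verified at once by partitioning $T$ into the eight cells indexed by membership in $X$, $Y'$, $A$ and counting contributions to each side. The main obstacle is bookkeeping: the conjugation by $u$ in the middle term $|N(v^{\inv}) \cap (u \cdot A)|$ must be conjugated back by $u^{\inv}$ so that all three cardinalities are measured inside the same "frame"; once the identity $u^{\inv}N(u)u = N(u^{\inv})$ is used for this purpose, the final combinatorial check is routine.
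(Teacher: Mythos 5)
The paper does not actually prove this lemma: it is imported verbatim from Dyer \cite[Prop 1.1]{Dyer_HeckeAlgebrasAndShellingsOfBruhatIntervalsIITWistedBruhatOrders}, so there is no in-paper argument to compare against. Your proposal is a correct, self-contained verification from the paper's definitions, and the structure is sound. The reduction of the cocycle identity $\ell_A(u,v)+\ell_A(v,w)=\ell_A(u,w)$ to the one-variable statement via \autoref{lem:shorten_len} works exactly as you say, provided you note explicitly that $A \mapsto w\cdot A$ is a genuine $W$-action, i.e.\ $(w_1w_2)\cdot A = w_1\cdot(w_2\cdot A)$; this is a two-line consequence of the cocycle property $N(w_1w_2)=N(w_1)\symdiff w_1N(w_2)w_1^{\inv}$ together with associativity of $\symdiff$ and the fact that conjugation distributes over $\symdiff$, and it is worth writing out since the second identity is then applied with the shifted set $B = u\cdot A$ in place of $A$ (which is legitimate because your proof of the second identity is for arbitrary subsets of $T$). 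For the second identity, the chain of reductions is correct: $\ell(u)+\ell(v)-\ell(vu) = 2\order{N(u^{\inv})\cap u^{\inv}N(v^{\inv})u}$ from the cocycle applied to $u^{\inv}v^{\inv}$, the conjugation $u^{\inv}(u\cdot A)u = N(u^{\inv})\symdiff A$ via $u^{\inv}N(u)u = N(u^{\inv})$, and the final three-set counting identity $\order{X\cap Y'} = \order{X\cap A}+\order{Y'\cap(X\symdiff A)}-\order{(X\symdiff Y')\cap A}$, which checks out cell by cell. One small point to make explicit: $A$ may be infinite, but every intersection appearing in the computation lies inside the finite set $X\cup Y' = N(u^{\inv})\cup u^{\inv}N(v^{\inv})u$, so all cardinalities are finite and the inclusion--exclusion bookkeeping is valid. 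What your argument buys is independence from Dyer's paper, at the cost of a purely computational (rather than structural) proof; it would be a reasonable appendix-style verification if one wanted the present paper to be self-contained.
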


Since it is possible to add twisted lengths together, the twisted length from $v$ to $w$ are decomposable as the twisted length from the identity to $w$ with the twisted length from the identity to $v$ subtracted from it.
\begin{corollary}
    \label{cor:split_len}
    For $A \subseteq T$ and $v,w \in W$ we have $\ell_A(v,w) = \ell_A(w) - \ell_A(v)$.
\end{corollary}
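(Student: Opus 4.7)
The plan is to derive this corollary as an immediate specialization of the additivity property established in \autoref{lem:length_adds}. That lemma states that $\ell_A(u,v) + \ell_A(v,w) = \ell_A(u,w)$ for any $u, v, w \in W$, so the natural move is to set $u = e$, the identity of $W$.

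Concretely, I would apply \autoref{lem:length_adds} with $u = e$ to obtain
\[
    \ell_A(e,v) + \ell_A(v,w) = \ell_A(e,w).
\]
By the notational convention introduced right after the definition of $\ell_A$, namely $\ell_A(x) = \ell_A(e,x)$, the left-hand side is $\ell_A(v) + \ell_A(v,w)$ and the right-hand side is $\ell_A(w)$. Rearranging then yields $\ell_A(v,w) = \ell_A(w) - \ell_A(v)$, which is exactly the claim.

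Since this is a direct one-line consequence of the preceding lemma together with the abbreviation $\ell_A(w) = \ell_A(e,w)$, there is no real obstacle: the only thing to verify is that the shorthand is being used consistently. In particular, no appeal to \autoref{lem:shorten_len} is needed here, and the proof does not require $A$ to be an initial section.
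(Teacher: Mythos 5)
Your proof is correct, and it is a slightly different (and in fact more economical) route than the one in the paper. You apply the first identity of \autoref{lem:length_adds}, namely $\ell_A(u,v) + \ell_A(v,w) = \ell_A(u,w)$, with $u = e$, and then only use the convention $\ell_A(x) = \ell_A(e,x)$; this gives $\ell_A(v) + \ell_A(v,w) = \ell_A(w)$ immediately. The paper instead first invokes \autoref{lem:shorten_len} to rewrite $\ell_A(v,w) = \ell_{v\cdot A}(wv^{\inv})$ and then uses the second identity of \autoref{lem:length_adds}, $\ell_A(v) + \ell_{v\cdot A}(wv^{\inv}) = \ell_A(w)$. Both arguments are one-line consequences of the same cited lemma and hold for arbitrary $A \subseteq T$ (no initial-section hypothesis), so your observation that \autoref{lem:shorten_len} is not needed is accurate; the only thing your route forgoes is the explicit link between $\ell_A(v,w)$ and the twisted length of $wv^{\inv}$, which the paper's phrasing happens to exhibit along the way and reuses elsewhere (e.g.\ in \autoref{cor:refl_len} and \autoref{lem:edge_implies_length_increase}).
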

\begin{proof}
    By \autoref{lem:shorten_len} $\ell_A(v,w) = \ell_{v\cdot A}(wv^{\inv})$ which by \autoref{lem:length_adds} gives us the desired results.
\end{proof}

Another expected result is that the twisted length is anti-commutative, \ie the twisted length from $u$ to $v$ is the negative twisted length from $v$ to $u$.
\begin{corollary}
    \label{cor:minus_length}
    For $A \subseteq T$, $u,v \in W$ then $\ell_A(u,v) = -\ell_A(v,u)$.
\end{corollary}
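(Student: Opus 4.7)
The plan is essentially a one-line application of the previous corollary. By \autoref{cor:split_len} I can rewrite both sides of the claimed identity purely in terms of the single-argument twisted length function $\ell_A(\cdot)$. Specifically, $\ell_A(u,v)$ unfolds to $\ell_A(v) - \ell_A(u)$, while $\ell_A(v,u)$ unfolds to $\ell_A(u) - \ell_A(v)$, and these are manifestly negatives of each other.

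So the proof I would write is: apply \autoref{cor:split_len} to get
\[
    \ell_A(u,v) = \ell_A(v) - \ell_A(u) = -\bigl(\ell_A(u) - \ell_A(v)\bigr) = -\ell_A(v,u),
\]
and we are done. No case analysis, no unpacking the definition of $\ell_A$ in terms of inversion sets, and no reference to initial sections is needed, since \autoref{cor:split_len} holds for arbitrary $A \subseteq T$.

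There is no real obstacle here; the only thing to be careful about is that I am applying \autoref{cor:split_len} as a black box rather than peeling $\ell_A$ back to its definition $\ell(wv^{\inv}) - 2|N(vw^{\inv}) \cap v \cdot A|$ and verifying anti-commutativity by hand (which would require checking that $v \cdot A$ and $u \cdot A$ interact symmetrically with $N(uv^{\inv})$ and $N(vu^{\inv})$). Going through \autoref{cor:split_len} sidesteps that computation entirely, which is presumably why the author lists this result as a corollary rather than a lemma.
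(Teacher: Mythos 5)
Your proof is correct and amounts to the same argument as the paper's: the paper sets $w = u$ in \autoref{lem:length_adds} and uses $\ell_A(u,u) = 0$, while you route through \autoref{cor:split_len}, which is itself an immediate consequence of that additivity lemma, so both are one-line applications of the same fact. Your use of \autoref{cor:split_len} is legitimate here since it is established before this corollary and holds for arbitrary $A \subseteq T$.
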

\begin{proof}
    Letting $w = u$ in \autoref{lem:length_adds} and noting $\ell_A(u,u) = 0$ gives us the desired result.
\end{proof}

This corollary boils down to a nice property on reflections in $T$ since every reflection is its own inverse.
\begin{corollary}
    \label{cor:refl_len}
    For $A \subseteq T$ and $t \in T$, $\ell_{t\cdot A}(t) = -\ell_A(t)$.
\end{corollary}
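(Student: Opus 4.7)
The plan is to chain together the two preceding results on twisted length with the defining property of reflections, namely $t^{-1} = t$.

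First I would rewrite the left-hand side using only the ``length from the identity'' notation by observing that $\ell_{t \cdot A}(t) = \ell_{t \cdot A}(e, t)$ by definition. The key is to relate this to a length computed with respect to $A$ rather than $t \cdot A$. For this, I apply \autoref{lem:shorten_len} in reverse: since $\ell_A(v, w) = \ell_{v \cdot A}(w v^{-1})$, setting $v = t$ and $w = e$ gives $\ell_A(t, e) = \ell_{t \cdot A}(e \cdot t^{-1}) = \ell_{t \cdot A}(t^{-1})$. Because $t \in T$ is a reflection and hence involutive, $t^{-1} = t$, so $\ell_A(t, e) = \ell_{t \cdot A}(t)$.

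Second, I invoke \autoref{cor:minus_length} with $u = e$ and $v = t$ to get $\ell_A(e, t) = -\ell_A(t, e)$, and unfold the left-hand side by definition to $\ell_A(t) = -\ell_A(t, e)$. Combining this with the identity from the previous paragraph yields $\ell_A(t) = -\ell_{t \cdot A}(t)$, which is equivalent to the desired statement.

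There isn't really a main obstacle here: the corollary is essentially just the two preceding corollaries specialized to $u = e$, $v = t$, with the involutive property of reflections used to identify $t^{-1}$ with $t$ inside the length formula. If anything, the only place to be careful is to make sure that the $v \cdot A$ appearing in \autoref{lem:shorten_len} matches the $t \cdot A$ in the statement, which it does exactly when $v = t$.
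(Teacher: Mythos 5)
Your argument is correct and is essentially identical to the paper's proof: both apply \autoref{lem:shorten_len} with $v = t$, $w = e$ (using $t^{-1} = t$) to get $\ell_{t\cdot A}(t) = \ell_A(t,e)$, and then conclude with \autoref{cor:minus_length}. No issues.
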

\begin{proof}
    By \autoref{lem:shorten_len} $\ell_{t \cdot A}(t) = \ell_A(t,e)$ which by \autoref{cor:minus_length} gives us the desired result.
\end{proof}

We next focus our attention to initial sections to get more information on reflections through the twisted length function.
Taking an element $w\in W$ and a reflection $t\in T$ then if the twisted length for an initial section $A$ increases from $w$ to $tw$ then the root corresponding to $t$ is not contained in the twisted inversion set $\twistedInv{A}{w}$.
\begin{lemma}\cite[Prop 1.2]{Dyer_HeckeAlgebrasAndShellingsOfBruhatIntervalsIITWistedBruhatOrders}
    \label{lem:length_vs_action}
    Let $A \in \initSects$, $w \in W$ and $t \in T$. Then $\ell_A(w,tw) > 0$ if and only if $t \notin w\cdot A$.
    Furthermore, $t \in T \bs A$ implies that $\ell_A(t) > 0$.
\end{lemma}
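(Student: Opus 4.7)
The plan is to reduce $\ell_A(w, tw)$ to a simpler form via the definition, then identify its sign with a structural property of initial sections. The direct computation $(tw)w^{\inv} = t$ and $w(tw)^{\inv} = t$ gives
\[
    \ell_A(w, tw) = \ell(t) - 2\lvert N(t) \cap (w\cdot A)\rvert,
\]
so the first statement of the lemma is equivalent to the following combinatorial dichotomy: for any initial section $B \subseteq T$ (taken as $B = w \cdot A$) and any reflection $t$ with $\ell(t) = 2k+1$, one has $\lvert N(t) \cap B\rvert \leq k$ when $t \notin B$ and $\lvert N(t) \cap B\rvert \geq k+1$ when $t \in B$.

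To prove this dichotomy, I would exhibit $t$ as the median of $N(t)$ under the restricted reflection order. The key tool is the involution $r \mapsto trt$ on $N(t) \bs \{t\}$: it is well-defined since $\ell(rt) = \ell(tr)$, and it is fixed-point-free because any $r$ commuting with $t$ would have its root orthogonal to $\alpha_t$, forcing $r \notin N(t)$. In the dihedral reflection subgroup $\gen{r, t}$, the assumption $r \in N(t) \bs \{t\}$ gives $N(t) \cap \gen{r, t} \supsetneq \{t\}$, so $t$ is not a Coxeter generator of $\gen{r, t}$. The reflection-order axiom, applied to $\gen{r, t}$, then places $t$ strictly between $r$ and $trt$ in the order restricted to $\gen{r, t}$. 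Consequently exactly $k$ elements of $N(t) \bs \{t\}$ lie below $t$ and $k$ lie above in the restricted order on $N(t)$; since $B \cap N(t)$ is downward-closed in this restricted order, the dichotomy follows. The argument also invokes the auxiliary fact that $w \cdot A$ is itself an initial section whenever $A$ is, a standard consequence of the compatibility of twisted reflection orders with the twisted Bruhat graph.

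The ``furthermore'' clause is the case $w = e$ of the first part: since $e \cdot A = N(e) \symdiff eAe^{\inv} = A$, the hypothesis $t \in T \bs A$ becomes $t \notin e \cdot A$, so the first part yields $\ell_A(t) = \ell_A(e, t) > 0$.

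The principal obstacle is the median argument: carefully applying the reflection-order axiom inside $\gen{r, t}$ to place $t$ strictly between $r$ and $trt$, together with establishing that $w \cdot A$ remains an initial section when $A$ is. Both rely on standard but nontrivial results about reflection orders, with the latter being the bridge that allows the combinatorial dichotomy for initial sections to be applied to the twisted inversion set $w \cdot A$.
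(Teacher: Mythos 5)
The paper offers no proof of this lemma at all: it is imported verbatim from Dyer (Prop.\ 1.2 of the cited paper), so there is no in-paper argument to compare against and your reconstruction has to stand on its own. Its skeleton is sound: the computation $\ell_A(w,tw)=\ell(t)-2\order{N(t)\cap (w\cdot A)}$ is correct, the involution $r\mapsto trt$ on $N(t)\bs\{t\}$ is well defined and fixed-point free for the reasons you give, the deduction of the dichotomy from the median property plus the downward-closedness of an initial section is valid, and the reduction of the ``furthermore'' clause to the case $w=e$ is fine.

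However, the two steps you yourself label ``the principal obstacle'' are exactly where the content of the lemma lives, and as written they are asserted rather than proved. First, the median claim: from ``$t$ is not a canonical generator of $\gen{r,t}$'' plus the reflection-order axiom you conclude that $t$ lies strictly between $r$ and $trt$. That inference is not valid on its own: the axiom only says the restriction of the order to $\gen{r,t}\cap T$ is the canonical chain or its reverse, and knowing $t$ is interior to that chain does not place $r$ and $trt$ on opposite sides of $t$ — conjugation by $t$ reflects chain positions, and for a reflection $r'\in\gen{r,t}\bs N(t)$ lying far enough along the chain, $r'$ and $tr't$ sit on the same side of $t$. What is actually needed is the rank-two computation that $N(t)\cap\gen{r,t}$ equals the inversion set of $t$ computed inside $(\gen{r,t},\chi(\gen{r,t}))$ (Dyer's restriction result for reflection subgroups), that this inversion set is the symmetric block of the chain centred at $t$, and that conjugation by $t$ reflects this block; only with $r\in N(t)\bs\{t\}$ fed through that computation do $r$ and $trt$ straddle $t$. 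Second, you invoke the stability of $\initSects$ under the twisted action $A\mapsto w\cdot A=N(w)\symdiff wAw^{\inv}$ as ``standard''. It is a genuine theorem of Dyer of depth comparable to the lemma itself, developed in the same paper alongside statements like the present one, so borrowing it wholesale risks circularity. A cleaner patch is to prove your dichotomy for the larger class of sets whose intersection with every dihedral reflection subgroup is a prefix or suffix of its chain (biclosed sets), and verify — a rank-two cocycle check — that this class is preserved by the twisted action; initial sections are easily seen to be biclosed. Until these two steps are carried out, the proposal is a correct plan but not yet a proof.
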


Since working with a length function is generally easier we give a condition on an edge $(u,v)$ being in the twisted Bruhat graph using the twisted length function.
In fact, it is exactly the condition we would expect to have: being an edge in the twisted Bruhat graph implies that the twisted length must be increasing.
This replicates the same behavior found in the standard Bruhat graph.
\begin{lemma}
    \label{lem:edge_implies_length_increase}
    For $A \in \initSects$ and  $u, v \in W$ such that $u = tv$ for some $t \in T$, then $(u,v) \in E_{(W,A)}$ implies $\ell_A(u,v) > 0$.
\end{lemma}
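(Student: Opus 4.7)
The plan is to assemble the result directly from two prior facts already recorded in the section: \autoref{prop:change_arrows}, which characterizes edges of the twisted Bruhat graph via the twisted inversion set, and \autoref{lem:length_vs_action}, which relates the twisted inversion set to the sign of the twisted length on reflection neighbors.

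First I would unpack the hypothesis $u = tv$ with $t \in T$ as $v = tu$, so the edge $(u,v)$ is the one produced by left multiplication by $t$. To match the notation of \autoref{prop:change_arrows} I would write $v = u t'' = tu$, where $t'' = u^{-1} t u \in T$ is the corresponding right reflection. In that notation \autoref{prop:change_arrows} says precisely that $(u,v) \in E_{(W,A)}$ if and only if $t \notin u \cdot A$.

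Next, since $A$ is assumed to be an initial section, I can invoke \autoref{lem:length_vs_action} with the element $u$ and the reflection $t$: it states that $\ell_A(u, tu) > 0$ if and only if $t \notin u \cdot A$. Combining these two equivalences, the hypothesis $(u,v) \in E_{(W,A)}$ forces $t \notin u \cdot A$, which in turn forces $\ell_A(u,v) = \ell_A(u,tu) > 0$, as desired.

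There is essentially no obstacle beyond keeping the left/right reflection bookkeeping straight; the argument is a one-line synthesis of \autoref{prop:change_arrows} and \autoref{lem:length_vs_action}, and in particular it is exactly at this step that the initial-section hypothesis is needed, since \autoref{lem:length_vs_action} requires $A \in \initSects$ to guarantee that the sign of $\ell_A$ is controlled by membership in $u \cdot A$.
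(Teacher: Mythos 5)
Your argument is correct and follows essentially the same route as the paper's proof: both translate membership of $(u,v)$ in $E_{(W,A)}$ into the statement $t \notin u \cdot A$ for the left reflection $t$, and then invoke \autoref{lem:length_vs_action} to conclude positivity of the twisted length. The only (harmless) difference is bookkeeping: you cite \autoref{prop:change_arrows} together with the biconditional part of \autoref{lem:length_vs_action} applied at the base point $u$, whereas the paper reads the edge condition off \autoref{eq:twisted-graph}, converts $t \in v\cdot A$ to $t \notin u\cdot A$ via \autoref{lem:a_notin_wA_iff_a_in_twA}, and then combines the ``furthermore'' part of \autoref{lem:length_vs_action} with \autoref{lem:shorten_len}.
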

\begin{proof}
    For $(u,v) \in E_{(W,A)}$ by definition (see \autoref{eq:twisted-graph}) there exists a $t \in T$ such that $u = tv$ and $t \in v \cdot A$.
    By \autoref{lem:a_notin_wA_iff_a_in_twA} $t \notin tv\cdot A$, or in other words $t \in T \bs \left(tv\cdot A\right) = T \bs \left(u \cdot A\right)$.
    By \autoref{lem:length_vs_action} $\ell_{u \cdot A}(t) > 0$.
    Then by \autoref{lem:shorten_len} $\ell_A(u, v) > 0$ as desired.
\end{proof}

As in the standard Bruhat graph, another interpretation of this is that if $u$ is ``below'' $v$ in the twisted Bruhat order then the twisted length of $u$ must be less than the twisted length of $v$.
\begin{corollary}
    \label{cor:order_follows_length}
    For $u, v \in W$ and $A \in \initSects$ if $u \leqA v$ then $\ell_A(u) \leq \ell_A(v)$.
\end{corollary}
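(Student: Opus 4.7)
The plan is to unfold the definition of $u \leqA v$ to obtain a directed path in the twisted Bruhat graph $\Omega_{(W,A)}$ from $u$ to $v$, and then to use the two previously established local facts to turn each edge of the path into a strict increase of the twisted length. Telescoping these increases will then give the required inequality $\ell_A(u) \leq \ell_A(v)$.

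More concretely, I would start by invoking the definition of $\leqA$: since $u \leqA v$, there exists $n \in \N$ and a tuple $(w_0, w_1, \ldots, w_n) \in C_n^A(u,v)$, so that $w_0 = u$, $w_n = v$, and $(w_{i-1}, w_i) \in E_{(W,A)}$ for each $i \in \{1, \ldots, n\}$. For each such edge, \autoref{lem:edge_implies_length_increase} yields $\ell_A(w_{i-1}, w_i) > 0$, and \autoref{cor:split_len} rewrites this as
\[
    \ell_A(w_i) - \ell_A(w_{i-1}) \;=\; \ell_A(w_{i-1}, w_i) \;>\; 0.
\]
Hence $\ell_A(w_{i-1}) < \ell_A(w_i)$ for every $i$.

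Telescoping, if $n \geq 1$ we obtain $\ell_A(u) = \ell_A(w_0) < \ell_A(w_1) < \cdots < \ell_A(w_n) = \ell_A(v)$, which is in fact slightly stronger than needed. If instead $n = 0$, then $w_0 = w_n$ forces $u = v$, and trivially $\ell_A(u) = \ell_A(v)$. In either case $\ell_A(u) \leq \ell_A(v)$, completing the argument.

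There is essentially no obstacle: the corollary falls out as a direct combination of \autoref{lem:edge_implies_length_increase} (positivity of the twisted length along a single edge) and \autoref{cor:split_len} (additivity, turning the edge-length into a difference of the base-point twisted lengths). The only minor bookkeeping point is that the statement is $\leq$ rather than $<$; this is handled by separating off the trivial $n=0$ case in which $u=v$.
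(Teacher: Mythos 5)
Your proposal is correct and follows essentially the same route as the paper: both handle the trivial $n=0$ case, use \autoref{lem:edge_implies_length_increase} to get strict positivity of $\ell_A$ along each edge of the path, and conclude via additivity (\autoref{lem:length_adds}/\autoref{cor:split_len}); telescoping differences per edge versus summing the edge-lengths once is an immaterial rearrangement.
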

\begin{proof}
    Recall that by definition $u \leq_A v$ if and only if $C_n^A(u,v) \neq \varnothing$ for some $n \in \mathbb{N}$.
    In other words, there exists $(w_0, w_1, \ldots, w_n)$ such that $w_0 = u$, $w_n = v$ and $(w_i, w_{i+1}) \in E_{(W,A)}$.
    If $n = 0$, then $u = v$ and $\ell_A(u) = \ell_A(v)$ as desired.
    Else recall that $(w_i, w_{i+1}) \in E_{(W,A)}$ implies that $\ell_A(w_i, w_{i+1}) > 0$ by \autoref{lem:edge_implies_length_increase}.
    But by \autoref{lem:length_adds} we have
    \[
        \ell_A(u,v) = \ell_A(w_0,w_1) + \ell_A(w_1,w_2) + \cdots + \ell_A(w_{n-1},w_n)
    \]
    and since each component is positive we have $\ell_A(u,v) > 0$.
    Finally, by \autoref{cor:split_len} this implies $\ell_A(v) - \ell_A(u) > 0$ giving us the corollary.
\end{proof}

\subsection{Initial sections and Bruhat preclosure}
We finally put together the previous results to show that for an initial section $A \in \initSects$ then $\bclosure{A} = A$.
For this we will show many equivalence lemmas that, in the end, will give us the desired result.
We first begin by showing what it means for a reflection to be in $A$ with regard to twisted length.
Note that taking a root in an initial section will contribute to decreasing the twisted length.
This implies that the twisted length of this reflection must be below the identity, whose length never changes.
This is a direct result of \autoref{lem:length_vs_action} with $w = e$.
\begin{lemma}
    \label{lem:root_imp_len_lt_zero}
    For $t \in T$ and $A \in \initSects$ then $t \in A$ if and only if $\ell_A(t) < 0$.
\end{lemma}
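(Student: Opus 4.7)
The plan is to apply \autoref{lem:length_vs_action} directly with $w = e$, and then tighten the resulting nonstrict inequality to a strict one using a parity argument on reflection lengths.

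First I would recall from the definitions that $\ell_A(t) = \ell_A(e, t)$ and that $e \cdot A = N(e) \symdiff eAe^{\inv} = A$. Then \autoref{lem:length_vs_action} with $w = e$ reads: $\ell_A(e, t) > 0$ if and only if $t \notin e \cdot A$, which after the identifications above is exactly the statement that $\ell_A(t) > 0$ if and only if $t \notin A$. Contrapositively, $t \in A$ if and only if $\ell_A(t) \leq 0$.

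To upgrade the nonstrict inequality to a strict one, I would observe that any reflection $t \in T$ has odd length: indeed, a standard fact about Coxeter groups (a consequence of the strong exchange property) is that $\ell(t)$ is odd for every $t \in T$, since $t$ is conjugate to a simple reflection and conjugation by $w$ adjusts length by $2\,\ell(w)$ in parity. Then from the definition
\[
    \ell_A(t) = \ell(t) - 2\,\bigl|N(t^{\inv}) \cap A\bigr|,
\]
the right-hand side is the difference of an odd integer and an even integer, hence odd. In particular $\ell_A(t) \neq 0$, so $\ell_A(t) \leq 0$ is equivalent to $\ell_A(t) < 0$.

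Combining the two steps gives the equivalence $t \in A \iff \ell_A(t) < 0$. I do not anticipate any real obstacle here: the substance of the result is already contained in \autoref{lem:length_vs_action}, and the only subtlety is justifying why the inequality is strict, which the parity observation handles immediately.
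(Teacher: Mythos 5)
Your proof is correct and takes essentially the same route as the paper, which likewise treats the lemma as a direct consequence of \autoref{lem:length_vs_action} with $w = e$ (using $e \cdot A = A$). Your parity argument usefully makes explicit why $\ell_A(t) \leq 0$ can be sharpened to $\ell_A(t) < 0$, a point the paper leaves implicit; the only nitpick is that the oddness of $\ell(t)$ for $t \in T$ follows from the sign character (length is additive mod $2$, so $\ell(wsw^{\inv}) \equiv 2\ell(w) + 1$) rather than from the strong exchange property.
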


In fact this lemma gives the reverse direction of \autoref{cor:order_follows_length} in the case of reflections when compared with the identity element.
In particular if a reflection has negative twisted length then it must be below the identity element in the twisted Bruhat order.
This gives us a quick way to determine if reflections are below or above the identity in the twisted Bruhat order.
\begin{lemma}
    \label{lem:length_iff_bruhat}
    For $t \in T$ and $A \in \initSects$, $\ell_A(t) < 0$ if and only if $t \leqA e$.
\end{lemma}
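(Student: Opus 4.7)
The plan is to prove the two directions separately, using Lemma~\ref{lem:root_imp_len_lt_zero} (which identifies $\ell_A(t) < 0$ with $t \in A$) as the pivot for the forward direction and Corollary~\ref{cor:order_follows_length} together with Lemma~\ref{lem:edge_implies_length_increase} for the reverse direction.

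For the forward direction, assume $\ell_A(t) < 0$. By the immediately preceding Lemma~\ref{lem:root_imp_len_lt_zero} this is equivalent to $t \in A$. I want to produce a single edge from $t$ to $e$ in the twisted Bruhat graph, since this already witnesses $t \leqA e$ (take the chain $(t, e) \in C_1^A(t,e)$). Using the second description of the edge set in \eqref{eq:twisted-graph}, namely $E_{(W,A)} = \{(tw, w) : w \in W,\, t \in w \cdot A\}$, and specializing to $w = e$ with $e \cdot A = A$, the hypothesis $t \in A$ directly gives $(t, e) \in E_{(W,A)}$, hence $t \leqA e$.

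For the converse, assume $t \leqA e$. By definition there is $n \in \N$ and a chain $(w_0, w_1, \ldots, w_n) \in C_n^A(t, e)$ with $w_0 = t$ and $w_n = e$. Since $t$ is a reflection we have $t \neq e$, so $n \geq 1$. By Lemma~\ref{lem:edge_implies_length_increase}, each edge contributes a strictly positive twisted length increment $\ell_A(w_{i-1}, w_i) > 0$. Summing via Lemma~\ref{lem:length_adds} gives $\ell_A(t, e) > 0$, and Corollary~\ref{cor:split_len} then yields $\ell_A(e) - \ell_A(t) > 0$. Since $\ell_A(e) = 0$, this concludes $\ell_A(t) < 0$.

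There is no real obstacle here: the lemma is essentially a bookkeeping consequence of the preceding results, and the only subtle point to flag is that one must invoke the fact that $t \neq e$ (because $t \in T$) to upgrade the inequality from $\ell_A(t) \leq 0$ (which would follow directly from Corollary~\ref{cor:order_follows_length}) to the strict inequality $\ell_A(t) < 0$, by ensuring that the witnessing chain has at least one edge.
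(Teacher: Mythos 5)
Your proof is correct and follows essentially the same route as the paper: the forward direction uses \autoref{lem:root_imp_len_lt_zero} together with the edge $(t,e)$ coming from \autoref{eq:twisted-graph} at $w=e$, and the reverse direction is exactly the argument underlying \autoref{cor:order_follows_length}. You are in fact slightly more careful than the paper, which cites that corollary (stated with a weak inequality) to conclude a strict one; your remark that $t \in T$ forces $t \neq e$, so the witnessing chain has at least one edge and \autoref{lem:edge_implies_length_increase} applies, is precisely the justification needed for strictness.
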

\begin{proof}
    If $t \leqA e$ then by \autoref{cor:order_follows_length}, $\ell_A(t) < \ell_A(e) = 0$ as desired.
    Suppose next that $\ell_A(t) < 0$.
    By \autoref{lem:root_imp_len_lt_zero} $\ell_A(t) < 0$ if and only if $t \in A$.
    Since $t \in e \cdot A$ we have $(t,e) \in E_{(W,A)}$.
    In particular, this implies $C_1^A(t,e) \neq \varnothing$ and $t \leqA e$ as desired.
\end{proof}

For the final lemma we show the only reflections that can be reached through $A$-paths starting from the identity are the reflections below the identity.
\begin{lemma}
    \label{lem:closure_twisted}
    For $t \in T$ and $A \in \initSects$ we have $t \leqA e$ if and only if $t \in \bclosure{A}$.
\end{lemma}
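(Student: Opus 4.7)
The plan is to unwind both sides of the claimed equivalence into the same third condition, namely $t \in A$, by chaining together the characterizations already established.

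First, I would establish the implication $t \in \bclosure{A} \Longrightarrow t \leqA e$. By definition, $t \in \bclosure{A}$ means there is a right $A$-path $e = w_0 \xrightarrow{t_1} w_1 \xrightarrow{t_2} \cdots \xrightarrow{t_n} w_n = t$ in $\Omega$ with each $t_i \in A$. By the remark in \autoref{ssec:twisted-bruhat-graphs} (equivalently, by direct inspection of the edge set in \autoref{eq:twisted-graph}), each edge $w_{i-1} \xrightarrow{t_i} w_i$ with $t_i \in A$ gets reversed to a directed edge $w_i \to w_{i-1}$ in $\Omega_{(W,A)}$. Reversing the sequence therefore produces an element of $C_n^A(t,e)$, witnessing $t \leqA e$.

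Next, I would deduce $t \in A$ from $t \leqA e$. This is exactly the content of the preceding two lemmas: \autoref{lem:length_iff_bruhat} gives $\ell_A(t) < 0$, and then \autoref{lem:root_imp_len_lt_zero} converts this strict inequality into the membership $t \in A$.

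Finally, the implication $t \in A \Longrightarrow t \in \bclosure{A}$ is immediate: since $t$ is a reflection we have $\ell(e) = 0 < 1 \leq \ell(t)$, so the single edge $e \xrightarrow{t} t$ is already a right $A$-path from $e$ to $t$, placing $t$ in $\bclosure{A}$ by definition. Chaining the three implications closes the loop and gives the desired equivalence.

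There is no real obstacle here: all the substantive work has been absorbed into the preceding twisted-length lemmas, and the present statement amounts to knotting them together with the dictionary between right $A$-paths in $\Omega$ and directed paths in $\Omega_{(W,A)}$. The only bookkeeping point to be careful about is that the converse reading of \autoref{lem:length_iff_bruhat} (the step from $t \leqA e$ to $\ell_A(t) < 0$) needs the strict inequality; this comes for free because $t \neq e$, so any witnessing chain has length $n \geq 1$ and \autoref{lem:edge_implies_length_increase} forces the length drop to be strict.
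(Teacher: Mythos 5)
Your proof is correct and follows essentially the same route as the paper: one direction chains \autoref{lem:length_iff_bruhat} and \autoref{lem:root_imp_len_lt_zero} to get $t \leqA e \Rightarrow t \in A \subseteq \bclosure{A}$, and the other converts a right $A$-path from $e$ to $t$ into a reversed path in $\Omega_{(W,A)}$ witnessing $C_n^A(t,e) \neq \varnothing$. The extra remarks (the explicit edge $e \xrightarrow{t} t$ for $t \in A$, and the strictness point, which is really internal to \autoref{lem:length_iff_bruhat}) are fine but not needed beyond what the paper does.
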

\begin{proof}
    Note that if $t \leqA e$ then by the previous two lemmas $t \in A$ and therefore $t \in \bclosure{A}$.
    Now suppose $t \in \bclosure{A}$.
    Then there exists an $A$-path from $e$ to $t$ in the Bruhat graph labelled by edges in $A$:
    \[
        e \xrightarrow{t_1} t_1 \xrightarrow{t_2} t_1 t_2 \xrightarrow{t_3} \ldots \xrightarrow{t_n} t_1 t_2 \ldots t_n = t.
    \]
    In other words $(t_1t_2 \ldots t_{i-1}, t_1 t_2 \ldots t_i) \in E$, and $\ell(t_1 t_2 \ldots t_{i-1}) < \ell(t_1 t_2 \ldots t_i)$ for all $i \in [n]$.
    Furthermore, since $t_i \in A$, this implies $(t_1 t_2 \ldots t_i, t_1 t_2 \ldots t_{i-1}) \in E_{(W,A)}$.
    In other words, $C_n^A(t,e) \neq \varnothing$.
    Therefore, $t \leqA e$.
\end{proof}

Putting together all the previous lemmas gives us the desired result.
\begin{theorem}
    \label{thm:Bruhat_is_closure}
    For $t \in T$ and $A \in \initSects$, $t \in \bclosure{A}$ if and only if $t \in A$.
\end{theorem}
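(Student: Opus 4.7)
The plan is to observe that this theorem is essentially an assembly of the three equivalences proved immediately beforehand; there is almost no new work to do. The forward implication $t \in A \Rightarrow t \in \bclosure{A}$ is immediate, since $\bclosure{}$ is a preclosure and hence extensive: $A \subseteq \bclosure{A}$ by definition.

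For the reverse direction, I would simply chain the three preceding equivalences. Starting from $t \in \bclosure{A}$, by \autoref{lem:closure_twisted} this is equivalent to $t \leqA e$; by \autoref{lem:length_iff_bruhat} this is equivalent to $\ell_A(t) < 0$; and by \autoref{lem:root_imp_len_lt_zero} this is equivalent to $t \in A$. Composing the three ``if and only if'' statements yields $t \in \bclosure{A} \Leftrightarrow t \in A$, which is exactly the desired conclusion. One can equally well phrase this as a two-line proof, or even simply remark that the theorem is a direct consequence of combining \autoref{lem:root_imp_len_lt_zero}, \autoref{lem:length_iff_bruhat}, and \autoref{lem:closure_twisted}.

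There is no real obstacle at this stage: the entire content of the theorem has been distributed across the three preparatory lemmas. The genuinely nontrivial step was \autoref{lem:closure_twisted}, where one translates between right $A$-paths in the Bruhat graph starting at $e$ and chains in the twisted Bruhat graph ending at $e$, using the fact that an edge $u \xrightarrow{t} ut$ in $\Omega_A$ (with $t \in A$) becomes the reversed edge $(ut, u) \in E_{(W,A)}$. Since that translation has already been established, the proof of the theorem itself is purely a formal synthesis.
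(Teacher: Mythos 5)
Your proposal is correct and matches the paper's own argument, which likewise obtains the theorem by chaining \autoref{lem:root_imp_len_lt_zero}, \autoref{lem:length_iff_bruhat}, and \autoref{lem:closure_twisted} (the paper's proof is just the remark ``putting together all the previous lemmas gives us the desired result''). Your extra observation that the forward direction already follows from extensivity of the preclosure is harmless but redundant, since the chain of equivalences gives both directions at once.
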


\begin{remark}
    Note that this implies the Bruhat preclosure is nothing more than the identity operator on initial sections.
    Also, since inversion sets are finite initial sections, then for all $w \in W$ we have $N(w) = \bclosure{N(w)}$ for all Coxeter groups, not just finite ones.
\end{remark}
Additionally, we note that the above theorem was (independently) already known to Matthew Dyer.
Through personal communication \cite{Dyer_Communication}, Dyer mentioned that he provided a sketch of his proof at the ``Workshop on Bruhat order: recent developments and open problems''.
We were not at this workshop and therefore do not know of the specific techniques used in Dyer's proof sketch, but
through private communication with Dyer \cite{Dyer_Communication}, the above proof uses different techniques than those he used during the workshop and is the first known published proof of these results.

\section{Preclosure extensions}
\label{sec:preclosure-extensions}
In this section we discuss how to turn our preclosure into a closure using a particular extension.
It will turn out that this extension gives us a weaker version of the results we desire.
Although the definition of finite type is well-known, we could not find proofs of the following two lemmas, although we assume it is already in the literature.
We thus provide proofs for completeness.

Let $Z$ be a set and $\cl: \mcP(Z) \to \mcP(Z)$ a preclosure operator.
We say that the preclosure $\cl$ is \definition{of finite type} if $\cl(X) = \bigcup\set{\cl(Y)}{Y \subseteq X \text{ and } \order{Y} \in \N}$ for $X \subseteq Z$.
Let $\cl^{i}(X) = \cl(\cl(\cl(\cdots \cl(X))))$ where $\cl$ is performed $i$ times.
Furthermore, let $\cl^{\infty}(X) = \cup_{i \in \N} \cl^{i}(X)$.
It turns out that for a preclosure $\cl$ of finite type, then $\cl^{\infty}$ is always a closure operator.
To show this, we first give the following lemma:
\begin{lemma}
    \label{lem:closure_is_comm}
    Let $\cl: \mcP(Z) \to \mcP(Z)$ be a preclosure operator of finite type for a set $Z$.
    Then we have $\cl^{\infty} \circ \cl = \cl \circ \cl^{\infty} = \cl^{\infty}$.
\end{lemma}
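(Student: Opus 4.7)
The plan is to prove the two equalities separately, since one is essentially bookkeeping and the other is where the finite-type hypothesis does work.

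For the equality $\cl^{\infty} \circ \cl = \cl^{\infty}$, I would just reindex. By definition $\cl^{\infty}(\cl(X)) = \bigcup_{i \in \N} \cl^{i}(\cl(X)) = \bigcup_{i \in \N} \cl^{i+1}(X)$. Since $\cl$ is a preclosure we have $X \subseteq \cl(X)$, and inductively $\cl^{i}(X) \subseteq \cl^{i+1}(X)$ (apply the preclosure inclusion to $\cl^{i}(X)$). Hence the chain $X \subseteq \cl(X) \subseteq \cl^{2}(X) \subseteq \cdots$ is increasing and dropping the first term does not change the union, so $\bigcup_{i \in \N} \cl^{i+1}(X) = \bigcup_{i \in \N} \cl^{i}(X) = \cl^{\infty}(X)$.

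For the equality $\cl \circ \cl^{\infty} = \cl^{\infty}$, the inclusion $\cl^{\infty}(X) \subseteq \cl(\cl^{\infty}(X))$ is immediate from the preclosure property applied to the set $\cl^{\infty}(X)$. The interesting direction is $\cl(\cl^{\infty}(X)) \subseteq \cl^{\infty}(X)$, and this is where I would use the finite-type hypothesis. Take any $z \in \cl(\cl^{\infty}(X))$. By finite type, there is a finite subset $Y \subseteq \cl^{\infty}(X)$ with $z \in \cl(Y)$. Now I use the key observation from the previous paragraph: the sequence $\cl^{0}(X) \subseteq \cl^{1}(X) \subseteq \cl^{2}(X) \subseteq \cdots$ is a nested increasing chain whose union is $\cl^{\infty}(X)$. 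Because $Y$ is finite, each of its finitely many elements lies in some $\cl^{i}(X)$, and taking the maximum of those finitely many indices gives a single $N \in \N$ with $Y \subseteq \cl^{N}(X)$.

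Finally, I apply monotonicity of the preclosure to conclude $z \in \cl(Y) \subseteq \cl(\cl^{N}(X)) = \cl^{N+1}(X) \subseteq \cl^{\infty}(X)$, which completes the argument. The only step that is not essentially formal is the reduction to a finite subset, so the main (mild) obstacle is recognizing that finite type is exactly what lets us replace the infinite union $\cl^{\infty}(X)$ by a single level $\cl^{N}(X)$ inside the outer application of $\cl$; without finite type this argument breaks because there would be no reason a witness $Y$ can be taken finite and hence no reason it sits in a single $\cl^{N}(X)$.
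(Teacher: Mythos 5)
Your proof is correct and follows essentially the same route as the paper: the key step in both is using finite type to replace $\cl^{\infty}(X)$ inside the outer $\cl$ by a finite witness $Y$, which by the increasing chain $\cl^{i}(X) \subseteq \cl^{i+1}(X)$ lies in a single level $\cl^{N}(X)$, and the other equality is the same reindexing of the union. The only difference is that you argue element-wise where the paper manipulates the unions of $\cl(Y)$ directly, which is a stylistic rather than substantive distinction.
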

\begin{proof}
    By construction $\cl(\cl^{\infty}(X)) = \cl(\cup_{i \in \N} \cl^{i}(X))$.
    But since $\cl$ is of finite type we have
    \[
        \cl\left(\bigcup_{i \in \N} \cl^{i}(X)\right) = \bigcup_{\substack{|Y|< \infty,\\ Y \subseteq \bigcup_{i \in \N} \cl^{i}(X)}} \!\!\!\!\!\!\!\!\!\cl(Y).
    \]
    Since $Y$ is finite, there is some $j$ for each $Y$ such that $Y \subseteq \cl^{j}(X)$ since $\cl^{i}(X) \subseteq \cl^{i+1}(X)$ by preclosure.
    Therefore,
    \[
        \bigcup_{\substack{|Y|< \infty,\\ Y \subseteq \bigcup_{i \in \N} \cl^{i}(X)}} \!\!\!\!\!\!\!\!\!\cl(Y) = \bigcup_{j \in \N}\left( \bigcup_{Y \subseteq \cl^{j}(X)} \!\!\!\cl(Y)\right) = \bigcup_{j \in \N} \cl^{j+1}(X) = \cl^{\infty}(X).
    \]

    Finally, 
    \[
       \cl^{\infty}(\cl(X)) = \bigcup_{i \in \N} \cl^{i}\left( \cl(X)\right) = \bigcup_{i \in \N}\cl^{i + 1}(X) = \cl^{\infty}(X).
    \]
\end{proof}

\begin{lemma}
    \label{lem:finite_type_implies_infty_closure}
    Let $\cl: \mcP(Z) \to \mcP(Z)$ be a preclosure operator of finite type for a set $Z$.
    Then $\cl^{\infty}$ is a closure operator.
\end{lemma}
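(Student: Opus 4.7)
The plan is to verify the three defining properties of a closure operator for $\cl^{\infty}$: extensivity ($X \subseteq \cl^{\infty}(X)$), monotonicity ($X \subseteq Y$ implies $\cl^{\infty}(X) \subseteq \cl^{\infty}(Y)$), and idempotency ($\cl^{\infty}(\cl^{\infty}(X)) = \cl^{\infty}(X)$). The first two are inherited directly from $\cl$ being a preclosure, so the real work is in idempotency, and \autoref{lem:closure_is_comm} is designed precisely to give us this.

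First I would verify extensivity: since $\cl^{0}(X) = X$ appears in the union defining $\cl^{\infty}(X) = \bigcup_{i \in \N} \cl^{i}(X)$, we have $X \subseteq \cl^{\infty}(X)$. Next, for monotonicity, suppose $X \subseteq Y$. Using the monotonicity of $\cl$ and induction on $i$, one shows $\cl^{i}(X) \subseteq \cl^{i}(Y)$ for every $i \in \N$, whence taking unions gives $\cl^{\infty}(X) \subseteq \cl^{\infty}(Y)$.

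The main step is idempotency. Here I would iterate \autoref{lem:closure_is_comm}. Writing
\[
    \cl^{\infty}(\cl^{\infty}(X)) = \bigcup_{i \in \N} \cl^{i}(\cl^{\infty}(X)),
\]
a straightforward induction on $i$, using $\cl \circ \cl^{\infty} = \cl^{\infty}$ from \autoref{lem:closure_is_comm}, shows $\cl^{i}(\cl^{\infty}(X)) = \cl^{\infty}(X)$ for every $i \in \N$. Taking the union over $i$ gives $\cl^{\infty}(\cl^{\infty}(X)) = \cl^{\infty}(X)$, completing idempotency and hence the proof.

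The only subtle point is making sure the monotonicity induction and the idempotency induction use only the preclosure axioms and the identity already established in \autoref{lem:closure_is_comm}; the finite-type hypothesis itself is not invoked again here since it has already been absorbed into that lemma.
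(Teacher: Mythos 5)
Your proof is correct and follows essentially the same route as the paper: the paper likewise observes that $\cl^{\infty}$ is a preclosure by construction and reduces everything to idempotence, which it deduces from \autoref{lem:closure_is_comm}. Your write-up merely makes explicit the small iteration $\cl^{i}(\cl^{\infty}(X)) = \cl^{\infty}(X)$ that the paper leaves implicit when citing that lemma.
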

\begin{proof}
    We first remark that $\cl^{\infty}$ is necessarily a preclosure operator by construction.
    Therefore, it suffices to show idempotence: $\cl^{\infty}(\cl^{\infty}(X)) = \cl^{\infty}(X)$.
    But by, \autoref{lem:closure_is_comm} we have our desired result.
\end{proof}

We next show that the Bruhat preclosure is of finite type.
For this we let $\bnclosure{A}{i}$ denote the Bruhat preclosure performed $i$ times, \ie $\cnclosure{A}{1} = \cclosure{A}$, $\cnclosure{A}{2} = \cclosure{\cclosure{A}}$, $\cnclosure{A}{3} = \cclosure{\cclosure{\cclosure{A}}}$, etc., and we let $\cnclosure{A}{\infty} = \cup_{i \in \N} \cnclosure{A}{i}$.
\begin{proposition}
    For $A \subseteq T$ the Bruhat preclosure is of finite type.
\end{proposition}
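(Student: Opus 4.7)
The plan is to unpack the definition of finite type directly from the construction of the Bruhat preclosure: a path is, by definition, a finite sequence of edges, so any witness of membership in $\bclosure{A}$ uses only finitely many labels.

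First I would establish the easy inclusion $\bigcup\set{\bclosure{Y}}{Y \subseteq A,\, \order{Y} \in \N} \subseteq \bclosure{A}$. This is immediate from the preclosure axiom (ii): since $Y \subseteq A$, monotonicity gives $\bclosure{Y} \subseteq \bclosure{A}$, and taking the union over all such $Y$ preserves the inclusion.

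For the reverse inclusion, suppose $t \in \bclosure{A}$. By definition there exists an $A$-path from $e$ to $t$ in the Bruhat graph, say
\[
    e \xrightarrow{t_1} t_1 \xrightarrow{t_2} t_1 t_2 \xrightarrow{t_3} \cdots \xrightarrow{t_n} t_1 t_2 \cdots t_n = t,
\]
where each label $t_i$ lies in $A$. Set $Y = \left\{ t_1, t_2, \ldots, t_n \right\} \subseteq A$. Then $Y$ is finite (of cardinality at most $n$) and the very same path is a $Y$-path from $e$ to $t$, so $t \in \bclosure{Y}$. This places $t$ in the right-hand union, completing the proof.

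There is no serious obstacle here: the whole content of the statement is that $A$-paths, being finite sequences, only ever use finitely many elements of $A$ at a time. The only thing one must be slightly careful about is not to confuse the finiteness of the \emph{path} (always automatic) with finiteness of $A$ itself (which is not assumed); the argument exploits precisely the former.
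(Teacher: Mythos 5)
Your proof is correct and follows the same route as the paper: monotonicity gives one inclusion, and for the other you extract the finitely many labels of a witnessing $A$-path to form a finite $Y \subseteq A$ with $t \in \bclosure{Y}$. Nothing to add.
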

\begin{proof}
    To show the Bruhat preclosure is of finite type, it suffices to show 
    \[
        \bclosure{A} = \bigcup_{\substack{B\text{ finite}\\ B \subseteq A}} \bclosure{B}.
    \]
    By definition of preclosure, $\bclosure{A} \supseteq \bigcup_{B\text{ finite}, B\subseteq A} \bclosure{B}$.
    It remains to show the other inclusion.

    By definition of the Bruhat preclosure for $t \in \bclosure{A}$ there exists a finite path
    \[
        e \xrightarrow{t_1} t_1 \xrightarrow{t_2} t_1t_2 \xrightarrow{t_3} \ldots \xrightarrow{t_n} t_1 t_2 \ldots t_n = t.
    \]
    Therefore, taking $B = \left\{ t_1, \ldots, t_n \right\} \subseteq A$ implies $t \in \bigcup_{B\text{ finite}, B\subseteq A} \bclosure{B}$ as desired.
\end{proof}

\begin{corollary}
    \label{cor:inf_is_closre}
    For $A \subseteq T$, $\bbclosure{A}$ is a closure operator.
\end{corollary}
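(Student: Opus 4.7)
The plan is to observe that this corollary is an immediate consequence of the two preceding results, requiring essentially no new work. The notation $\bbclosure{A}$ is defined as $\cup_{n \in \N} \bnclosure{A}{n}$, which matches exactly the general construction $\cl^{\infty}(X) = \cup_{i \in \N} \cl^{i}(X)$ with $\cl$ instantiated as the Bruhat preclosure. So the corollary amounts to applying \autoref{lem:finite_type_implies_infty_closure} to the specific preclosure operator $\bclosure{\cdot}$.

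First, I would invoke the preceding proposition, which establishes that the Bruhat preclosure $\bclosure{\cdot} : \mcP(T) \to \mcP(T)$ is a preclosure operator of finite type. Then I would invoke \autoref{lem:finite_type_implies_infty_closure}, which says that $\cl^{\infty}$ is a closure operator whenever $\cl$ is a preclosure of finite type. Since $\bbclosure{\cdot}$ is by definition the infinite iteration $\bclosure{\cdot}^{\infty}$, the conclusion is immediate.

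There is no real obstacle here; the statement is a straightforward specialization. The only minor point worth spelling out is the identification $\bbclosure{A} = \bclosure{A}^{\infty}$, which is a matter of matching notation: the paper writes $\bnclosure{A}{n}$ for $n$-fold iteration of the Bruhat preclosure and $\bbclosure{A}$ for the union over all $n \in \N$, so no translation is needed beyond observing this. If one wanted a self-contained one-line proof, it would read: by the proposition, $\bclosure{\cdot}$ is of finite type, so \autoref{lem:finite_type_implies_infty_closure} applied to $\cl = \bclosure{\cdot}$ yields that $\bclosure{\cdot}^{\infty} = \bbclosure{\cdot}$ is a closure operator.
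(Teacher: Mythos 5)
Your proposal is correct and matches the paper exactly: the corollary is stated there as an immediate consequence of the preceding proposition (the Bruhat preclosure is of finite type) together with \autoref{lem:finite_type_implies_infty_closure}, with no further argument needed beyond the notational identification $\bbclosure{A} = \cup_{i \in \N}\bnclosure{A}{i}$.
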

We call the closure $\bbclosure{A}$ the \definition{infinite Bruhat closure}.
Note that the assumption of finite type is necessary.
There exist preclosures such that $\cl^{\infty}$ is not a closure without the assumption of finite type.
An example of this can be seen next.
\begin{example}
    We construct a preclosure operator by combining two operators.
    Let $X \subseteq \Q$ and define the following operators:
    \[
        d(X) = \set{\frac{a}{b}, \frac{a}{b+1}}{\frac{a}{b} \in X},\quad i(X) = \begin{cases*}
            X \cup \inf(X) & if $\inf(X)$ exists\\
            X               & otherwise
        \end{cases*}
    \]
    and set $\cl{X} = i(d(X))$.
    Taking $X = \left\{ 1 \right\}$ we can see that $\cl^{\infty}(X) = \left\{\frac{1}{1}, \frac{1}{2}, \frac{1}{3}, \ldots \right\}$, but $\cl(\cl^{\infty}(X)) = \cl^{\infty}(X) \cup \left\{ 0 \right\}$.
    Therefore, $\cl^{\infty}$ is not a closure operator.
\end{example}

\section{Main Results}
\label{sec:main-results}
In this section we prove a weaker version of Dyer's conjecture for an arbitrary Coxeter system using the infinite Bruhat closure.
To do this we first give some lemmas on finite Coxeter systems and then prove Dyer's conjecture on dihedral Coxeter groups.
We then use these in order to prove our main theorem.
\begin{theorem}\label{thm:main}
    For $u$ and $v$ in $W$ bounded above, then
    \[
        N(u \joinR v) = \bbclosure{N(u) \cup N(v)}.
    \]
\end{theorem}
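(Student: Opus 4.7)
The plan splits the proof into two inclusions of very different character.

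For the easy inclusion $\bbclosure{N(u) \cup N(v)} \subseteq N(u \joinR v)$, since $u, v \leqR u \joinR v$ we have $N(u) \cup N(v) \subseteq N(u \joinR v)$. The right-hand side is a finite initial section by \autoref{prop:finite_inits_are_inversions}, so \autoref{thm:Bruhat_is_closure} gives $\bclosure{N(u \joinR v)} = N(u \joinR v)$. Monotonicity of the preclosure together with induction on $n$ then yields $\bnclosure{N(u) \cup N(v)}{n} \subseteq N(u \joinR v)$ for every $n \in \N$, and taking the union produces the desired inclusion.

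For the hard inclusion $N(u \joinR v) \subseteq \bbclosure{N(u) \cup N(v)}$, the strategy is to prove the structural claim that $B := \bbclosure{N(u) \cup N(v)}$ is itself an inversion set, say $B = N(x)$. Given this, $N(u), N(v) \subseteq N(x)$ forces $u, v \leqR x$, whence $u \joinR v \leqR x$; combining with the easy inclusion $B \subseteq N(u \joinR v)$ pins down $x = u \joinR v$ and yields the sought equality. Since $B$ is finite (as a subset of $N(u \joinR v)$), \autoref{prop:finite_inits_are_inversions} reduces the claim to producing a reflection order on $T$ whose initial section is exactly $B$.

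To construct such a reflection order, I would combine the dihedral case of Dyer's conjecture (established earlier in the section) with the standard local characterization of initial sections: a finite subset of an initial section is itself an initial section precisely when its intersection with every dihedral reflection subgroup $W'$ is an initial section of the reflection order induced on $W'$. The crucial input is that $B$ is a fixed point of the preclosure, $\bclosure{B} = B$, which follows from $B = \bbclosure{B}$ and \autoref{cor:inf_is_closre}. Translated through \autoref{prop:change_arrows} and the twisted-length toolkit of \autoref{sec:twisted-bruhat-orders}, this fixed-point property forces each restriction $B \cap W'$ to be dihedral-closed, and the dihedral case of Dyer's conjecture then identifies $B \cap W'$ with the inversion set of some element of $W'$.

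The main obstacle is precisely this local-to-global passage: one must track how $A$-paths in the full Bruhat graph project onto paths inside each dihedral reflection subgroup $W'$, and handle that new reflections produced by dihedral completion inside one subgroup may in turn trigger further completions inside another. This is exactly where the infinite iteration is indispensable: \autoref{ex:H3_fail} already shows that a single preclosure can miss secondary completions, whereas the fixed-point nature of $\bbclosure{\cdot}$, guaranteed by the finite-type property, is the precise statement that no such completions remain outstanding when we stop.
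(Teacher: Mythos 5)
Your easy inclusion is fine and matches the paper's use of \autoref{thm:Bruhat_is_closure}. The hard inclusion, however, rests entirely on the structural claim that $B := \bbclosure{N(u) \cup N(v)}$ is an inversion set, and the argument you sketch for that claim has a genuine gap. You assert that the fixed-point property $\bclosure{B} = B$, ``translated through \autoref{prop:change_arrows} and the twisted-length toolkit,'' forces $B \cap W'$ to be an initial section of every dihedral reflection subgroup $W'$. Nothing in the paper's toolkit gives this, and the implication is false for general fixed points of the Bruhat preclosure: in type $A_2$ the set $\{sts\}$ satisfies $\bclosure{\{sts\}} = \{sts\}$ (the only available $A$-path is $e \xrightarrow{sts} sts$), yet it is not an initial section of either reflection order on $\{s, sts, t\}$. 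So being Bruhat-closed does not imply being dihedrally closed; reachability by length-increasing $A$-paths from $e$ in $W$ is a different condition from closure inside a dihedral reflection subgroup, precisely because the canonical generators of $W'$ are generally not simple in $W$ and $W$-length along a path has little to do with $W'$-length. Any proof along your lines would have to exploit the extra hypothesis $N(u) \cup N(v) \subseteq B$, and you give no mechanism for doing so; moreover, even granting closure of each $B \cap W'$, to invoke the characterization of finite initial sections you also need the co-closure (complement) condition, which your sketch never addresses. You acknowledge the local-to-global passage as ``the main obstacle,'' but that obstacle is the whole theorem: your claim that $B$ is an inversion set is equivalent to the statement being proved, so the reduction does not advance matters.

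For comparison, the paper does not attempt to show directly that $B$ is an inversion set. It proves the dihedral identity $N(x\longElI{I}) = \bclosure{N(xs) \cup N(xt)}$ for $x \in W^I$ (\autoref{lem:dihedral_case}, via the conjugation \autoref{lem:input_x}), and then runs a terminating algorithm (\autoref{lem:proof-one-cover}, \autoref{lem:one-jump-in-algo}) that repeatedly replaces a pair of elements above their meet by the corresponding dihedral join $x\longElI{I}$, preserving both the join and the infinite Bruhat closure at each step; termination uses that the interval $[e,w]_R$ below the assumed upper bound is finite. If you want to salvage your approach, you would need an independent proof that $\bbclosure{N(u)\cup N(v)}$ is biclosed, which is not available here.
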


\subsection{Initial lemmas on finite Coxeter systems}\label{ssec:initial-lemmas-on-finite-coxeter-systems}
In this section we prove that if the join of two elements in a finite Coxeter group is the long element, then Dyer's conjecture, and in particular \autoref{thm:main}, is true.

Let $(W,S)$ be a finite Coxeter system.
Recall that for a $w$ in $W$ the set of inversions is the set ${N(w) = \set{t \in T}{\ell(tw) < \ell(w)}}$.
The set of left descents is the set $D_L(w) = N(w) \cap S =  \set{s \in S}{\ell(sw) < \ell(w)}$.

We require the following corollary which is a restriction of \cite[Corollary 1.6(ii)]{Dyer_OnTheWeakOrderOfCoxeterGroups}.
\begin{corollary}\label{cor:simples_contained}
    If $X \subseteq W$ is bounded above in the weak order and $s \in S$ then $s \in N(\bigjoin X)$ if and only if $s \in N(x)$ for some $x \in X$.
\end{corollary}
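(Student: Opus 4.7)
The direction ``$s \in N(x)$ for some $x \in X$ implies $s \in N(\bigjoin X)$'' is immediate: since $x \leqR \bigjoin X$, we have $N(x) \subseteq N(\bigjoin X)$, whence $s \in N(\bigjoin X)$. For the converse I argue by contrapositive. Assume $s \notin N(x)$ for every $x \in X$, set $y := \bigjoin X$, and aim to derive $s \notin N(y)$. Suppose, for contradiction, that $s \in N(y)$. Then $\ell(sy) = \ell(y) - 1$, so writing $y' := sy$ one has $y = sy'$, $\ell(y) = \ell(y') + 1$, and $s \notin N(y')$; here $y'$ is the unique minimal-length representative of the coset $\gen{s}y$. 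I plan to show that $y'$ is itself an upper bound of $X$, which contradicts the defining minimality of $y = \bigjoin X$ and completes the proof.

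To show $y'$ is an upper bound it suffices, for each $x \in X$, to prove $x \leqR y'$. The hypothesis places every $x \in X$ in $U := \set{w \in W}{s \notin N(w)}$, while $y \in sU$. From the inversion cocycle one computes
\[
    N(y) = N(sy') = \{s\} \symdiff sN(y')s = \{s\} \cup sN(y')s,
\]
the last equality being a disjoint union since $s \notin N(y')$. Combining $N(x) \subseteq N(y)$ with $s \notin N(x)$ yields only the partial inclusion $N(x) \subseteq sN(y')s$; the substantive step --- and the main obstacle --- is to upgrade this to the stronger $N(x) \subseteq N(y')$. Equivalently, one must establish that the projection $\pi : W \to U$ defined by $\pi(w) = w$ for $w \in U$ and $\pi(w) = sw$ for $w \in sU$ is compatible ``from below in $U$'' with the right weak order: for any $u \in U$ and any $z \in W$, $u \leqR z$ implies $u \leqR \pi(z)$.

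I would prove this projection compatibility using reduced expressions and the exchange condition: starting from a reduced expression of $z$ extending one of $u$, the fact that $s \in D_L(z)$ but $s \notin D_L(u)$ allows one --- via controlled braid and commutation moves --- to produce a reduced expression for $z$ of the form $s \cdot u \cdot w''$, from which $u \leqR sz = \pi(z)$ follows. Alternatively, the corollary is a direct specialisation of \cite[Corollary 1.6(ii)]{Dyer_OnTheWeakOrderOfCoxeterGroups}, which proves the analogous compatibility in the broader framework of initial sections; since inversion sets are exactly the finite initial sections by \autoref{prop:finite_inits_are_inversions}, one may simply quote that result.
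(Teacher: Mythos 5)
Your fallback route --- simply quoting \cite[Corollary 1.6(ii)]{Dyer_OnTheWeakOrderOfCoxeterGroups} --- is exactly what the paper does: \autoref{cor:simples_contained} is stated there as a restriction of Dyer's result, with no independent proof given, so on that branch you and the paper coincide.

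Your self-contained branch, however, has a genuine gap at precisely the point you yourself flag as ``the main obstacle''. The global structure is sound: the easy direction is fine, and if $s \in N(y)$ for $y = \bigjoin X$ and $y' = sy$ were an upper bound of $X$, then $y \leqR y'$ would force $\ell(y) \le \ell(y') = \ell(y)-1$, a contradiction; so everything rests on the compatibility claim that $u \leqR z$, $s \notin N(u)$, $s \in N(z)$ imply $u \leqR sz$. But your proposed proof of that claim is essentially circular: producing a reduced expression of $z$ of the form $s \cdot u \cdot w''$ is literally equivalent to $u \leqR sz$, so ``controlled braid and commutation moves'' restates the goal rather than establishing it. The claim is true, and here is a non-circular way to get it. Since $s \in N(z)$, the cocycle gives $N(sz) = \{s\} \symdiff sN(z)s = sN(z)s \setminus \{s\}$, so (using $s \notin N(u)$) the claim is equivalent to $sN(u)s \subseteq N(z)$. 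For $r \in N(u)$, consider the dihedral reflection subgroup $W' = \gen{s, r}$; note $s \in \chi(W')$ because $N(s) = \{s\}$. By Dyer's theory of reflection subgroups, $N(u) \cap W'$ and $N(z) \cap W'$ are inversion sets of elements of $(W', \chi(W'))$. The first contains $r$ but not $s$, hence contains the canonical generator of $W'$ other than $s$; therefore $N(z) \cap W'$ contains both canonical generators, which forces $W'$ to be finite with $N(z) \cap W' = W' \cap T$, and in particular $srs \in N(z)$. With that lemma supplied your argument is complete; as written, without it (or without falling back on the citation), it is not.
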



We next give three additional lemmas before proving the result for this section: that if the join of two elements is equal to the long element then the Bruhat preclosure of their inversion sets is equal to the set of reflections, \ie $u,v \in W$ such that $u \joinR v = \longEl$ implies $N(\longEl) = \bclosure{N(u) \cup N(v)}$.
To do this we go through the descents of each element.
We will show that if every simple reflection is contained in one of the two descent sets then the join must be the long element.
For this we need the following lemma.

\begin{lemma}[{\cite[Prop 2.3.1 (ii)]{Bjorner_CombinatoricsofCoxeterGroups}}]
    \label{lem:all_descent_implies_long}
    Let $(W,S)$ be finite.
    For $w \in W$, if $D_L(w) = S$ then $w = \longEl$.
\end{lemma}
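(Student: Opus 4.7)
The plan is to prove this by a direct computation of the left descent set of $\longEl w$: I will show that $D_L(w) = S$ forces $D_L(\longEl w) = \varnothing$, which in turn forces $\longEl w = e$ and hence $w = \longEl$ (using $\longEl^{\inv} = \longEl$ for a long element). The final implication is easy: any non-identity $u \in W$ has some reduced expression $s_1 s_2 \cdots s_k$ with $k \geq 1$, and then $s_1 \in D_L(u)$, so a non-identity element has a non-empty left descent set.

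The heart of the argument rests on two standard facts about the long element of a finite Coxeter group, which I would cite from Bj\"orner--Brenti: (a) for every $v \in W$, $\ell(\longEl v) = \ell(\longEl) - \ell(v)$, because $v^{\inv} \leq_R \longEl$ always; and (b) conjugation by $\longEl$ permutes $S$, so for each $s \in S$ there is a unique $s' \in S$ with $s \longEl = \longEl s'$. Given these, for any $s \in S$ I compute $\ell(s\longEl w) = \ell(\longEl s' w) = \ell(\longEl) - \ell(s'w)$ and $\ell(\longEl w) = \ell(\longEl) - \ell(w)$, from which
\[
s \in D_L(\longEl w) \iff \ell(s'w) > \ell(w) \iff s' \notin D_L(w).
\]
Because $s \mapsto s'$ is a bijection of $S$, the hypothesis $D_L(w) = S$ forces $D_L(\longEl w) = \varnothing$, and then the observation above concludes the proof.

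The main obstacle is establishing ingredients (a) and (b); they are classical but nontrivial consequences of the theory of the long element. If a self-contained argument is preferred, an equally short route is geometric: translate $s \in D_L(w)$ into the statement $w^{\inv}(\alpha_s) \in \Phi^-$ in the reflection representation, and observe that if $w^{\inv}$ sends every simple root to a negative root, then any positive root $\beta = \sum_{s} c_s \alpha_s$ (with $c_s \geq 0$) is sent by $w^{\inv}$ to a nonpositive combination of positive roots, which can only be a negative root; hence $w^{\inv}(\Phi^+) \subseteq \Phi^-$, giving $\order{N(w)} = \order{\Phi^+} = \ell(\longEl)$ and therefore $w = \longEl$. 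Either route is very short once the standard machinery is in place.
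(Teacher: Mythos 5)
Your proposal is correct. Note that the paper does not actually prove this lemma --- it is quoted directly from Bj\"orner--Brenti --- so there is no in-paper argument to compare against; what you supply is an independent proof of a cited fact, and both of your routes work. Your primary route (showing $D_L(w)=S$ forces $D_L(\longEl w)=\varnothing$, hence $\longEl w=e$) is valid, but it leans on two properties of $\longEl$ --- the length complementation $\ell(\longEl v)=\ell(\longEl)-\ell(v)$ and the fact that conjugation by $\longEl$ permutes $S$ --- which in Bj\"orner--Brenti appear \emph{after} the statement you are proving, so if you cite them you should note there is no circularity: both rest only on $N(\longEl)=T$ (i.e.\ on the maximality of $\ell(\longEl)$), not on the implication $D_L(w)=S\Rightarrow w=\longEl$ itself. (Also, the justification ``$v^{\inv}\leqR\longEl$'' literally gives $\ell(v\longEl)=\ell(\longEl)-\ell(v)$; the left-handed version you use follows by taking inverses, since $\longEl$ is an involution --- a cosmetic point.) Your second, root-geometric route is the standard textbook proof and is the cleaner choice here: $D_L(w)=S$ means $w^{\inv}(\alpha_s)\in\Phi^-$ for all $s\in S$, hence $w^{\inv}(\Phi^+)\subseteq\Phi^-$, so $\ell(w)=\order{N(w)}=\order{\Phi^+}=\order{T}$ is maximal, and uniqueness of the maximal-length element (which the paper assumes in defining $\longEl$) gives $w=\longEl$. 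It is self-contained given only facts the paper already records ($\order{N(w)}=\ell(w)$ and the descent--root dictionary), so it buys you independence from the later structure theory of $\longEl$.
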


The following is well-known, but we provide a proof for completeness.
\begin{lemma}\label{lem:S_iff_long}
    Let $(W,S)$ be a finite Coxeter system and $u,v \in W$.
Then $u \joinR v = \longEl$ if and only if $D_L(u) \cup D_L(v) = S$.
\end{lemma}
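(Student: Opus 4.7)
The plan is to prove both implications directly from the two results cited just above the statement, namely \autoref{cor:simples_contained} and \autoref{lem:all_descent_implies_long}, together with the standard characterization of weak order via inversion sets. Since $(W,S)$ is finite, the weak order is a lattice, so $u \joinR v$ always exists.

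For the forward direction, I would assume $u \joinR v = \longEl$ and argue that every simple reflection lies in $D_L(u) \cup D_L(v)$. First I would note $D_L(\longEl) = S$, either because $N(\longEl) = T$ and intersecting with $S$ gives $S$, or as the easy direction of \autoref{lem:all_descent_implies_long}. Then, since $\{u, v\}$ is bounded above (by $\longEl$ itself), \autoref{cor:simples_contained} applied to $X = \{u,v\}$ gives: for each $s \in S = D_L(\longEl) = D_L(u \joinR v)$ (so in particular $s \in N(u \joinR v)$), there exists $x \in \{u,v\}$ with $s \in N(x)$, i.e.\ $s \in D_L(x) \subseteq D_L(u) \cup D_L(v)$. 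Thus $S \subseteq D_L(u) \cup D_L(v)$, and the reverse inclusion is trivial.

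For the backward direction, I would assume $D_L(u) \cup D_L(v) = S$ and set $w = u \joinR v$. Since $u \leqR w$ and $v \leqR w$, one has $N(u) \cup N(v) \subseteq N(w)$ by the inversion-set characterization of weak order from \autoref{sec:the-weak-order}. Intersecting with $S$ yields
\[
    S = D_L(u) \cup D_L(v) = \bigl(N(u) \cup N(v)\bigr) \cap S \subseteq N(w) \cap S = D_L(w) \subseteq S,
\]
so $D_L(w) = S$. Then \autoref{lem:all_descent_implies_long} gives $w = \longEl$, as desired.

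I do not expect any real obstacle here: each direction is essentially a one-line invocation of a lemma already stated in the paper. The only subtlety worth flagging explicitly is that the join exists because $W$ is finite (hence $(W, \leqR)$ is a lattice), which justifies writing $u \joinR v$ in both directions without a side condition on boundedness.
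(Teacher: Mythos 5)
Your proposal is correct and follows essentially the same route as the paper: the backward direction uses $D_L(u) \cup D_L(v) \subseteq D_L(u \joinR v)$ (via inversion sets) together with \autoref{lem:all_descent_implies_long}, and the forward direction applies \autoref{cor:simples_contained} to $X = \{u,v\}$ after noting $S \subseteq N(\longEl)$. Your extra remarks (existence of the join in the finite lattice, spelling out the inversion-set inclusion) are fine elaborations of the same argument.
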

\begin{proof}
    If $D_L(u) \cup D_L(v) = S$ then $S = D_L(u) \cup D_L(v) \subseteq D_L(u \joinR v) \subseteq S$ and by \autoref{lem:all_descent_implies_long} we have $u \joinR v = \longEl$.

    Conversely, if $u \joinR v = \longEl$ then $D_L(u \joinR v) = S$ and by definition $S \subseteq N(u \joinR v)$.
    By \autoref{cor:simples_contained} for every $s \in S$, $s \in N(u) \cup N(v)$ and therefore $s \in D_L(u) \cup D_L(v)$ giving us the desired result.
\end{proof}

When it comes to the Bruhat preclosure there is a nice relationship between the simple reflections and the reflections.
Recall that the Bruhat graph restricted to $S$ is just the Cayley graph of the weak order.
This implies that the Bruhat preclosure of $S$ will be the reflections reachable in the weak order.
Since the Cayley graph of the right weak order has all of $W$ as a vertex set, then every reflection can be reached through this graph.
This is true for any Coxeter system, not just finite ones, giving the following lemma.

\begin{lemma}
    \label{lem:closure_simple_roots}
    Let $(W, S)$ be an (arbitrary) Coxeter system, then
    $\bclosure{S} = T$.
\end{lemma}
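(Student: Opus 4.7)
The plan is to observe that $\bclosure{S} \subseteq T$ holds trivially by definition of the Bruhat preclosure, so only the reverse inclusion requires argument. Let $t \in T$. Viewing $t$ as an element of $W$ (which is generated by $S$), choose any reduced expression $t = s_1 s_2 \cdots s_n$ with each $s_i \in S$.

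Next, I would consider the sequence of elements $e,\, s_1,\, s_1 s_2,\, \ldots,\, s_1 s_2 \cdots s_n = t$ and argue that the successive transitions form an $S$-path from $e$ to $t$ in the Bruhat graph. For each $i$, the element $s_1 \cdots s_{i-1} s_i$ is a prefix of the reduced expression and is therefore itself reduced, so $\ell(s_1 \cdots s_{i-1}) < \ell(s_1 \cdots s_i)$. This confirms that the directed edge $s_1 \cdots s_{i-1} \xrightarrow{\,\cdot\,} s_1 \cdots s_i$ belongs to the Bruhat graph. Its label is the reflection $(s_1 \cdots s_{i-1})^{\inv}(s_1 \cdots s_i) = s_i$, which lies in $S$. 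Thus every edge of the path is labeled by an element of $S$, showing that this is an $S$-path from $e$ to $t$ and hence $t \in \bclosure{S}$.

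Since there is no finiteness assumption on $W$ anywhere in this argument (reduced expressions exist for every element in any Coxeter group), this immediately handles the arbitrary Coxeter system case. I do not anticipate a real obstacle here: the only subtlety is remembering that the label on the edge $u \to us$ in the Bruhat graph is the reflection $u^{\inv}(us) = s$ itself, rather than some conjugate, so prefixes of reduced expressions give paths whose labels are literally the simple reflections used in the expression.
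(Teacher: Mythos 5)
Your argument is correct and is essentially the paper's own justification made explicit: the paper observes that the Bruhat graph restricted to edges labelled by $S$ is the Cayley graph of the weak order, in which every element of $W$ (in particular every reflection) is reachable from $e$, and your prefix-of-a-reduced-expression path is exactly that reachability argument. The point you flag about the edge label $u \to us$ being the simple reflection $s$ itself is indeed the only subtlety, and you handle it correctly.
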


For the long element, since every simple root is in the inversion set, we can trace the Cayley graph of the weak order to obtain a path to every reflection as we did in the previous lemma.
We therefore have the following proposition.
\begin{proposition}
    \label{prop:long_implies_closed}
    Let $\longEl$ be the long element of a finite Coxeter system $(W,S)$.
    For $u,v \in W$, if we have $u \joinR v = \longEl$ then $N(u \joinR v) = \bclosure{N(u) \cup N(v)}$
\end{proposition}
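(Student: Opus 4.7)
The plan is to chain together the lemmas that have just been established, since the hard work has already been done. First I would observe that $N(\longEl) = T$, so the right-hand side $N(u \joinR v) = N(\longEl)$ equals the full set of reflections $T$. Hence it suffices to show $\bclosure{N(u) \cup N(v)} = T$, and since the Bruhat preclosure of any subset of $T$ is contained in $T$ by definition, I only need the inclusion $T \subseteq \bclosure{N(u) \cup N(v)}$.

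To get this inclusion, I would invoke \autoref{lem:S_iff_long}: because $u \joinR v = \longEl$, we have $S = D_L(u) \cup D_L(v) \subseteq N(u) \cup N(v)$. Applying the monotonicity property of the preclosure operator then gives
\[
    \bclosure{S} \subseteq \bclosure{N(u) \cup N(v)}.
\]
Finally, \autoref{lem:closure_simple_roots} tells us that $\bclosure{S} = T$, so combining these yields $T \subseteq \bclosure{N(u) \cup N(v)} \subseteq T$, whence equality.

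There is no real obstacle here; the proposition is essentially a corollary assembled from \autoref{lem:S_iff_long} (which converts the hypothesis $u \joinR v = \longEl$ into the set-theoretic inclusion $S \subseteq N(u) \cup N(v)$) together with \autoref{lem:closure_simple_roots} (which says that starting with all simple reflections one can reach every reflection via an $S$-path in the Bruhat graph). The only thing to be careful about is reminding the reader that monotonicity of $\bclosure{\cdot}$ is built into the definition of a preclosure operator, so no extra argument is needed at that step.
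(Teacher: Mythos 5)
Your proposal is correct and follows essentially the same route as the paper: both convert $u \joinR v = \longEl$ into $S \subseteq N(u) \cup N(v)$ via \autoref{lem:S_iff_long}, then use monotonicity and $\bclosure{S} = T$ from \autoref{lem:closure_simple_roots}. The only cosmetic difference is the upper bound: you use the definitional containment $\bclosure{N(u)\cup N(v)} \subseteq T = N(\longEl)$, while the paper routes through $\bclosure{N(u)\cup N(v)} \subseteq \bclosure{N(u \joinR v)} = N(u \joinR v)$ via \autoref{thm:Bruhat_is_closure}; both are valid.
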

\begin{proof}
    Since $u \joinR v = \longEl$, by \autoref{lem:S_iff_long} we have $D_L(u) \cup D_L(v) = S$ and by definition $S \subseteq N(u) \cup N(v)$.
    Then by \autoref{lem:closure_simple_roots} and \autoref{thm:Bruhat_is_closure}
    \[
        T = \bclosure{S} \subseteq \bclosure{N(u) \cup N(v)} \subseteq \bclosure{N(u \join_R v)} = N(u \join_R v) = T
    \]
    as desired.
\end{proof}

\subsection{Dihedral Coxeter groups}
\label{subsec:dihedral}

The join in finite dihedral Coxeter groups is well-behaved: the join of two elements is either one of the elements itself or is the long element.
In the infinite case, we have a similar phenomenon, but additionally the join between two elements might not exist.
This allows us to give a proof of our main theorem in the dihedral case which will be needed later.
Note that in \cite[Theorem 3.1]{Biagioli2025} Biagioli and Perrone give a proof of this result.
We give an alternative proof using our previous lemmas.
\begin{theorem}
    \label{thm:join_closure_dihedral}
    Let $(W,S)$ be a dihedral Coxeter system.
    For $u, v \in W$, if $u \joinR v$ exists then $N(u \joinR v) = \bclosure{N(u) \cup N(v)}$.
\end{theorem}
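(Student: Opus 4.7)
The plan is to exploit the very simple shape of the right weak order on a dihedral Coxeter group $(W,\{s,t\})$. I would first record that the Hasse diagram of $(W,\leqR)$ consists of exactly two maximal chains
\[
    e \precdot s \precdot st \precdot sts \precdot \cdots \quad \text{and} \quad e \precdot t \precdot ts \precdot tst \precdot \cdots,
\]
glued at $e$, and additionally glued at the long element $\longEl$ when $m_{st} < \infty$. This follows from the fact that each element of $W$ has a unique reduced expression, so every nonidentity element begins with exactly one of $s$ or $t$.

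With this structural picture, the argument splits into two cases according to whether $u$ and $v$ are comparable in $\leqR$. If $u \leqR v$ (the case $v \leqR u$ is symmetric), then $u \joinR v = v$ and, since $N(u) \subseteq N(v)$, we have $N(u) \cup N(v) = N(v)$. Inversion sets are (finite) initial sections by \autoref{prop:finite_inits_are_inversions}, so \autoref{thm:Bruhat_is_closure} yields
\[
    \bclosure{N(u) \cup N(v)} = \bclosure{N(v)} = N(v) = N(u \joinR v).
\]

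If instead $u$ and $v$ are incomparable, then both are nonidentity and they lie on opposite chains above. In the infinite case ($m_{st}=\infty$), every element above $u$ sits on the same chain as $u$ and likewise for $v$, so $u$ and $v$ admit no common upper bound in $\leqR$ and $u \joinR v$ would not exist; hence the hypothesis that $u \joinR v$ exists forces $W$ to be finite with $u \joinR v = \longEl$. Then \autoref{prop:long_implies_closed} applied to the finite dihedral system $(W, \{s, t\})$ gives $N(u \joinR v) = \bclosure{N(u) \cup N(v)}$ directly.

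The main obstacle is really the structural verification about the dihedral weak order and the clean case split; once those are in place, no new machinery is required, since the dihedral situation is rigid enough to reduce entirely to either \autoref{thm:Bruhat_is_closure} (the comparable case) or \autoref{prop:long_implies_closed} (the incomparable case, where the join must be the long element). No appeal to the infinite Bruhat closure $\bbclosure{\cdot}$ is needed in either branch, which is why the statement can be upgraded from \autoref{thm:main} to an equality involving only the single-step preclosure $\bclosure{\cdot}$.
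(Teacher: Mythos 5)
Your proposal is correct and is essentially the paper's proof: both reduce to \autoref{thm:Bruhat_is_closure} when $u$ and $v$ are comparable and to \autoref{prop:long_implies_closed} when the join is forced to be $\longEl$, the only difference being that the paper organizes the case split through $D_L(u)\cup D_L(v)$ and \autoref{lem:S_iff_long} rather than through comparability and the two-chain picture of the dihedral weak order. One small slip worth fixing: not every element of a finite dihedral group has a unique reduced expression ($\longEl$ has two, and both $s$ and $t$ are left descents of it), though your description of the Hasse diagram as two chains glued at $e$ and at $\longEl$ is correct and the argument is unaffected.
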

\begin{proof}
    Let $S = \left\{ s,t \right\}$ and without loss of generality suppose $\ell(u) \geq \ell(v)$.
    If $D_L(u) \cup D_L(v) = S$ then $u \joinR v$ exists only if $W$ is finite in which case $u \joinR v = \longEl$ by \autoref{lem:S_iff_long}.
    Then \autoref{prop:long_implies_closed} gives us the desired result.
    If $D_L(u) \cup D_L(v) = \varnothing$ then $u = v = e$ and $N(e \joinR e) = \varnothing = \bclosure{N(e) \cup N(e)}$.
    
    Finally, we suppose without loss of generality that $D_L(u) \cup D_L(v) = \left\{ s \right\} \subsetneq S$.
    Since $W$ is dihedral there are reduced expressions for $u$ and $v$, $u = ststs\ldots$ and $v = ststs\ldots$ of lengths $\ell(u)$ and $\ell(v)$ respectively.
    In other words $v \leqR u$ implying $N(v) \subseteq N(u)$ and $u \joinR v = u$.
    By \autoref{thm:Bruhat_is_closure}
    \[
        N(u \joinR v) = N(u) = \bclosure{N(u)} = \bclosure{N(u) \cup N(v)}
    \]
    as desired.
\end{proof}

\subsection{General Case}
We now consider the general case of an arbitrary Coxeter group.
We start off with some preliminary lemmas which will help build up to our final theorem.
The following lemma is an easy exercise using the properties of preclosure operators.
\begin{lemma}
    \label{lem:subset_inclusion_closure}
    For two subsets $A$ and $B$ of $T$ we have
    \[
        \bclosure{A} \cup B \subseteq \bclosure{A \cup B}.
    \]
\end{lemma}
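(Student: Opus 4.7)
The plan is to apply the two defining axioms of a preclosure operator directly. The statement decomposes naturally as a union of two inclusions, each corresponding to one axiom, so I would prove them separately and then combine.

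First I would handle the inclusion $B \subseteq \bclosure{A \cup B}$. This is immediate from the extensivity axiom (property (i) in the definition of preclosure given at the start of \autoref{sec:bruhat-preclosure}): for any $X$, $X \subseteq \bclosure{X}$. Applying this with $X = A \cup B$ gives $A \cup B \subseteq \bclosure{A \cup B}$, and in particular $B \subseteq \bclosure{A \cup B}$.

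Second I would handle the inclusion $\bclosure{A} \subseteq \bclosure{A \cup B}$. This follows from the monotonicity axiom (property (ii) of preclosure): $X \subseteq Y$ implies $\bclosure{X} \subseteq \bclosure{Y}$. Taking $X = A$ and $Y = A \cup B$, for which $A \subseteq A \cup B$ is trivial, yields the desired inclusion.

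Finally I would combine the two inclusions. Since both $\bclosure{A}$ and $B$ are contained in $\bclosure{A \cup B}$, their union $\bclosure{A} \cup B$ is also contained in $\bclosure{A \cup B}$, completing the proof. There is no real obstacle here; the lemma is a formal consequence of the preclosure axioms and does not use any specific structure of the Bruhat preclosure, the Coxeter group, or the reflection set $T$. The main value of the lemma is presumably its applications later in the argument rather than any subtlety in its proof.
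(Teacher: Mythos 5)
Your proof is correct and matches the paper's intent exactly: the paper leaves this lemma as ``an easy exercise using the properties of preclosure operators,'' and your argument via extensivity (for $B$) and monotonicity (for $\bclosure{A}$), combined by taking the union, is precisely that exercise.
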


Since the join of the weak order exists only when we have sets of elements bounded above we will exclusively work with elements having bounds above.
If $u$ and $v$ are bounded above by an element $w$ then the Bruhat closure of their inversion sets must be contained in the inversion set of $w$.
The main idea comes from the fact that since the Bruhat closure of an inversion set is the inversion set itself, we can replace the smaller inversion sets for the larger one to get our desired result.
\begin{lemma}
    \label{lem:upper_bound_closure}
    Let $w \in W$ be an upper bound of $u$ and $v$ in $W$.
    Then $\bclosure{N(u) \cup N(v)} \subseteq N(w)$.
\end{lemma}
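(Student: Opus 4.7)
The plan is to reduce this to a direct application of \autoref{thm:Bruhat_is_closure} combined with the monotonicity property of the Bruhat preclosure. The key observation is that an inversion set is a (finite) initial section, so its Bruhat preclosure equals itself; once we place the union $N(u) \cup N(v)$ inside such an inversion set, the containment follows immediately.

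More precisely, I would first recall that by the definition of the right weak order (given in \autoref{ssec:coxeter-groups}), $u \leqR w$ and $v \leqR w$ translate into the inclusions $N(u) \subseteq N(w)$ and $N(v) \subseteq N(w)$. Taking the union gives $N(u) \cup N(v) \subseteq N(w)$. Then, applying the second axiom of a preclosure operator (monotonicity), we obtain
\[
    \bclosure{N(u) \cup N(v)} \subseteq \bclosure{N(w)}.
\]

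Next, I would invoke \autoref{prop:finite_inits_are_inversions} to observe that the finite set $N(w)$ is an initial section, i.e.\ $N(w) \in \initSects$. \autoref{thm:Bruhat_is_closure} then yields $\bclosure{N(w)} = N(w)$. Chaining the two displayed inclusions gives the desired result
\[
    \bclosure{N(u) \cup N(v)} \subseteq N(w).
\]

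There is no real obstacle here; the entire argument is a three-line application of earlier results. The only subtlety worth flagging is to make explicit that inversion sets qualify as initial sections (hence \autoref{thm:Bruhat_is_closure} applies to them), which is precisely what \autoref{prop:finite_inits_are_inversions} guarantees.
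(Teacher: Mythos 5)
Your proposal is correct and follows essentially the same route as the paper: translate the weak-order bounds into inclusions of inversion sets, apply monotonicity of the preclosure, and conclude with $\bclosure{N(w)} = N(w)$ from \autoref{thm:Bruhat_is_closure}. Your explicit mention that $N(w)$ is an initial section via \autoref{prop:finite_inits_are_inversions} is a small clarification the paper leaves implicit, but the argument is identical.
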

\begin{proof}
    By construction $u \leqR w$ and $v \leqR w$.
    In other words $N(u) \subseteq N(w)$ and $N(v) \subseteq N(w)$, and in particular $N(u) \cup N(v) \subseteq N(w)$.
    Therefore,
    \[
        \bclosure{N(u) \cup N(v)} \subseteq \bclosure{N(w)} = N(w)
    \]
    where the last equality comes from \autoref{thm:Bruhat_is_closure}.
\end{proof}

Given a subset $I$ of $S$ recall that $W^{I}$ is the set of minimal coset representatives of $W_I$, \ie
\[
    W^{I} = \set{w \in W}{\ell(w) < \ell(ws) \text{ for all } s \in I}.
\]
An interesting thing to note is that for all reduced expressions of minimal coset representatives $x \in W^{I}$ then multiplying on the right by a reduced expression of an element $w$ in $W_I$ gives a reduced expression for $xw$.
This falls from the fact that the poset structure between $x$ and $x\longElI{I}$ is the same as the poset structure between the identity and $\longElI{I}$.
We use this idea in the following.

\begin{lemma}
    \label{lem:input_x}
    Let $(W,S)$ be a Coxeter system and let $I$ be a subset of $S$.
    Then for $x \in W^{I}$ we have
    \[
        x\left(\bclosure{I}\right)x^{\inv} = \bclosure{xIx^{\inv}}
    \]
\end{lemma}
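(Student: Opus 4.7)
The plan is to show both sides equal $xT_Ix^{-1}$, where $T_I$ denotes $T \cap W_I$, the reflections of the parabolic $W_I$. Any $I$-path from $e$ stays inside $W_I$, since each label lies in $I \subseteq W_I$; conversely every reflection $t \in T_I$ has a reduced expression in $I$, yielding an $I$-path of strictly increasing $W$-length from $e$ to $t$. Hence $\bclosure{I} = T_I$.

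The central step is establishing $\bclosure{xIx^{-1}} \supseteq xT_Ix^{-1}$ by a parallel construction under conjugation. Given $t = s_1 \cdots s_m \in T_I$ reduced with $s_i \in I$, I would consider the path $e \to xs_1x^{-1} \to xs_1s_2x^{-1} \to \cdots \to xtx^{-1}$ with $xIx^{-1}$-labels and verify that consecutive $W$-lengths strictly increase. A single step amounts to showing $\ell(xux^{-1}) < \ell(xusx^{-1})$ whenever $\ell(u) < \ell(us)$, for $u \in W_I$ and $s \in I$. Setting $t' = usu^{-1} \in T_I$, this reduces to the biconditional $t' \in N(u) \iff xt'x^{-1} \in N(xux^{-1})$ for all $t' \in T_I$. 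I would prove this by two applications of the reflection cocycle $N(ab) = N(a) \symdiff aN(b)a^{-1}$, together with $N(x^{-1}) = x^{-1}N(x)x$, to expand
\[
N(xux^{-1}) = N(x) \symdiff xN(u)x^{-1} \symdiff (xux^{-1})N(x)(xux^{-1})^{-1},
\]
and then intersecting with $xT_Ix^{-1}$. The middle term lies inside $xT_Ix^{-1}$ since $N(u) \subseteq T_I$, while both outer terms meet $xT_Ix^{-1}$ trivially: this is exactly where the hypothesis $x \in W^{I}$ enters, as for any reflection $r \in T_I$ it forces $\ell(xr) = \ell(x) + \ell(r) > \ell(x)$, giving $xrx^{-1} \notin N(x)$ and hence $N(x) \cap xT_Ix^{-1} = \varnothing$; an analogous conjugation argument disposes of the third term. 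Therefore $N(xux^{-1}) \cap xT_Ix^{-1} = xN(u)x^{-1}$, and the biconditional follows.

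The reverse inclusion $\bclosure{xIx^{-1}} \subseteq xT_Ix^{-1}$ is immediate, since any $xIx^{-1}$-path from $e$ stays inside $xW_Ix^{-1}$. Combining everything gives $\bclosure{xIx^{-1}} = xT_Ix^{-1} = x\bclosure{I}x^{-1}$. The main obstacle is the cocycle computation and the verification that both outer terms vanish upon intersection with $xT_Ix^{-1}$, which is precisely where the hypothesis $x \in W^{I}$ is used in an essential way.
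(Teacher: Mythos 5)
Your proposal is correct and rests on the same engine as the paper's proof: expanding $N(xux^{-1})$ by the reflection-cocycle identity and using $x \in W^{I}$ to show that the two $N(x)$-terms are disjoint from $xT_Ix^{-1}$ (where $T_I = T \cap W_I$), so that conjugation by $x$ preserves the Bruhat-edge condition along a path labelled by $I$. The only difference is packaging: you first identify $\bclosure{I} = T_I$ and $\bclosure{xIx^{\inv}} = xT_Ix^{\inv}$ and lift paths coming from reduced words of reflections, whereas the paper transfers arbitrary $I$-paths to $xIx^{\inv}$-paths edge by edge without computing either closure explicitly.
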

\begin{proof}
    Let $(W_I, I)$ be the Coxeter system associated to $I$.
    Given an $I$-path 
    \[
        P : e \xrightarrow{t_1} t_1 \xrightarrow{t_2} t_1t_2 \xrightarrow{t_3} \cdots \xrightarrow{t_n} t_1t_2 \ldots t_n
    \]
    we aim to show that $P$ exists if and only if there exists a $\left(xIx^{\inv}\right)$-path
    \[
        P' : e \xrightarrow{xt_1x^{\inv}} xt_1x^{\inv} \xrightarrow{xt_2x^{\inv}} xt_1t_2x^{\inv} \xrightarrow{xt_3x^{\inv}} \cdots \xrightarrow{xt_nx^{\inv}} xt_1t_2\ldots t_nx^{\inv}.
    \]
    We do this by showing for a given $i \in [n]$, then we have
        $t_1 t_2 \ldots t_{i-1} \xrightarrow{t_i} t_1 t_2 \ldots t_{i-1} t_i$
        if and only if $xt_1 t_2 \ldots t_{i-1}x^{\inv} \xrightarrow{xt_ix^{\inv}} xt_1 t_2 \ldots t_{i-1}t_i x^{\inv}$.

    For ease of notation, let $w = t_1 \ldots t_{i-1}$.
    Recall that $xwx^{\inv} \xrightarrow{xt_ix^{\inv}} x w t_ix^{\inv} = (xwt_iw^{\inv} x^{\inv})(xwx^{\inv})$ if and only if $xw t_i w^{\inv} x^{\inv} \notin N(x w x^{\inv})$.
    Recalling that inversion sets are reflection cocycles we have
    \[
        x w t_i w^{\inv} x^{\inv} \notin \left( N(x) \symdiff xN(w)x^{\inv} \right) \symdiff x w N(x^{\inv}) w^{\inv} x^{\inv}.
    \]
    In other words, either $x w t_i w^{\inv} x^{\inv}$ is contained in exactly zero or two of the sets in the symmetric difference.

    By construction, since $x \in W^{I}$ then $x w t_i w^{\inv} x^{\inv} \notin N(x)$.
    Suppose contrarily that $xw t_i w^{\inv} x^{\inv} \in N(x)$ and note that $w t_i w^{\inv} \in W_I$.
    This implies $\ell(xw t_i w^{\inv}) = \ell(xw t_i w^{\inv} x^{\inv} x) < \ell(x)$ which is a contradiction to the fact that $x \in W^{I}$.

    Additionally, as $x \in W^{I}$ then $x w t_i w^{\inv} x^{\inv} \notin xwN(x^{\inv})w^{\inv} x^{\inv}$.
    Indeed, since $x \in W^{I}$ then $\ell(x^{\inv}) = \ell(x) < \ell(xt_i) = \ell(t_ix^{\inv})$ giving us $t_i \notin N(x^{\inv})$.
    
    Since $x w t_i w^{\inv} x^{\inv}$ is not contained in two of the sets, this implies it is contained in none of the sets and in particular $x w t_i w^{\inv} x^{\inv} \notin xN(w)x^{\inv}$.
    In other words $x w t_i w^{\inv} x^{\inv} \notin N(x w x^{\inv})$ if and only if $x w t_i w^{\inv} x^{\inv} \notin xN(w)x^{\inv}$.
    But this latter is true if and only if $w t_i w^{\inv} \notin N(w)$ or in other words if and only if there is an edge $w = t_1 t_2 \ldots t_{i-1} \xrightarrow{t_i} t_1 t_2 \ldots t_{i-1} t_i$.
\end{proof}

Letting $I$ contain two simple reflections gives us the dihedral Coxeter groups.
From \autoref{subsec:dihedral} we saw that for dihedral Coxeter groups we have a stronger result.
We can use this and our previous lemma to ``move'' this idea into an arbitrary part of our Coxeter group for $x \in W^I$.
\begin{lemma}
    \label{lem:dihedral_case}
    Let $(W, S)$ be a Coxeter system, $I = \left\{ s,t \right\} \subseteq S$ with $W_I$ finite and $x \in W^{I}$ be a minimal coset representative.
    Then
    \[
        N(x\longElI{I}) = \bclosure{N(xs) \cup N(xt)}.
    \]
\end{lemma}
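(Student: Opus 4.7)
The plan is to prove both inclusions separately by lifting \autoref{thm:join_closure_dihedral} from the parabolic $W_I$ to the ambient group via \autoref{lem:input_x}. For the easy inclusion $\bclosure{N(xs) \cup N(xt)} \subseteq N(x\longElI{I})$, I would check that $x\longElI{I}$ is a common weak-order upper bound of $xs$ and $xt$. Because $x \in W^I$, multiplying $x$ on the right by any element of $W_I$ is length-additive, so any reduced word for $\longElI{I}$ beginning with $s$ (respectively $t$) gives a reduced word for $x\longElI{I}$ with prefix $xs$ (respectively $xt$); hence $xs, xt \leqR x\longElI{I}$, and the inclusion is immediate from \autoref{lem:upper_bound_closure}.

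For the reverse inclusion I would compute both sides using the reflection cocycle formula $N(uv) = N(u) \symdiff u N(v) u^{\inv}$. Applying \autoref{thm:join_closure_dihedral} to the dihedral system $(W_I, I)$ (where by \autoref{lem:S_iff_long} one has $s \joinR t = \longElI{I}$, since $D_L(s) \cup D_L(t) = I$) gives $N(\longElI{I}) = \bclosure{\{s,t\}} = \bclosure{I}$, and then \autoref{lem:input_x} yields $xN(\longElI{I})x^{\inv} = \bclosure{xIx^{\inv}}$. The other ingredient is that $x \in W^I$ forces $\ell(xr) > \ell(x)$ for every reflection $r$ of $W_I$, so $xrx^{\inv} \notin N(x)$; in particular $N(x)$ is disjoint from both $xN(\longElI{I})x^{\inv}$ and $\{xsx^{\inv}, xtx^{\inv}\}$. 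The cocycle formula then turns the relevant symmetric differences into disjoint unions:
\[
    N(x\longElI{I}) = N(x) \sqcup \bclosure{xIx^{\inv}}, \qquad N(xs) \cup N(xt) = N(x) \sqcup xIx^{\inv}.
\]
The desired inclusion then follows by applying the two preclosure axioms to the right-hand side of the second identity: $N(x) \subseteq \bclosure{N(x) \cup xIx^{\inv}}$ by the extensivity property, and $\bclosure{xIx^{\inv}} \subseteq \bclosure{N(x) \cup xIx^{\inv}}$ by monotonicity, so together they cover $N(x\longElI{I})$.

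I do not anticipate a serious obstacle; the whole argument is disjoint-union bookkeeping once one recognizes that the dihedral theorem inside $W_I$, combined with \autoref{lem:input_x}, expresses both sides transparently, and that the condition $x \in W^I$ is exactly what forces the relevant symmetric differences to collapse to disjoint unions.
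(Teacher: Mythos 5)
Your proposal is correct and follows essentially the same route as the paper: the easy inclusion via \autoref{lem:upper_bound_closure}, and the reverse inclusion by the cocycle decomposition $N(x\longElI{I}) = N(x) \sqcup x\bclosure{I}x^{\inv}$, \autoref{lem:input_x}, and the disjointness forced by $x \in W^{I}$. The only cosmetic differences are that you invoke \autoref{thm:join_closure_dihedral} (via \autoref{lem:S_iff_long}) where the paper cites \autoref{prop:long_implies_closed} directly, and you unpack \autoref{lem:subset_inclusion_closure} into the two preclosure axioms.
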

\begin{proof}
    First notice that $x\longElI{I} = xs \joinR xt$ by construction.
    By \autoref{lem:upper_bound_closure} we have $\bclosure{N(xs) \cup N(xt)} \subseteq N(x\longElI{I})$.
    
    For the other direction, notice that
    \begin{align*}
        N(x\longElI{I}) &= N(x) \sqcup xN(\longElI{I}) x^{\inv}\\
        &= N(x) \sqcup x\bclosure{I} x^{\inv}\\
        &= N(x) \sqcup \bclosure{xIx^{\inv}}\\
        &\subseteq \bclosure{N(x) \sqcup xIx^{\inv}}\\
        &\subseteq \bclosure{N(xs) \cup N(xt)}
    \end{align*}
    Where the first equality is due to the reflection cocycle property and the fact that $N(x) \cap xN(\longElI{I}) x^{\inv} = \emptyset$, second equality by \autoref{prop:long_implies_closed}, the third equality by \autoref{lem:input_x}, the first subset by \autoref{lem:subset_inclusion_closure}, and the second subset by the reflection cocycle property.
\end{proof}
The idea of the above lemma is that if we have two elements $u$ and $v$ which cover another element $x$ then Dyer's conjecture is true.
This can be weakened so that $u$ and $v$ don't necessarily need to cover $x$, but instead can be weakly above $x$ (so that $x = u \meetR v$).
In the next lemma we keep $x \coverR v$, but we allow $u$ to be weakly above $x$.
Recall that $\bbclosure{A} = \cup_{i \in \N} \bnclosure{A}{i}$ where $\bnclosure{A}{i}$ is shorthand notation for applying the Bruhat preclosure $i$ times on $A$.
The following figure will be useful in the next lemma and the theorem.
\begin{center}
    \begin{figure}[H]
        \begin{tikzpicture}
            [vertex/.style={inner sep=1pt,draw=black, circle, fill=black, thick}]
            \coordinate (x) at (0,0);
            \coordinate (u) at (-1,1);
            \coordinate (xs) at (1,1);
            \coordinate (j) at (0,2);
            \coordinate (v) at (2,2);
            \coordinate (w) at (1,3);

            \draw (x) -- (u);
            \draw (x) -- (xs);
            \draw (xs) -- (v);
            \draw (v) -- (w);
            \draw (u) -- (j);
            \draw (j) -- (w);
            \draw[dotted] (xs) -- (j);

            \node[vertex] at (x) {};
            \node[vertex] at (u) {};
            \node[vertex] at (xs) {};
            \node[vertex] at (j) {};
            \node[vertex] at (v) {};
            \node[vertex] at (w) {};

            \node[below] at (x) {$x = u \meetR v$};
            \node[left] at (u) {$u$};
            \node[right] at (xs) {$xs$};
            \node[left] at (j) {$u \joinR xs$};
            \node[right] at (v) {$v$};
            \node[above] at (w) {$w$};
        \end{tikzpicture}
        \label{fig:lemmas}
        \caption{Poset structure when $u$ and $v$ are bounded above by $w$.}
    \end{figure}
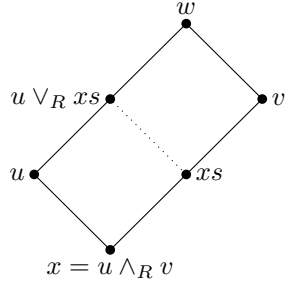
\end{center}

\begin{lemma}\label{lem:proof-one-cover}
    Let $(W, S)$ be a Coxeter system.
    For $u$ and $v$ in $W$ bounded above by an element $w \in W$ such that $x = u \meetR v$, $v = xs$ for some $s \in S$, $\ell(u) \geq \ell(v)$ and $v \not\leqR u$.
    Then there is a $t \in S \bs \{s\}$ such that
    \[
        \bbclosure{N(u) \cup N(v)} = \bbclosure{N(u) \cup N(x\longElI{\left\{s, t\right\}})}
    \]
    and
    \[
        u \joinR v = u \joinR x\longElI{\left\{s, t\right\}}.
    \] 
\end{lemma}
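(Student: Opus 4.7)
The plan is to construct $t$ explicitly as any left descent of $y := x^{\inv} u$, and then verify both equalities by reducing them to \autoref{lem:dihedral_case}. Write $u = xy$ as a reduced product (possible since $x \leqR u$). From $\ell(u) \geq \ell(v) = \ell(x) + 1$ we get $\ell(y) \geq 1$, so $D_L(y) \neq \varnothing$. I would first observe that $s \notin D_L(y)$, since otherwise $s \leqR y$ would yield $v = xs \leqR xy = u$, contradicting $v \not\leqR u$. Consequently any $t \in D_L(y)$ automatically lies in $S \setminus \{s\}$, and I fix such a $t$.

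Before proving the two equalities, I would collect the following preliminaries. First, $x \in W^{\{s,t\}}$: by hypothesis $\ell(xs) > \ell(x)$, and $\ell(xt) > \ell(x)$ because a factorization $u = (xt)(ty)$ with $\ell(xt) < \ell(x)$ would violate $\ell(u) = \ell(x) + \ell(y)$. Second, writing $u \joinR v = xz$ with $\ell(u \joinR v) = \ell(x) + \ell(z)$, I would check $s, t \in D_L(z)$: the inclusion $v = xs \leqR xz$ forces $s \leqR z$, while $u = xy \leqR xz$ forces $y \leqR z$, so $t \leqR y \leqR z$. By the standard fact that two simple reflections appearing simultaneously as left descents of a common element generate a finite dihedral parabolic whose long element lies weakly below that element (via the left parabolic decomposition $z = z_{\{s,t\}} \cdot z^{\{s,t\}}$), I conclude $W_{\{s,t\}}$ is finite and $\longElI{\{s,t\}} \leqR z$. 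Consequently $x \longElI{\{s,t\}}$ is a reduced product, is weakly below $u \joinR v = xz$, and is weakly above $v = xs$.

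The join identity follows immediately: $u \joinR v$ is an upper bound of $u$ and $x\longElI{\{s,t\}}$, while any common upper bound of $u$ and $x \longElI{\{s,t\}}$ also dominates $xs = v$ and hence dominates $u \joinR v$. For the closure identity, the inclusion $\bbclosure{N(u) \cup N(v)} \subseteq \bbclosure{N(u) \cup N(x\longElI{\{s,t\}})}$ follows from monotonicity since $N(v) \subseteq N(x\longElI{\{s,t\}})$. For the reverse inclusion, the reflection cocycle decomposition $N(u) = N(x) \sqcup x N(y) x^{\inv}$ together with $t \in N(y)$ yields $xtx^{\inv} \in N(u)$, hence $N(xt) = N(x) \cup \{xtx^{\inv}\} \subseteq N(u)$. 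Applying \autoref{lem:dihedral_case} (whose hypotheses are precisely the preliminaries above) gives
\[
    N(x\longElI{\{s,t\}}) = \bclosure{N(xs) \cup N(xt)} \subseteq \bclosure{N(u) \cup N(v)} \subseteq \bbclosure{N(u) \cup N(v)},
\]
and idempotence of $\bbclosure{\cdot}$ (\autoref{cor:inf_is_closre}) lets me close $N(u) \cup N(x\longElI{\{s,t\}}) \subseteq \bbclosure{N(u) \cup N(v)}$ to obtain the desired inclusion.

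The main obstacle is the deduction $\longElI{\{s,t\}} \leqR z$ from $s, t \in D_L(z)$, which hinges on the left parabolic decomposition and the observation that $W_{\{s,t\}}$ must be finite whenever both $s$ and $t$ are descents of a common element; once that is in hand, everything else reduces to a fairly direct bookkeeping exercise combining the reflection cocycle formula, \autoref{lem:dihedral_case}, and the closure properties of $\bbclosure{\cdot}$.
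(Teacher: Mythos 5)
Your proof is correct and follows essentially the same route as the paper: the $t$ you take (a left descent of $x^{\inv}u$ distinct from $s$) is exactly the paper's choice of $t$ with $x \coverR xt \leqR u$, and both arguments then reduce to \autoref{lem:dihedral_case} together with monotonicity and idempotence of the infinite Bruhat closure. The only difference is cosmetic: you verify explicitly the side conditions ($W_{\{s,t\}}$ finite, $x \in W^{\{s,t\}}$, and $x\longElI{\{s,t\}} \leqR u \joinR v$) that the paper leaves implicit, and you get the join identity by a sandwich argument rather than via $u \joinR v = u \joinR xt \joinR xs = u \joinR x\longElI{\{s,t\}}$.
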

\begin{proof}
    By construction, since $v = xs \not\leqR u$ and $\ell(u) \geq \ell(v)$ then there exists a $t \in S \bs \{s\}$ such that $x \coverR xt \leqR u$.
    Then by \autoref{lem:dihedral_case} we have $\bclosure{N(xs) \cup N(xt)} = N(x\longElI{\left\{s, t\right\}})$ implying
    \[
        N(u) \cup N(xs)  \subseteq N(u) \cup \bclosure{N(xt) \cup N(xs)} = N(u) \cup N(x\longElI{\left\{s, t\right\}})
    \]
    as $N(xt) \subseteq N(u)$.
    Furthermore,
    \[
        N(u) \cup N(x\longElI{\left\{s, t\right\}}) = N(u) \cup \bclosure{N(xt) \cup N(xs)} \subseteq \bclosure{N(u) \cup N(xs)}
    \]
    by \autoref{lem:subset_inclusion_closure}.
    Finally, \autoref{lem:closure_is_comm} and \autoref{cor:inf_is_closre},
    \[
        \bbclosure{N(u) \cup N(xs)} \subseteq \bbclosure{N(u) \cup N(x\longElI{\left\{s, t\right\}})} \subseteq \bbclosure{\bclosure{N(u) \cup N(xs)}} = \bbclosure{N(u) \cup N(xs)}
    \]
    and
    $u \joinR v = u \joinR xt \joinR xs = u \joinR x\longElI{\left\{s, t\right\}}$ as desired.
\end{proof}

Note that for finite Coxeter groups, since $T$ is finite, we have the following corollary.
\begin{corollary}
    \label{cor:proof-one-cover-finite}
    Let $(W, S)$ be a finite Coxeter system such that $x = u \meetR v$, $v = xs$ for some $s \in S$, $\ell(u) \geqR \ell(v)$ and $v \not\leqR u$.
    Then there is a $t \in S \bs \{s\}$ such that
    \[
        \bnclosure{N(u) \cup N(v)}{i} = \bnclosure{N(u) \cup N(x\longElI{\left\{s, t\right\}})}{i}
    \]
    where $i = \order{T} - \order{N(u) \cup N(v)}$.
\end{corollary}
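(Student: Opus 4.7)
The plan is to derive the corollary from \autoref{lem:proof-one-cover} by exploiting the finiteness of $T$ to trade the infinite Bruhat closure $\bbclosure{\cdot}$ for a sufficiently large finite iterate $\bnclosure{\cdot}{i}$.

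First I would establish a general stabilization fact: whenever $|T| < \infty$ and $A \subseteq T$, the ascending chain
\[
    A \subseteq \bclosure{A} \subseteq \bnclosure{A}{2} \subseteq \cdots \subseteq T
\]
must terminate after at most $|T| - |A|$ steps, since each strict inclusion in the chain strictly increases cardinality. Combined with the definition $\bbclosure{A} = \bigcup_{n \in \N} \bnclosure{A}{n}$ (which by \autoref{cor:inf_is_closre} is a closure operator), this shows $\bnclosure{A}{|T| - |A|} = \bbclosure{A}$.

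Next I would apply this bound simultaneously to both sides. Set $A = N(u) \cup N(v)$ and $B = N(u) \cup N(x\longElI{\{s,t\}})$, and note that $i = |T| - |A|$ by hypothesis, so the left-hand side stabilizes at step $i$ by the previous observation. For the right-hand side I would verify $A \subseteq B$, which follows because $v = xs \leqR x\longElI{\{s,t\}}$ forces $N(v) \subseteq N(x\longElI{\{s,t\}})$; this gives $|B| \geq |A|$, so the chain on $B$ stabilizes in at most $|T| - |B| \leq i$ steps and therefore $\bnclosure{B}{i} = \bbclosure{B}$ as well.

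Finally I would invoke \autoref{lem:proof-one-cover}, whose hypotheses coincide with those of the corollary, to obtain $\bbclosure{A} = \bbclosure{B}$. Chaining the three equalities yields
\[
    \bnclosure{A}{i} = \bbclosure{A} = \bbclosure{B} = \bnclosure{B}{i},
\]
which is the desired statement. There is no serious obstacle; the only subtlety is that $i$ is defined from $|A|$ rather than $|B|$, so one must use the monotonicity $A \subseteq B$ to ensure the single threshold $i$ is simultaneously large enough to reach the stable value on both sides.
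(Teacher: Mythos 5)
Your proposal is correct and is essentially the paper's own argument: the paper's proof is exactly the cardinality bound that each application of the preclosure either stabilizes or adds at least one of the at most $\order{T} - \order{N(u) \cup N(v)}$ remaining reflections, so the iterates reach the infinite Bruhat closure by step $i$, after which \autoref{lem:proof-one-cover} gives the equality. Your write-up is simply more explicit than the paper's, in particular in using $N(u) \cup N(v) \subseteq N(u) \cup N(x\longElI{\{s,t\}})$ to justify that the single threshold $i$ also suffices on the right-hand side.
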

\begin{proof}
    As $(W, S)$ is finite, then in the worst case scenario, we only add one reflection each time we take the Bruhat preclosure.
    As there are $\order{T} - \order{N(u) \cup N(v)}$ reflections we can add, this is the maximum number of closures needed.
\end{proof}

We now give one final technical lemma before proving our main theorem.
\begin{lemma}
    \label{lem:one-jump-in-algo}
    For $a, b \in W$ let $x = a \join_R b$.
    If there exists $I = \left\{s, t\right\} \subseteq S$ (with $s$ and $t$ distinct) such that $x \coverR xt \leqR a$, $x \coverR xs \leqR b$ and $b \leq_R x\longElI{I}$
    then:
    \[
        a \join_R b = a \join_R x\longElI{I} \text{ and } \bbclosure{N(a) \cup N(b)} = \bbclosure{N(a) \cup N(x\longElI{I})}.
    \]
\end{lemma}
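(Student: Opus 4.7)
The plan is to reduce both equalities to the dihedral case handled by \autoref{lem:dihedral_case}, with $x$ playing the role of the minimal coset representative. The preliminary observation is that $x \in W^{I}$: indeed the cover conditions $x \coverR xs$ and $x \coverR xt$ force $\ell(x) < \ell(xs)$ and $\ell(x) < \ell(xt)$, so $x$ has no right descent in $I$. Using the standard identity $\ell(xw) = \ell(x) + \ell(w)$ for $x \in W^{I}$ and $w \in W_I$, left multiplication by $x$ is a weak-order embedding of $W_I$ onto $xW_I$, which in particular gives $xs \joinR xt = x\longElI{I}$ in $W$.

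For the join equality I would bound $a \joinR b$ from both sides. On one side, $xs \leqR b \leqR a \joinR b$ and $xt \leqR a \leqR a \joinR b$ imply that $a \joinR b$ is an upper bound of $\{xs, xt\}$, whence $x\longElI{I} = xs \joinR xt \leqR a \joinR b$, and therefore $a \joinR x\longElI{I} \leqR a \joinR b$. On the other side, the hypothesis $b \leqR x\longElI{I}$ combined with the trivial $a \leqR a$ gives $a \joinR b \leqR a \joinR x\longElI{I}$.

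For the closure equality, the forward inclusion is immediate: $b \leqR x\longElI{I}$ gives $N(b) \subseteq N(x\longElI{I})$, hence $N(a) \cup N(b) \subseteq N(a) \cup N(x\longElI{I})$, and monotonicity of $\bbclosure{\cdot}$ (inherited from the preclosure axioms) transfers this to $\bbclosure{N(a) \cup N(b)} \subseteq \bbclosure{N(a) \cup N(x\longElI{I})}$. The reverse inclusion uses \autoref{lem:dihedral_case} applied to $x \in W^{I}$, yielding
\[
    N(x\longElI{I}) = \bclosure{N(xs) \cup N(xt)}.
\]
Since $N(xs) \subseteq N(b)$ and $N(xt) \subseteq N(a)$, monotonicity of $\bclosure{\cdot}$ forces $N(x\longElI{I}) \subseteq \bclosure{N(a) \cup N(b)} \subseteq \bbclosure{N(a) \cup N(b)}$, so $N(a) \cup N(x\longElI{I}) \subseteq \bbclosure{N(a) \cup N(b)}$. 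Applying $\bbclosure{\cdot}$ and invoking its idempotence (\autoref{cor:inf_is_closre}, via \autoref{lem:closure_is_comm}) then yields $\bbclosure{N(a) \cup N(x\longElI{I})} \subseteq \bbclosure{N(a) \cup N(b)}$. The only real technical point is recognizing that the cover hypotheses automatically place $x$ in $W^{I}$ so that \autoref{lem:dihedral_case} applies; everything else is set-containment bookkeeping together with the fact that the infinite Bruhat closure is genuinely a closure.
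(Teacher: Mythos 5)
Your proof is correct and follows essentially the same route as the paper: reduce to \autoref{lem:dihedral_case} via $x \in W^{I}$ (which, as you note, follows from the two cover hypotheses), then use monotonicity and idempotence of $\bbclosure{\,\cdot\,}$ for the closure equality and $xs \joinR xt = x\longElI{I}$ together with $xt \leqR a$, $xs \leqR b$, $b \leqR x\longElI{I}$ for the join equality. The only difference is presentational (two inclusions versus the paper's single chain of containments), and you gloss the fact that $x\longElI{I}$ is the join of $xs$ and $xt$ in all of $W$ at the same level of detail as the paper does.
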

\begin{proof}
    It's clear that
    \[
        a \joinR b = a \joinR xs \joinR xt \joinR b = a \joinR x\longElI{I} \joinR b = a \joinR x\longElI{I}
    \]
    since $x \coverR xt \leqR a$, $x \coverR xs \leqR b$, $b \leq_R x\longElI{I}$ and $x \in W^I$.
    For the infinite Bruhat closure we have:
    \begin{align*}
        \bbclosure{N(a) \cup N(b)} &= \bbclosure{N(a) \cup N(xs) \cup N(xt) \cup N(b)}\\
            &\subseteq \bbclosure{N(a) \cup N(x\longElI{I}) \cup N(b)}\\ 
            &= \bbclosure{N(a) \cup N(x\longElI{I})}\\
            &= \bbclosure{N(a) \cup \bclosure{N(xs) \cup N(xt)}}\\
            &= \bbclosure{N(a) \cup N(xs) \cup N(xt)}\\
            &\subseteq \bbclosure{N(a) \cup N(b)}.
    \end{align*}
    Thus completing our proof.
\end{proof}

We are now ready to prove our main theorem.
\begin{reptheorem}{thm:main}
    For $u$ and $v$ in $W$ such that there is a $w \in W$ with $u \leqR w$ and $v \leqR w$, then
    \[
        N(u \joinR v) = \bbclosure{N(u) \cup N(v)}.
    \]
\end{reptheorem}
\begin{proof}
    Without loss of generality, we suppose that $\ell(v) \leq \ell(u)$.
    Note that if $v \leqR u$ then by \autoref{thm:Bruhat_is_closure} we're done:
    \[
        N(u \joinR v) = N(u) = \bclosure{N(u)} = \bclosure{N(u) \cup N(v)}.
    \] 
    Therefore, we assume that $u$ and $v$ are incomparable.

    We will next describe an algorithm to produce our desired results.
    Set $\msE = \{v\}$ to be an ordered set and set $a = u$.
    We iterate over the set $\msE$ until it is empty.

    \textbf{Iteration:} Let $b$ be the final element in $\msE$ and let $x = a \meetR b$.
    There exists $I = \left\{s, t\right\} \subseteq S$ such that $x \coverR xt \leqR a$ and $x \coverR xs \leqR b$.
    We have four cases to consider depending on if $a, b \leqR x\longElI{I}$.
    \begin{enumerate}[leftmargin=16pt]
        \item If neither $a$ nor $b$ are below $x\longElI{I}$ in the weak order, then by \autoref{lem:proof-one-cover} we have $\bbclosure{N(a) \cup N(xs)} = \bbclosure{N(a) \cup N(x\longElI{I})}$ and $a \joinR xs = a \joinR x\longElI{I}$.
                Therefore, we add $x\longElI{I}$ into the set $\msE$ as the final element and iterate. Our new $b$ will be $x\longElI{I}$ in the next iteration as it will now be the final element in $\msE$, but $a$ stays the same.
        \item If $b \leqR x\longElI{I}$ but $a \not\leqR x\longElI{I}$, then by \autoref{lem:one-jump-in-algo} we have $\bbclosure{N(a) \cup N(b)} = \bbclosure{N(a) \cup N(x\longElI{I})}$ and $a \joinR b = a \joinR x\longElI{I}$.
                Therefore, we add $x\longElI{I}$ into the set $\msE$ as the final element, remove $b$ from $\msE$ and iterate. Our new $b$ will be $x\longElI{I}$ in the next iteration as it will now be the final element in $\msE$, but $a$ stays the same.
        \item If $a \leqR x\longElI{I}$ but $b \not\leqR x\longElI{I}$, then by \autoref{lem:one-jump-in-algo} we have $\bbclosure{N(a) \cup N(b)} = \bbclosure{N(x\longElI{I}) \cup N(b)}$ and $a \joinR b = x\longElI{I} \joinR b$.
                Therefore, we replace $a$ with $x\longElI{I}$ and iterate. Our $b$ will stay the same as $b$ remains the final element in $\msE$.
        \item If $a, b \leqR x\longElI{I}$, then by \autoref{lem:one-jump-in-algo} we have $\bbclosure{N(a) \cup N(b)} = \bbclosure{N(x\longElI{I})}$ and $a \joinR b = x\longElI{I}$.
                Therefore, we replace $a$ with $x\longElI{I}$ and remove the final element (the $b$) from $\msE$. Our new $b$ will be whatever the final element of $\msE$ is, assuming there are elements remaining.
    \end{enumerate}
    
    Since $u$ and $v$ are bounded and since the interval $[e, w]_R$ is finite, this algorithm will eventually stop in the final case where $a$ is some element and $\msE$ is empty.
    Then by our chains of equality of joins, we have $a = u \joinR v$ and our chain of Bruhat preclosures gives us:
    \[
        \bbclosure{N(u) \cup N(v)} = \bbclosure{N(a)} = \bbclosure{N(u \joinR v)} = N(u \joinR v)
    \] 
    as desired.
\end{proof}

Note that the biggest hurdle in proving Dyer's conjecture is~\autoref{lem:proof-one-cover}.
Various tests through SageMath shows that we should be able to replace the infinite Bruhat closure with the Bruhat preclosure and we see no inherent reason why the Bruhat preclosure is not sufficient.
We conjecture that it should be possible to remove the $\infty$ and only need the Bruhat preclosure for \autoref{lem:proof-one-cover}.
Then our proof of \autoref{thm:main} will naturally give us the desired result of Dyer's conjecture.
If the Bruhat preclosure was a closure then this theorem would imply that Dyer's conjecture is true.
We use this fact to give a second proof that Dyer's conjecture is true for type $A$.

Further, note that in the finite case, by \autoref{cor:proof-one-cover-finite}, we have the following corollary.
\begin{corollary}
    For $u$ and $v$ in a finite Coxeter group $W$, then there is $i \in \N$ such that
    \[
        N(u \joinR v) = \bnclosure{N(u) \cup N(v)}{i}
    \]
    where $i \leq |T| - |N(u) \cup N(v)|$.
\end{corollary}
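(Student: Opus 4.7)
The plan is to combine \autoref{thm:main} with the observation that in a finite Coxeter group, the Bruhat preclosure cannot keep growing indefinitely because the ambient set $T$ is finite. The upper bound on the number of iterations then follows from counting how many new reflections can appear at each step.

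First, I would apply \autoref{thm:main} to obtain the identification $N(u \joinR v) = \bbclosure{N(u) \cup N(v)}$, so that the problem reduces to showing that the infinite Bruhat closure coincides with some finite iterate $\bnclosure{N(u) \cup N(v)}{i}$ with $i$ bounded as stated. Set $A = N(u) \cup N(v)$ for brevity.

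Next, I would observe that by the preclosure property we have a weakly increasing chain $A \subseteq \bnclosure{A}{1} \subseteq \bnclosure{A}{2} \subseteq \cdots \subseteq T$, and that this chain is eventually constant: indeed, since $W$ is finite so is $T$, and whenever $\bnclosure{A}{i} = \bnclosure{A}{i+1}$ the whole sequence stabilizes from that point on (applying $\bclosure{\,\cdot\,}$ to both sides). Moreover, any index at which the chain strictly grows contributes at least one new reflection of $T \setminus A$, and so the number of strict increases is at most $\lvert T \rvert - \lvert A \rvert$. Thus the chain stabilizes for some $i \leq \lvert T \rvert - \lvert N(u) \cup N(v) \rvert$, and from that point on $\bnclosure{A}{i} = \bnclosure{A}{j}$ for all $j \geq i$.

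Finally, I would conclude by noting that once the chain stabilizes at index $i$, we have
\[
    \bbclosure{A} = \bigcup_{j \in \N} \bnclosure{A}{j} = \bnclosure{A}{i},
\]
so combined with \autoref{thm:main} this gives $N(u \joinR v) = \bnclosure{N(u) \cup N(v)}{i}$ with $i \leq \lvert T \rvert - \lvert N(u) \cup N(v) \rvert$, as desired. There is no real obstacle here: the statement is essentially a finite‑cardinality refinement of \autoref{thm:main}, and the only mildly subtle point is making sure that stabilization of one step implies stabilization of the whole chain, which is immediate from applying the preclosure operator to an equality.
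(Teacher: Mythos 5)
Your proof is correct, but it reaches the corollary by a different (and more direct) route than the paper. The paper obtains the bound by pointing back to its algorithmic proof of \autoref{thm:main}: in the finite case it invokes \autoref{cor:proof-one-cover-finite}, the finite variant of \autoref{lem:proof-one-cover}, which bounds the number of preclosure applications needed at each step of the algorithm by the counting argument ``each application adds at least one new reflection, and only $\order{T} - \order{N(u) \cup N(v)}$ reflections can be added.'' You instead treat \autoref{thm:main} as a black box (note you are implicitly using that the weak order of a finite $W$ is a lattice, so $u \joinR v$ always exists) and then argue directly that the increasing chain $A \subseteq \bnclosure{A}{1} \subseteq \bnclosure{A}{2} \subseteq \cdots \subseteq T$ with $A = N(u) \cup N(v)$ stabilizes: one equality of consecutive terms propagates forward, each strict increase consumes a fresh element of $T \setminus A$, so the first stabilization index $i$ satisfies $i \leq \order{T} - \order{A}$ and $\bbclosure{A} = \bnclosure{A}{i}$. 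The underlying finiteness/counting principle is the same in both arguments, but your version has the advantage of being self-contained and of making the global bound transparent, whereas the paper's derivation leaves implicit how the per-step bounds from \autoref{cor:proof-one-cover-finite} accumulate across the iterations of the algorithm; the paper's route, on the other hand, records more precisely where in the algorithm the extra preclosure applications arise.
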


\section{Type A}
\label{sec:type-a-case}

\newcommand{\perm}[2]{\left(\begin{smallmatrix}#1\\#2\end{smallmatrix}\right)}

In this section we show that in type $A$, the Bruhat preclosure is a closure, this giving a second proof of the conjecture of Dyer for type $A$ Coxeter groups: $N(u \joinR v) = \bclosure{N(u) \cup N(v)}$.
We recall that the proof of Dyer's conjecture for type $A$ was proved independently by Biagioli and Perrone in \cite[Theorem 4.4]{Biagioli2025}.
Furthermore, we note that Biagioli and Perrone use the left Bruhat preclosure whereas we use the right Bruhat preclosure (they multiply on the left rather than on the right).

Recall that a presentation for a type $A_n$ Coxeter group is given by the following:
\[
    W = \genset{S}{ (s_i s_{i+1})^3 = e \text{ for } i \in [n-1],\, (s_i s_j)^2 = e \text{ for } \order{j - i} > 1,\text{ and } s_i^2 = e}.
\]
We use the symmetric group $S_{n+1}$ representation of the type $A_n$ Coxeter group where the simple reflections are given by adjacent transpositions $s_i = (i, i + 1)$ and reflections are given by transpositions $t = (a, b) \in T$.
Elements of $W$ are then represented as permutations of the numbers $1$ through $n+1$ which we denote using both cycle notation and two-line notation.
For example, in $S_3$, the permutation $w \in S_3$ in cycle notation $w = (1, 2, 3)$ is associated to the two-line notation $w = \perm{1&2&3}{2&3&1}$.
By abuse of notation, for a permutation $w \in S_n$, for $i \in [n]$ we let $w(i)$ represent the number in the second row and $i$-th column of the two-line notation for $w$, \ie in our example we have $w(3) = 1$.

The inversion set of a permutation $w$ is a pair $(i, j)$ such that $i < j$ and $w(i) > w(j)$.
This coincides with the inversion set defined in \autoref{ssec:coxeter-groups}
There are two ways we can apply reflections on permutations: on their positions or on their values.
Multiplying from left to right, \ie right multiplication, applies the reflection to the values whereas multiplying from right to left, \ie left multiplication, applies the reflection to the positions.
We see these definitions in the following example.

\begin{example}
    Let $(W, \left\{s, t\right\})$ be a type $A_2$ Coxeter group where $s = (1, 2)$ and $t = (2, 3)$ are the associated simple transpositions in $S_3$.
    Take $w$ to be the element $w = st = (1, 2) (2, 3) = (1, 3, 2)$ where the cycle notation is multiplied from left to right.
    Then $st$ in two-line notation is given by $\perm{1&2&3}{3&1&2}$ as $1 \to 3$, $2 \to 1$ and $3 \to 2$.
    Calculating the inversion in the standard way gives us $N(st) = \left\{s, sts\right\}$.
    Using the permutation definition of inversion set we have that since $1 < 2$ and $st(1) = 3 > 1 = st(2)$ then $(1, 2) = s$ is an inversion.
    Similarly, since $1 < 3$ and $st(1) = 3 > 2 = st(3)$ then $(1, 3) = sts$ is an inversion.
    Notice that the two inversion sets coincide!

    We now apply the transpositions from both directions to understand what happens when we apply transpositions.
    \begin{align*}
        \text{Left multiplication: }\perm{1&2&3}{1&2&3} \xrightarrow{(2, 3)} \perm{1&2&3}{1&3&2} \xrightarrow{(1, 2)} \perm{1&2&3}{3&1&2}\\
        \text{Right multiplication: }\perm{1&2&3}{1&2&3} \xrightarrow{(1, 2)} \perm{1&2&3}{2&1&3} \xrightarrow{(2, 3)} \perm{1&2&3}{3&1&2}
    \end{align*}
    In the first line, we perform the multiplication from right to left, implying we swap positions.
    In the second line, we perform the multiplication from left to right, implying we swap values.
    In this second case, notice that each operation also adds the appropriate transposition into our inversion set.
    From $\perm{1&2&3}{1&2&3}$ to $\perm{1&2&3}{2&1&3}$ we swapped the values $1$ and $2$ in positions $1$ and $2$ adding $(1, 2)$ into our set of transpositions.
    From $\perm{1&2&3}{2&1&3}$ to $\perm{1&2&3}{3&1&2}$ we swapped the values $2$ and $3$ in positions $1$ and $3$ adding $(1, 3)$ into our set of transpositions and keeping $(1, 2)$ in our set.
\end{example}

We note that the proof technique used here is similar to the proof technique used in the proof of \cite[Theorem 4.4]{Biagioli2025} and was developed independently of the results of Biagioli and Perrone.
We note that although the proof technique is similar, our results are a generalization of, and thus strictly stronger than, the results by Biagioli and Perrone.
The main difference is that they show the following is true when $A = N(u) \cup N(v)$ with $u, v \in W$ whereas we let $A$ be any arbitrary subset of $T$.
\begin{lemma}
    \label{lem:A-reflection-only-between}
    Let $W$ be a type $A_n$ Coxeter group and let $A \subseteq T$.
    If $r = (a, b) \in \bclosure{A}$ (with $a < b$) then
    \[
        r = (a, c_1) (c_1, c_2) \dots (c_{m-2}, c_{m-1})(c_{m-1}, b)(c_{m-2}, c_{m-1})\dots (c_1, c_2)(a, c_1)
    \]
    where $a < c_1 < \ldots < c_{m-1} < b$ and $(c_i, c_{i+1}) \in A$ for all $i$, such that
    \[
       e \xrightarrow{(a, c_1)} (a, c_1)
       \xrightarrow{(c_1, c_2)} (a, c_1)(c_1, c_2)
       \xrightarrow{(c_2, c_3)}
       \ldots
       \xrightarrow{(1, c_1)} r
    \] 
    is an $A$-path.
\end{lemma}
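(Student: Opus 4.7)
The plan is to prove the lemma by strong induction on $b - a$.

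For the base case $b - a = 1$, any $A$-path $e = w_0 \to w_1 \to \cdots \to w_n = (a, a+1)$ must have length $n = 1$, since only the identity has length strictly less than $\ell((a, a+1)) = 1$. Thus $(a, a+1) \in A$, and setting $m = 1$ (so the increasing sequence is simply $a < b$), the single edge $e \to (a, a+1)$ yields the required $A$-path.

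For the inductive step with $b - a \geq 2$, fix $r = (a, b) \in \bclosure{A}$ and a shortest $A$-path $e = w_0 \to \cdots \to w_n = r$. The final edge $w_{n-1} \xrightarrow{t_n} r$ is the focus: a length calculation excludes $t_n$ being disjoint from $\{a, b\}$ (otherwise $w_{n-1} = r t_n$ would be a product of two disjoint transpositions with $\ell(w_{n-1}) > \ell(r) = 2(b-a) - 1$, contradicting the edge condition). The case $t_n = (a, b)$ forces $w_{n-1} = e$ and hence $n = 1$, which reduces to the situation $r \in A$ handled as in the base case. Otherwise $t_n$ shares exactly one index with $\{a, b\}$, so without loss of generality $t_n = (c, b)$ for some $a < c < b$ (the symmetric case $t_n = (a, c)$ is handled analogously), and a direct computation shows $w_{n-1}$ is the 3-cycle $(a, c, b)$.

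The principal obstacle is that $w_{n-1}$ is not itself a reflection, so the inductive hypothesis cannot be applied directly. To overcome this, I will establish the auxiliary claim that $(a, c) \in \bclosure{A}$: noting the decomposition $(a, c, b) = (c, b)(a, c)$, the $A$-subpath realizing $w_{n-1}$ can be analyzed by tracking where value $b$ travels along the path (value $b$ begins at position $b$ in $w_0 = e$ and arrives at position $c$ in $w_{n-1}$, moving only when a transposition in $A$ of the form $(d, b)$ is applied); a secondary induction on the length of this subpath then lets us extract an $A$-path realizing the reflection $(a, c)$. Granting this auxiliary claim, the inductive hypothesis applied to $(a, c)$ (with $c - a < b - a$) produces an increasing sequence $a = c_0 < c_1 < \cdots < c_{m-1} < c_m = c$ with $(c_i, c_{i+1}) \in A$ together with a corresponding $A$-path of the described form. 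Setting $c_{m+1} := b$ extends this to $a < c_1 < \cdots < c_m < b$, with the final consecutive pair $(c_m, c_{m+1}) = (c, b) = t_n \in A$ by choice. Finally, the full $A$-path for $(a, b)$ is constructed by extending the inductive $A$-path with two additional steps—one ``up'' step via $(c, b)$ and one symmetric ``back'' step via $(c_{m-1}, c)$—after which the existing back phase completes the contraction down to $(a, b)$; a routine length calculation at each new edge confirms it lies in the Bruhat graph. The main technical burden falls on the auxiliary claim; while the idea of tracing value $b$'s trajectory is natural, rigorously extracting a sub-$A$-path requires careful case analysis of how each transposition in $A$ can be applied consistently with the length-increasing condition.
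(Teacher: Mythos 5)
Your inductive architecture reaches the right combinatorial picture, but its pivotal step --- the auxiliary claim that $(a,c)\in\bclosure{A}$ --- is left essentially unproved, and the mechanism you sketch for it (tracking where the value $b$ travels along the subpath to $w_{n-1}$) does not plausibly deliver it: the moves of value $b$ only record applications of transpositions of the form $(d,b)$, none of which involve the reflection $(a,c)$, and deleting or reordering the steps that move $b$ changes all subsequent partial products and destroys the length-increasing condition, so no sub-$A$-path for $(a,c)$ falls out of that bookkeeping. The claim itself is true, but the working argument tracks the \emph{value sitting in position $a$}: since $w_{n-1}$ fixes every value smaller than $a$, a minimal-mover argument shows no value below $a$ ever moves along a length-increasing path (the smallest mover could only move rightward and could never return home), hence positions $1,\dots,a-1$ always carry $1,\dots,a-1$, hence the entry in position $a$ can only increase, and its successive values give an increasing chain $a<c_1<\cdots<c$ whose consecutive links are exactly labels in $A$. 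Note that this argument, applied to the full path for $r=(a,b)$ itself, already yields the increasing chain $a<c_1<\cdots<c_{m-1}<b$ with all links in $A$, with no induction on $b-a$ and no analysis of the last edge --- so your induction is doing little work while its key step is missing. (Also, your reduction of the last label to $(c,b)$ or $(a,c)$ with $a<c<b$ silently discards labels sharing one index with $\{a,b\}$ whose other endpoint lies outside $(a,b)$; these are ruled out by the same easy length check you used in the disjoint case, but it needs saying.)

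Even granting the chain, the second assertion of the lemma --- that the palindromic word on the chain is an $A$-path --- is where the real content lies, and your proposal does not supply it. The inductive hypothesis guarantees increasing lengths along the partial products of the palindromic word for $(a,c)$; after inserting the two letters $(c,b)(c_{m-1},c)$ at the peak, \emph{every} partial product in the descent phase is a different group element, so the length increase at each of those old edges must be re-verified, not only ``at each new edge''. That verification is precisely the two-part inversion count the paper carries out explicitly (lengths $2c_i-2a-i$ along the ascent, exactly one inversion added at each descent step), and it is the bulk of the proof; deferring it as routine leaves the lemma unproved. For comparison, the paper avoids induction entirely: it passes to the reflection subgroup generated by the labels of an $A$-path to $r$, writes $r$ as a palindromic reduced word in that subgroup's canonical generators (which in type $A$ are the consecutive links of an increasing chain), uses invariance of support to conclude those links lie in $A$, and then performs the chain-to-path computation once. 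A corrected version of your approach would replace the auxiliary claim by the position-$a$ tracking argument above and then still needs that explicit computation.
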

\begin{proof}
    First, notice that since $r \in \bclosure{A}$ then $r = t_1 t_2 \ldots t_\ell$ where each $t_i \in A$ and each multiplication on the right, increases the length by definition of $\bclosure{A}$.
    Considering the Coxeter group generated by the $t_i$, since $r$ is a reflection, we can rewrite $r$ as the reduced expression (over the alphabet $\left\{t_i\right\}_{i \in [\ell]}$)
    \[
        r = (a, c_1) (c_1, c_2) \dots (c_{m-2}, c_{m-1})(c_{m-1}, b)(c_{m-2}, c_{m-1})\dots (c_1, c_2)(a, c_1).
    \]
    Recalling that the support of two reduced expressions of the same element are identical, we have that each $(c_{i-1}, c_i) \in A$.
    Although this word is a reduced expression over the alphabet $\left\{t_i\right\}_{i \in [\ell]}$, this does not guarantee that the length is always increasing after each right hand multiplication over $W$ itself.
    Therefore, it remains to show that $a < c_1 < \ldots < c_{m-1} < b$ implies that each (right) multiplication increases the length, \ie that
    \[
       e \xrightarrow{(a, c_1)} (a, c_1)
       \xrightarrow{(c_1, c_2)} (a, c_1)(c_1, c_2)
       \xrightarrow{(c_2, c_3)}
       \ldots
       \xrightarrow{(a, c_1)} r
    \] 
    is an $A$-path.
    We break this down into two parts.

    \textbf{First part:}
    \begin{align*}
        \perm
            {1 & \ldots & a & \ldots & c_1 &\ldots &  c_2 & \ldots & c_{i-1} &\ldots &  c_i &\ldots &  b &\ldots & n}
            {1 & \ldots & a & \ldots & c_1 &\ldots &  c_2 & \ldots & c_{i-1} &\ldots &  c_i &\ldots &  b &\ldots & n}
        &\xrightarrow{(a, c_1)}
        \perm
            {1 & \ldots & a & \ldots & c_1 &\ldots &  c_2 & \ldots & c_{i-1} &\ldots &  c_i &\ldots &  b &\ldots & n}
            {1 & \ldots & \re{c_1} & \ldots & \re{a} &\ldots &  c_2 & \ldots & c_{i-1} &\ldots &  c_i &\ldots &  b &\ldots & n}
        \\&\xrightarrow{(c_1, c_2)}
        \perm
            {1 & \ldots & a & \ldots & c_1 &\ldots &  c_2 & \ldots & c_{i-1} &\ldots &  c_i &\ldots &  b &\ldots & n}
            {1 & \ldots & \re{c_2} & \ldots & a &\ldots &  \re{c_1} & \ldots & c_{i-1} &\ldots &  c_i &\ldots &  b &\ldots & n}
        \\&\qquad\vdots
        \\&\xrightarrow{(c_{i-2}, c_{i-1})}
        \perm
            {1 & \ldots & a & \ldots & c_1 &\ldots &  c_2 & \ldots & c_{i-1} &\ldots &  c_i &\ldots &  b &\ldots & n}
            {1 & \ldots & \re{c_{i-1}} & \ldots & a &\ldots &  c_1 & \ldots & \re{c_{i-2}} &\ldots &  c_i &\ldots &  b &\ldots & n}
        \\&\xrightarrow{(c_{i-1}, c_{i})}
        \perm
            {1 & \ldots & a & \ldots & c_1 &\ldots &  c_2 & \ldots & c_{i-1} &\ldots &  c_i &\ldots &  b &\ldots & n}
            {1 & \ldots & \re{c_{i}} & \ldots & a &\ldots &  c_1 & \ldots & c_{i-2} &\ldots &  \re{c_{i-1}} &\ldots &  b &\ldots & n}
        \\&\qquad\vdots
        \\&\xrightarrow{(c_{m-1}, b)}
        \perm
            {1 & \ldots & a & \ldots & c_1 & \ldots &  c_2 & \ldots &  c_i     & \ldots & c_{m-1} & \ldots & b &\ldots & n}
            {1 & \ldots & \re{b} & \ldots & a   & \ldots &  c_1 & \ldots &  c_{i-1} & \ldots & c_{m-2} & \ldots & \re{c_{m-1}} &\ldots & n}.
    \end{align*}
    In the first step, we have $(c_1 - a) + (c_1 - a) = 2c_1 - 2a - 1$ inversions: $c_1 - a$ coming from $c_1$ being larger than every number in the interval $[a, c_1)$ and $c_1 - a$ coming from $a$ being smaller than every number in the interval $(a, c_1]$ and the $-1$ coming from counting $a < c_1$ twice.
    In each subsequent step we have $(c_i - a) + (c_i - c_{i-1}) + (c_{i-1} + c_{i-2}) + \ldots + (c_1 - a) - i$ inversions: $c_i$ being bigger than every number in $[a, c_i)$, $c_{i-1}$ being smaller than every number in the interval $(c_{i-1}, c_i]$, $c_{i-2}$ being smaller than every number in the interval $(c_{i-2}, c_{i-1}]$ etc. with each $c_k < c_i$ being counted twice, giving
    \[
        (c_i - a ) + (c_i - c_{i-1}) + (c_{i-1} + c_{i-2} ) + \ldots + (c_1 - a) - i = 2c_i - 2a - i > 2c_{i-1} -2a - i + 1.
    \] 
    In other words, our length is always increasing each time we multiply on the right.
    Therefore, at the end we have $2b - 2a - m$ inversions.

    \textbf{Second part:}
        \begin{align*}
        \perm
            {1 & \ldots & a & \ldots &  c_i     & \ldots & c_{i+1} & \ldots & c_{m-2} & \ldots & c_{m-1} & \ldots & b &\ldots & n}
            {1 & \ldots & b & \ldots &  c_{i-1} & \ldots & c_{i} & \ldots & c_{m-3} & \ldots & c_{m-2} & \ldots & c_{m-1} &\ldots & n}
        &\xrightarrow{(c_{m-1}, c_{m-2})}
        \perm
            {1 & \ldots & a & \ldots &  c_i     & \ldots & c_{i+1} & \ldots & c_{m-2} & \ldots & c_{m-1} & \ldots & b &\ldots & n}
            {1 & \ldots & b & \ldots &  c_{i-1} & \ldots & c_{i} & \ldots & c_{m-3} & \ldots & \re{c_{m-1}} & \ldots & \re{c_{m-2}} &\ldots & n}
        \\&\xrightarrow{(c_{m-2}, c_{m-3})}
        \perm
            {1 & \ldots & a & \ldots &  c_i     & \ldots & c_{i+1} & \ldots & c_{m-2} & \ldots & c_{m-1} & \ldots & b &\ldots & n}
            {1 & \ldots & b & \ldots &  c_{i-1} & \ldots & c_{i} & \ldots & \re{c_{m-2}} & \ldots & c_{m-1} & \ldots & \re{c_{m-3}} &\ldots & n}
        \\&\qquad\vdots
        \\&\xrightarrow{(c_{i}, c_{i+1})}
        \perm
            {1 & \ldots & a & \ldots &  c_i     & \ldots & c_{i+1} & \ldots & c_{m-2} &\ldots & c_{m-1} & \ldots & b &\ldots & n}
            {1 & \ldots & b & \ldots &  c_{i-1} & \ldots & \re{c_{i+1}} & \ldots & c_{m-2} &\ldots & c_{m-1} & \ldots & \re{c_{i}} &\ldots & n}
        \\&\xrightarrow{(c_{i-1}, c_{i})}
        \perm
            {1 & \ldots & a & \ldots &  c_i     & \ldots & c_{i+1} & \ldots & c_{m-2} & \ldots & c_{m-1} & \ldots & b &\ldots & n}
            {1 & \ldots & b & \ldots &  \re{c_{i}} & \ldots & c_{i+1} & \ldots & c_{m-2} & \ldots & c_{m-1} & \ldots & \re{c_{i-1}} &\ldots & n}
        \\&\qquad\vdots
        \\&\xrightarrow{(a, c_{1})}
        \perm
            {1 & \ldots & a & \ldots & c_1 & \ldots &  c_i     & \ldots & c_{i+1} & \ldots & c_{m-2} & \ldots & c_{m-1} & \ldots & b &\ldots & n}
            {1 & \ldots & b & \ldots &  \re{c_1} & \ldots & c_i & \ldots & c_{i+1} & \ldots & c_{m-2} & \ldots & c_{m-1} & \ldots & \re{a} &\ldots & n}
    \end{align*}

    In the first step, we add $(b - {c_{m-1}}) - (b - c_{m-1}) + 1 = 1$ inversions: adding $b - c_{m-1}$ inversions since $c_{m-2}$ is smaller than every number in the interval $[c_{m-1}, b)$, removing $b - {c_{m-1}}$ inversions since $c_{m-1}$ is larger than every number in the interval $(b, c_{m-1})$ and $+1$ since we add the inversion $(b, c_{m-1})$ and thus the length goes up.
    Notice that in each subsequent step, by a similar argument, we add $(c_{i} - {c_{i-1}}) - (c_{i} - c_{i-1}) + 1 = 1$ inversions.

    Thus, our length is always increasing, and we're left with length $2b - 2a - m + (m -1) = 2b - 2a - 1$ as we would expect.
    In other words, this reduced expression (in the alphabet $\{t_i\}$) gives an $A$-path for $r$.
\end{proof}

We are now in a place to prove our main theorem.
\begin{theorem}
    \label{thm:type_A_closure}
    Let $W$ be a type $A_n$ Coxeter group and let $A \subseteq T$.
    Then $\bclosure{A} = \bclosure{\bclosure{A}}$ and thus the Bruhat preclosure is a closure in type $A_n$.
\end{theorem}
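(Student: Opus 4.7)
The inclusion $\bclosure{A} \subseteq \bclosure{\bclosure{A}}$ is immediate from the preclosure property, so the work is to show $\bclosure{\bclosure{A}} \subseteq \bclosure{A}$. My plan is to leverage \autoref{lem:A-reflection-only-between} twice and then ``splice'' the two chains of intermediate indices into a single increasing chain living in $A$.

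First, I would take an arbitrary $r = (a,b) \in \bclosure{\bclosure{A}}$ with $a<b$ and apply \autoref{lem:A-reflection-only-between} to the set $\bclosure{A}$. This yields integers $a = c_0 < c_1 < \cdots < c_{m-1} < c_m = b$ with each $(c_{i-1},c_i) \in \bclosure{A}$. Next, for each $i \in [m]$ I would apply \autoref{lem:A-reflection-only-between} again, this time to the reflection $(c_{i-1},c_i) \in \bclosure{A}$ viewed with respect to the set $A$ itself. This produces, for each $i$, a strictly increasing chain
\[
c_{i-1} = d_{i,0} < d_{i,1} < \cdots < d_{i,m_i-1} < d_{i,m_i} = c_i
\]
whose consecutive transpositions $(d_{i,j-1},d_{i,j})$ all lie in $A$.

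Now I would concatenate these chains into one global sequence
\[
a = d_{1,0} < d_{1,1} < \cdots < d_{1,m_1} = d_{2,0} < d_{2,1} < \cdots < d_{m,m_m} = b,
\]
which is strictly increasing because each local chain is strictly increasing and the right endpoint $c_i$ of the $i$th chain equals the left endpoint of the $(i+1)$st. Every consecutive transposition in this big chain belongs to $A$. Because the statement of \autoref{lem:A-reflection-only-between} is really a biconditional in disguise (the last paragraph of its proof explicitly builds an $A$-path from $e$ to $r$ out of any strictly increasing sequence between $a$ and $b$ with consecutive transpositions in $A$), this spliced chain supplies an $A$-path from $e$ to $(a,b)$. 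Hence $r \in \bclosure{A}$, as desired.

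The only subtle point — and the step I expect to need the most care in the writeup — is invoking the ``reverse'' direction of \autoref{lem:A-reflection-only-between}: the lemma as stated produces intermediate indices from membership in $\bclosure{A}$, but its proof shows the converse constructively by exhibiting the explicit $A$-path through the two-part inversion count. I would either cite that construction directly or, for clarity, restate it as a short standalone observation before the main proof, so that the splicing argument cleanly concludes $r \in \bclosure{A}$. Once the closure property is established, Dyer's conjecture in type $A$ follows from \autoref{thm:main} together with the fact that $\bbclosure{A} = \bclosure{A}$.
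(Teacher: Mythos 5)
Your proposal is correct and is essentially the paper's own argument: the paper also applies \autoref{lem:A-reflection-only-between} twice and then refines the chain, phrasing your splicing step as replacing each ascending/descending occurrence of $t_i$ by the first/second half of its palindromic word, and it likewise relies on the constructive converse in the lemma's proof (``a similar argument as in the previous proof'') to get the final $A$-path. Your explicit flagging of that converse direction is a fair clarification, not a deviation.
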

\begin{proof}
    Let $r = (a, b)$ be an element in $\bclosure{\bclosure{A}} \bs \bclosure{A}$.
    Then by \autoref{lem:A-reflection-only-between}
    \begin{align*}
        r &= (a, c_1) (c_1, c_2) \dots (c_{m-2}, c_{m-1})(c_{m-1}, b)(c_{m-2}, c_{m-1})\dots (c_1, c_2)(a, c_1) \\
            &= t_1t_2 \ldots t_{m-1}t_{m}t_{m-1} \ldots t_2t_1
    \end{align*}
    where $a < c_1 < \ldots < c_{m-1} < b$, where length increases each time we multiply on the right by a reflection (as $r \in \bclosure{\bclosure{A}}$) and where the $t_i$ are contained in $\bclosure{A}$.
    As with $r$, if $t_i = (c_{i-1}, c_i) \in \bclosure{A} \bs A$ then $t_i$ must be equal to 
    \begin{align*}
        t_i &= (c_{i-1}, z_1)(z_1, z_2) \dots (z_{k-2}, z_{k-1})(z_{k-1}, c_i)(z_{k-2}, z_{k-1}) \dots (z_1, z_2)(c_{i-1}, z_1)\\
            &= t_1't_2'\ldots t_{k-1}'t_{k}'t_{k-1}'\ldots t_2't_1'
    \end{align*}
    by \autoref{lem:A-reflection-only-between} with each $t_j' \in A$ and $c_{i-1} < z_1 < \dots < z_{k-1} < c_i$.
    Notice that we can't substitute $t_1't_2'\ldots t_{k-1}'t_{k}'t_{k-1}'\ldots t_2't_1'$ for $t_i$ directly into the equation of $r$ as then sometimes the length will decrease.
    But instead, we can replace $t_i$ with either the first half or the second half accordingly
    \begin{gather*}
        t_{i-1}t_it_{i+1} = (c_{i-2}, c_{i-1})(c_{i-1}, c_i)(c_i, c_{i+1}) \\
            \to  \\
        (c_{i-2}, c_{i-1})(c_{i-1}, z_1)(z_1, z_2) \dots (z_{k-2}, z_{k-1})(z_{k-1}, c_i)(c_i, c_{i+1}) = t_{i-1}t_1't_2'\ldots t_{k}'t_{i+1}
    \end{gather*}
    and
    \begin{gather*}
        t_{i+1}t_it_{i-1} = (c_i, c_{i+1})(c_{i-1}, c_i)(c_{i-2}, c_{i-1}) \\
        \to  \\
        (c_i, c_{i+1})(z_{k-1}, c_i)(z_{k-2}, z_{k-1}) \dots (z_1, z_2)(c_{i-1}, z_1)(c_{i-2}, c_{i-1}) = t_{i+1}t_{k+1}'\ldots t_2't_1't_{i-1}.
    \end{gather*}
    Replacing each $t_i \in \bclosure{A} \bs A$ in this way and using a similar argument as in the previous proof, we have an $A$-path for $r$ and therefore $r \in \bclosure{A}$.
    Thus, $\bclosure{\bclosure{A}} = \bclosure{A}$ as desired.
\end{proof}

Thus, combining \autoref{thm:type_A_closure} and  \autoref{thm:main} we obtain that Dyer's conjecture is true in type $A$.
\begin{corollary}
    \label{cor:type_A_Dyer}
    Let $W$ be a type $A_n$ Coxeter group and let $u$ and $v$ be elements in $W$.
    Then
    \[
        N(u \joinR v) = \bclosure{N(u) \cup N(v)}.
    \] 
\end{corollary}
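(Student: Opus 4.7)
The plan is to combine the two main ingredients that have been established just before the statement: \autoref{thm:main}, which identifies $N(u \joinR v)$ with the \emph{infinite} Bruhat closure $\bbclosure{N(u) \cup N(v)}$ whenever the join exists, and \autoref{thm:type_A_closure}, which says that in type $A$ the Bruhat preclosure is already idempotent. The corollary should then reduce to essentially a one-line deduction.

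First I would observe that since $W$ is a type $A_n$ Coxeter group it is finite, hence $(W, \leqR)$ is a lattice and in particular $u \joinR v$ exists for every pair $u, v \in W$. This verifies the hypothesis of \autoref{thm:main}, giving
\[
    N(u \joinR v) = \bbclosure{N(u) \cup N(v)}
    = \bigcup_{i \in \N} \bnclosure{N(u) \cup N(v)}{i}.
\]
Next I would invoke \autoref{thm:type_A_closure}, which states that $\bclosure{\bclosure{A}} = \bclosure{A}$ for every $A \subseteq T$ in type $A$. A straightforward induction on $i$ then yields $\bnclosure{A}{i} = \bclosure{A}$ for every $i \geq 1$: the base case $i = 1$ is trivial, and the inductive step uses $\bnclosure{A}{i+1} = \bclosure{\bnclosure{A}{i}} = \bclosure{\bclosure{A}} = \bclosure{A}$. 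Taking the union over $i$ and noting that $\bnclosure{A}{0} = A \subseteq \bclosure{A}$ gives $\bbclosure{A} = \bclosure{A}$.

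Applying this with $A = N(u) \cup N(v)$ and chaining with the previous equality concludes
\[
    N(u \joinR v) = \bbclosure{N(u) \cup N(v)} = \bclosure{N(u) \cup N(v)},
\]
which is precisely Dyer's conjecture in type $A$.

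Since both \autoref{thm:main} and \autoref{thm:type_A_closure} are already at our disposal, there is no genuine obstacle in this final step; the entire difficulty has been absorbed into \autoref{thm:type_A_closure}, whose combinatorial proof relied on the explicit ``palindromic'' factorization of reflections given in \autoref{lem:A-reflection-only-between}. The only minor care needed here is to note that the join exists (handled by finiteness of type $A_n$) before applying \autoref{thm:main}.
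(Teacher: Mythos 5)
Your proposal is correct and follows exactly the paper's intended argument: verify the join exists (type $A_n$ is finite, so the weak order is a lattice), apply \autoref{thm:main}, and collapse $\bbclosure{N(u) \cup N(v)}$ to $\bclosure{N(u) \cup N(v)}$ via the idempotence established in \autoref{thm:type_A_closure}. The brief induction showing $\bnclosure{A}{i} = \bclosure{A}$ for all $i \geq 1$ is the same routine step the paper leaves implicit.
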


\bibliographystyle{abbrv}
\bibliography{paper}
\label{sec:biblio}

\end{document}